\documentclass[a4paper,10pt]{article}
\usepackage{graphicx}
\usepackage{rotating}

\usepackage[latin1]{inputenc}

\title{On a variant of random homogenization theory: convergence of the
  residual process and approximation of the homogenized coefficients}




\author{Fr\'ed\'eric Legoll and Florian Thomines
\\
{\footnotesize Laboratoire Navier, \'Ecole Nationale des Ponts et
  Chauss\'ees, Universit\'e Paris-Est, 6 et 8 avenue
  Blaise Pascal,}   
\\ 
{\footnotesize 77455 Marne-La-Vall\'ee Cedex 2, France}\\
{\footnotesize and
} 
\\
{\footnotesize INRIA Rocquencourt, MICMAC team-project, Domaine de
  Voluceau,
} \\ {\footnotesize 
B.P. 105, 78153 Le Chesnay Cedex, France}
\\
{\footnotesize \tt legoll@lami.enpc.fr, florian.thomines@enpc.fr} 
}
\date{\today}

\usepackage{amsfonts}
\usepackage{amsmath,amssymb,amsthm}

\usepackage[english]{babel}

\addtolength{\textwidth}{6cm}
\addtolength{\oddsidemargin}{-3.0cm}
\addtolength{\textheight}{2cm}
\addtolength{\topmargin}{-2cm}

\newcommand{\RR}{\mathbb{R}}
\newcommand{\NN}{\mathbb{N}}
\newcommand{\ZZ}{\mathbb{Z}}
\newcommand{\PP}{\mathbb{P}}
\newcommand{\EE}{\mathbb{E}}
\newcommand{\Id}{\text{Id}}
\newcommand{\Var}{\mathbb{V}\mbox{ar}}
\newcommand{\Cov}{\mathbb{C}\mbox{ov}}
\newcommand{\esssup}[1]{\mathop{\operatorname{Ess Sup}}_{#1}}
\newcommand{\essinf}[1]{\mathop{\operatorname{Ess Inf}}_{#1}}
\newcommand{\dps}{\displaystyle}
\newcommand{\eps}{\varepsilon}

\def\longrightharpoonup{\relbar\joinrel\rightharpoonup}

\newtheorem{theo}{Theorem}
\newtheorem{remark}[theo]{Remark}
\newtheorem{lemme}[theo]{Lemma}

\bibliographystyle{unsrt} 



\begin{document}
\maketitle

\begin{abstract}
We consider the variant of stochastic homogenization theory introduced
in~\cite{Blanc2006,Blanc2007}. The equation under consideration is a
standard linear elliptic equation in divergence form, where the highly
oscillatory coefficient is the composition of a periodic matrix with a
stochastic diffeomorphism. The homogenized limit of this problem has
been identified in~\cite{Blanc2006}.

We first establish, in the one-dimensional case, a convergence result
(with an explicit rate) on the 
residual process, defined as the difference between the solution to the
highly oscillatory problem and the solution to the homogenized problem. 

We next return to the multidimensional situation. As often in random
homogenization, the homogenized matrix is defined from a so-called
corrector function, which is the solution to a problem set on the
entire space. We describe and prove the almost sure convergence of an
approximation strategy based on truncated versions of the corrector
problem. 
\end{abstract}


\section{Introduction}







Homogenization theory for linear second-order elliptic equations with
highly oscillatory coefficients is a well developed topic. In the
periodic case, the homogenized problem is known, and convergence rates
of the oscillatory solution (denoted $u^\eps$) towards the homogenized
solution $u_\star$ have been
obtained. 

The situation is less clear in the random (say stationary ergodic)
setting. The convergence of $u^\eps(\cdot,\omega)$ to some deterministic
$u_\star$ is a classical result. However, rates of convergence are much more
difficult to obtain. A central difficulty in stochastic homogenization
is that the corrector problem, that needs to be solved to next compute
the homogenized matrix, is set on the entire space (in contrast with the
periodic case, where it is set on the periodic cell). This induces many
theoretical and practical difficulties. 

In what follows, we are interested in the problem
\begin{equation}
\label{eq:edp_intro}
-\mbox{div}\left[A\left(\frac{x}{\eps},\omega\right)
   \nabla u^\eps(x,\omega) \right] =f(x) \mbox{ in $\mathcal{D}$}, 
\qquad
u^\eps(\cdot,\omega) =0 \mbox{ on $\partial \mathcal{D}$},
\end{equation}
where the random matrix $A$ satisfies standard coercivity and
boundedness properties 
(and some structure assumptions that we detail below),
$\mathcal{D}$ is an open bounded set of $\RR^d$ and $f \in
L^2(\mathcal{D})$.

The analysis of the residual, that we define as the difference
between the oscillatory solution $u^\eps$ and the homogenized solution
$u_\star$, 
was first taken up in~\cite{bourgeat_residu}, and next complemented
in~\cite{bal_residu}. Both studies consider the equation
$\dps -\frac{d}{dx}\left[
  a\left(\frac{x}{\eps},\omega\right)\frac{d u^\eps}{dx}\right]=f(x)$ in
the one-dimensional setting,
where $\dps a\left(x,\omega\right)$ is a random
stationary process. The behavior, when $\eps \to 0$, of the residual
$u^\eps(x,\omega) - u_\star(x)$ turns out to depend on the asymptotic
behavior of the correlation function of the conductivity coefficient 
$\dps \eta(x) := \Cov(a(0,\cdot),
a(x,\cdot))$. In~\cite{bourgeat_residu}, the case of small correlation
lengths is studied, which amounts to assuming that $\dps \eta(x)
\sim_{x\to \infty} x^{-\alpha}$ with $\alpha >1$. The correlation
function is thus integrable. In that case, when $\eps \to 0$,
the random process $\dps \frac{u^\eps(x,\omega) -
  u_\star(x)}{\sqrt{\eps}}$ converges in distribution to a Gaussian
random process. The case of long
correlation lengths, namely when $\dps \eta(x) \sim_{x\to \infty}
x^{-\alpha}$ for some $0 < \alpha < 1$, is studied
in~\cite{bal_residu}, where it is shown that the random process
$\dps \frac{u^\eps(x,\omega) - u_\star(x)}{\eps^{\alpha/2}}$ converges in
distribution to a Gaussian random process (defined using a fractional
Brownian motion).
This result shows that the rate of convergence of $u^\eps$ to
$u_\star$ can be as slow as $\eps^{\alpha/2}$ for any $\alpha > 0$, without
any further assumptions on the stationary process $a$. 

Of course, in both works, the one-dimensional setting allows 
to get some analytical expression for the residual. In turn, the
analysis of the asymptotic behavior of the residual performed
in~\cite{bourgeat_residu,bal_residu} relies on this analytical
expression. 
In higher dimensions, the case of the equation $\dps -\Delta u^\eps
+ q\left(\frac{x}{\eps},\omega\right) u^\eps = f(x)$ has been studied
in~\cite{bal_correcteur_multiD}.

\medskip

Our first aim here is to study a similar question for a variant of the
classical stochastic homogenization theory.  
We consider in the sequel the following problem, which has been
introduced in~\cite{Blanc2006} and further studied in~\cite{Blanc2007}:
\begin{equation}
\label{PB:BLL_intro}
-\mbox{div}\left[A_{\rm
     per}\left(\phi^{-1}\left(\frac{x}{\eps},\omega\right)\right)
   \nabla u^\eps(x,\omega) \right] =f(x) \mbox{ in $\mathcal{D}$}, 
\qquad
u^\eps(\cdot,\omega) =0 \mbox{ on $\partial \mathcal{D}$},
\end{equation}
where $\phi$ is almost surely a diffeomorphism from $\RR^d$
to $\RR^d$ with some stationary properties and $A_{\rm per}$ is a
$\ZZ^d$-periodic matrix, that satisfies the classical coercivity and
boundedness properties (see precise assumptions in
Section~\ref{sec:bll_dD} below).
This model is appropriate
to represent a periodic, ideal material, that is randomly deformed
(think of fibers in a composite material that are placed at a random
position, rather than on a perfect, periodic lattice). 
In~\cite{Blanc2006}, it is shown 
that the solution $u^\eps(\cdot,\omega)$ to the above
problem 
converges
as $\eps$ goes to $0$ to $u_\star$, solution to some homogenized
problem (see Section~\ref{sec:homog_bll} below). 
In the sequel, we aim at obtaining the rate of convergence of $u^\eps$
to $u_\star$, in the one dimensional setting. We make below an assumption
on the random diffeomorphism which implies that our setting is close to
the one studied in~\cite{bourgeat_residu} (rather than that studied
in~\cite{bal_residu}). Under this assumption, we show that the random
process $\dps \frac{u^\eps(x,\omega) -
  u_\star(x)}{\sqrt{\eps}}$ converges in distribution to a Gaussian
random process that we completely characterize (see
Section~\ref{sec:main1}, Theorem~\ref{theo4}).

\bigskip

We next turn to a question of different a nature. 
As pointed out above, the
homogenized matrix $A^\star$ associated to~\eqref{PB:BLL_intro} depends
on the solution of the corrector problem, 
which is set on the {\em entire} space. Computing an approximation of
$A^\star$ is thus, in practice, a challenging question. A standard
strategy is to consider the corrector problem on a large, but bounded
domain $Q_N$, supplemented with (say periodic) boundary conditions. An
approximation of the {\em exact} corrector is thus computed, from which 
an approximate homogenized matrix $A^\star_N(\omega)$ is inferred. As
a by-product of working on a bounded domain, the approximate
homogenized matrix is random. 
In the classical random homogenization setting (that
is~\eqref{eq:edp_intro} where $A$ is a stationary matrix), the
convergence (and its rate) of 
$A^\star_N(\omega)$ to $A^\star$ has been studied
in~\cite{bourgeat_ihp}, using some previous approximation
results~\cite{yurinski}. It is shown there that 
$A_N^\star(\omega)$ almost surely converges to $A^\star$, and that 
$\EE \left[ \left|
    A_N^\star - A^\star \right|^2 \right]$ converges to 0 as
$N^{-\alpha}$, for some $\alpha > 0$ which implicitly depends on the
mixing properties of the random coefficient $A$ of the
equation~\eqref{eq:edp_intro}. It is expected 
that, depending on the properties of that random coefficient, $\alpha$
can be arbitrary small. 

In this work, we consider the above
variant~\eqref{PB:BLL_intro} of the classical random homogenization setting. 
We describe a strategy (originally introduced in~\cite{cras_ronan}) to
approximate $A^\star$ which is based, as in
the classical setting, on solving the corrector problems on bounded
domains $Q_N$. We prove here the convergence of this approach (see
Section~\ref{sec:pres_approx}, Theorem~\ref{conv:AN}). 

\medskip

Our article is articulated as follows. In Section~\ref{sec:homog_bll},
we present in details the variant of the classical random homogenization
introduced in~\cite{Blanc2006,Blanc2007}. We next present in
Section~\ref{sec:main} our two main results, first on the residual
process in dimension one (see Section~\ref{sec:main1} and
Theorem~\ref{theo4}), second on a practical approximation of the
homogenized matrix in dimension $d \geq 2$ (see Section~\ref{sec:pres_approx} and
Theorem~\ref{conv:AN}). The subsequent two sections are devoted to the
proof of Theorem~\ref{theo4}. The actual proof is performed in
Section~\ref{sec:asymp}, and needs some technical results which are
proved in Section~\ref{sec:tech}. Our final section,
Section~\ref{sec:proofAN}, collects the proof of Theorem~\ref{conv:AN}.

\section{A variant of the classical random homogenization}
\label{sec:homog_bll}

To begin with, we introduce the basic
setting of stochastic homogenization we will employ. We refer
to~\cite{engquist-souganidis} for a general, numerically oriented 
presentation, and to~\cite{blp,cd,jikov} for classical textbooks. 
We also refer to~\cite{Blanc2006,Blanc2007} for a 
presentation of our particular setting. 
Throughout this article, $(\Omega,\mathcal{F},\PP)$ is a
probability space and we denote by $\dps \EE(X) =
\int_\Omega X(\omega) d\PP(\omega)$ the expectation value of any
random variable $X\in L^1(\Omega, d\PP)$. For any fixed 
$d\in \NN^\star$ (the ambient physical dimension), we assume that
the group $(\ZZ^d,+)$ acts on 
$\Omega$. We denote by $(\tau_k)_{k\in \ZZ^d}$ this action, and
assume that it preserves the measure $\PP$, that is, for all
$k\in \ZZ^d$ and all $B \in {\cal F}$, $\PP(\tau_k B)
= \PP(B)$. We assume that the action $\tau$ is {\em ergodic}, that is,
if $B \in {\cal F}$ is such that $\tau_k B = B$ for any $k \in
\ZZ^d$, then $\PP(B) = 0$ or 1. 
In addition, we define the following notion of (discrete) stationarity
(see~\cite{Blanc2006,Blanc2007}): any  
$F \in L^1_{\rm loc}\left(\RR^d, L^1(\Omega)\right)$ is said to be
{\em stationary} if
\begin{equation}
\label{eq:stationnarite-disc}
\forall k \in \ZZ^d, \quad
F(x+k, \omega) = F(x,\tau_k\omega)
\quad
\text{almost everywhere and almost surely.}
\end{equation}
In this setting, the ergodic theorem~\cite{krengel,shiryaev,tempelman} can be
stated as follows: 
{\it Let $F\in L^\infty\left(\RR^d, L^1(\Omega)\right)$ be a stationary random
variable in the above sense. For $k = (k_1,k_2, \dots, k_d) \in \ZZ^d$,
we set $\dps |k|_\infty = \sup_{1\leq i \leq d} |k_i|$. Then
$$
\frac{1}{(2N+1)^d} \sum_{|k|_\infty\leq N} F(x,\tau_k\omega)
\mathop{\longrightarrow}_{N\rightarrow \infty}
\EE\left(F(x,\cdot)\right) \quad \mbox{in }L^\infty(\RR^d),
\mbox{ almost surely}.
$$
This implies that (denoting by $Q=(0,1)^d$ the unit cube in $\RR^d$)
$$
F\left(\frac x \eps ,\omega \right)
\mathop{\longrightharpoonup}_{\eps \rightarrow 0}^*
\EE\left(\int_Q F(x,\cdot)dx\right) \quad \mbox{in }L^\infty(\RR^d),
\mbox{ almost surely}.
$$
}

\subsection{Mathematical setting and homogenization result}
\label{sec:bll_dD}

As pointed out in the introduction, we consider in this article the following problem,
which has been introduced in~\cite{Blanc2006} and further studied
in~\cite{Blanc2007}: 
\begin{equation}
\label{PB:BLL}
-\mbox{div}\left[A_{\rm
     per}\left(\phi^{-1}\left(\frac{x}{\eps},\omega\right)\right)
   \nabla u^\eps(x,\omega) \right] =f(x) \mbox{ in $\mathcal{D}$}, 
\qquad
u^\eps(\cdot,\omega) =0 \mbox{ on $\partial \mathcal{D}$},
\end{equation}
where $\mathcal{D}$ is a bounded open set of $\RR^d$, $f \in
L^2(\mathcal{D})$, $\phi$ is almost surely a diffeomorphism from $\RR^d$
to $\RR^d$, and $A_{\rm per}$ is a $\ZZ^d$-periodic matrix, that
satisfies the classical coercivity and boundedness properties: there
exists $a^+ \geq a_- > 0$ such that 
\begin{equation}
\label{eq:hyp_a}
\forall \xi \in \RR^d, \ \ a_-|\xi|^2 \leq A_{\rm
  per}(x) \xi \cdot \xi \ \ \mbox{almost everywhere on $\RR^d$, and}
\ \ a^+ = \| A_{\rm per} \|_{L^\infty(\RR^d)} < \infty.
\end{equation}
In addition, we assume that the map $\phi(\cdot,\omega)$ satisfies 
\begin{eqnarray}
 \essinf{\omega \in \Omega, \, x \in \RR^d}\left(\det(\nabla \phi(x, \omega))\right)=\nu >0,\label{phi:cond1}\\
 \esssup{\omega \in \Omega, \, x \in \RR^d} \left| \nabla \phi(x, \omega) \right| = M < +\infty,\label{phi:cond2}\\
 \nabla \phi \mbox{ is stationary in the sense of~\eqref{eq:stationnarite-disc}.}\label{phi:cond3}
\end{eqnarray}
Assumptions~\eqref{phi:cond1} and~\eqref{phi:cond2} mean that $\phi$ is
a well-behaved diffeomorphism, uniformly in $\omega$. Note that
$A_{\rm per} \circ \phi^{-1}$ is in general not
stationary. The above setting is thus not a particular case of the
classical stationary setting. 

In~\cite{Blanc2006}, it is shown 
that, under the above conditions, $u^\eps(\cdot,\omega)$ converges
to $u_\star$ almost surely (strongly in 
$L^2({\cal D})$ and weakly in $H^1({\cal D})$) when $\eps$ goes to $0$,
where $u_\star$ 
is the solution to the homogenized problem 
\begin{equation}
\label{PB:homo}
 -\mbox{div}\left[A^\star \nabla u_\star(x) \right] = f(x) 
\mbox{ in $\mathcal{D}$}, 
\qquad
u_\star =0 \mbox{ on $\partial \mathcal{D}$}.
\end{equation}
In~\eqref{PB:homo}, the homogenized matrix coefficient $A^\star$ is
equal to 
\begin{equation}
\label{def:astar}
\forall 1 \leq i,j \leq d, \quad
A^\star_{ij} = \mbox{det}\left(\EE\left(\int_Q \nabla \phi(y,\cdot) dy
  \right)\right)^{-1}
\EE\left(\int_{\phi(Q,\cdot)} e_i^T 
A_{\rm per}\left(\phi^{-1}\left(y,\cdot\right)\right) 
\left( e_j + \nabla w_{e_j}(y,\cdot) \right) dy \right),
\end{equation}
where $Q=(0,1)^d$ and where, for all $p\in\RR^d$, $w_p$ solves the
following corrector problem:
\begin{equation}
\label{PB:correc}
 \left\{ 
\begin{array}{l}
\dps
 -\mbox{div}\left[A_{\rm per}\left(\phi^{-1}(y,\omega)\right)
   \left(p+\nabla w_p(y,\omega)\right) \right] = 0 \mbox{ in } \RR^d, 
\\
\dps w_p(y,\omega) =
 \widetilde{w}_p(\phi^{-1}(y,\omega),\omega), 
\quad \nabla \widetilde{w}_p \mbox{ is stationary in the sense
  of~\eqref{eq:stationnarite-disc},}
\\ 
\dps \EE\left(\int_{\phi(Q,\cdot)} \nabla w_p(y,\cdot) dy \right)=0.
\end{array}
\right.
\end{equation}

\subsection{The one-dimensional case}

Our first main result, presented in Section~\ref{sec:main1}, is a
convergence result in the one-dimensional case. In that setting, it is
possible to write some explicit formulas.
Choosing $\mathcal{D}=(0,1)$, the
problems~\eqref{PB:BLL} and~\eqref{PB:homo} respectively read  
\begin{equation}
\label{PB:stoch}
 -\frac{d}{dx}\left[ a_{\rm
     per}\left(\phi^{-1}\left(\frac{x}{\eps},\omega\right)\right)
   \frac{d u^\eps}{dx}(x,\omega) \right] =f(x) \mbox{ in $(0,1)$}, 
\quad
u^\eps(0,\omega)=0, \quad u^\eps(1,\omega)=0,
\end{equation}
and
\begin{equation}
\label{PB:homo-1D}
 -\frac{d}{dx}\left(a^\star \frac{d u_\star}{dx}(x) \right) = f(x)
 \mbox{ in $(0,1)$}, 
\quad
u_\star(0)=0, \quad u_\star(1)=0.
\end{equation}
The corrector problem~\eqref{PB:correc}, that reads
\begin{equation}
\label{PB:correc-1D}
\left\{ 
\begin{array}{l}
\dps
 -\frac{d}{dy} \left[a_{\rm per}\left(\phi^{-1}(y,\omega)\right) 
\left(1+ \frac{dw}{dy}(y,\omega)\right) \right] = 0 \mbox{ in } \RR, 
\\ \noalign{\vskip 5pt}
\dps w(y,\omega) =
 \widetilde{w}(\phi^{-1}(y,\omega),\omega), \quad 
\frac{d\widetilde{w}}{dy}
\mbox{ is stationary in the sense of~\eqref{eq:stationnarite-disc},}\\ 
\dps \EE\left(\int_{\phi(Q,\cdot)} \frac{dw}{dy}(y,\cdot) dy \right)=0,
\end{array}
 \right.
\end{equation}
can be analytically solved. Its solution $w$ satisfies
\begin{equation}
\label{eq:corr}
1+\frac{dw}{dy}(y,\omega) = \frac{a^\star}{a_{\rm per}(\phi^{-1}(y,\omega))},
\end{equation}
where the homogenized coefficient $a^\star$ is given by
\begin{equation}
\label{def:astar-1d}
(a^\star)^{-1} = 
\frac{1}{\EE\left(\int_0^1 \phi'(y,\cdot)dy\right)}
\EE \left( \int_0^1 \frac{\phi'(y,\cdot)}{a_{\rm per}(y)} \, dy\right).
\end{equation}
As pointed out in~\cite{Blanc2006}, we observe on~\eqref{eq:corr}
that, in the one-dimensional case, the gradient of the corrector has the
same structure as the highly oscillatory coefficient
in~\eqref{PB:stoch}: it is equal to a periodic function composed with
$\phi^{-1}$. This is not the case in dimensions $d \geq 2$, as shown
in~\cite{Blanc2006}. 

\section{Main results}
\label{sec:main}

In this article, we show the following two main results,
Theorems~\ref{theo4} and~\ref{conv:AN}.

\subsection{Residual process in dimension one}
\label{sec:main1}

Our first aim is to characterize how the residual process
$u^\eps(x,\omega)-u_\star(x)$ converges to zero, where $u^\eps$
solves~\eqref{PB:stoch} and $u_\star$ solves~\eqref{PB:homo-1D}. 
To this aim, we make the following assumptions. Let us introduce the
$1$-periodic function 
\begin{equation}
\psi(x) = \frac{1}{a_{\rm per}(x)}-\frac{1}{a^\star}
\label{def:psi}
\end{equation}
and the random variables
\begin{equation}
\label{eq:def_Y}
Y_k(\omega) = \int_k^{k+1} \psi(t) \phi'(t,\omega) dt.
\end{equation} 
As $\psi$ is periodic and $\phi'$ is stationary, the random variables $Y_k$
are identically distributed. Due to~\eqref{def:astar-1d}, we have
$$
\EE(Y_0) = \EE\left(\int_0^1 \psi(t) \phi'(t,\cdot) dt\right) =0.
$$
We furthermore assume that the random variables $Y_k$ are
independent, and hence that
\begin{equation}
\label{eq:assump_Y_iid}
\text{the variables $Y_k$ are i.i.d.}
\end{equation}
Likewise, we consider the random variables
\begin{equation}
\label{eq:def_D}
D_k(\omega) = \int_k^{k+1} \phi'(t,\omega) dt,
\end{equation} 
which are identically distributed, and make the assumption that
\begin{equation}
\label{eq:assump_D_iid}
\text{the variables $D_k$ are i.i.d.}
\end{equation}

\begin{remark}
Suppose that the derivative of the random diffeomorphism $\phi$ reads
$$
\phi'(y,\omega) = 1 + \sum\limits_{k \in \ZZ} X_k(\omega) \ 
G_{\rm per}(y) \ \mathbf{1}_{[k,k+1)}(y), 
$$
where $X_k(\omega)$ are independent and identically distributed random
variables and $G_{\rm per}$ is a $1$-periodic bounded function, such
that, for some $0 < m < 1$,
$$
| X_0(\omega) | \leq m \ \text{almost surely and} \quad
\| G_{\rm per} \|_{L^\infty(\RR)} \leq m.
$$
Then, the conditions~\eqref{phi:cond1}, \eqref{phi:cond2} and~\eqref{phi:cond3}
are satisfied with $\nu = 1-m^2>0$ and $M=1+m^2$. By construction, the
assumptions~\eqref{eq:assump_Y_iid} and~\eqref{eq:assump_D_iid} are also
fullfilled.
\end{remark}

The first main result of this article is the following theorem, the
proof of which is postponed until Section~\ref{sec:pth4}.
\begin{theo}
\label{theo4}
Assume that $a_{\rm per}$ and $\phi$
satisfy~\eqref{eq:hyp_a},~\eqref{phi:cond1},~\eqref{phi:cond2}
and~\eqref{phi:cond3}. Assume furthermore the independence
conditions~\eqref{eq:assump_Y_iid} and~\eqref{eq:assump_D_iid}. 
We consider $u^\eps$ solution to~\eqref{PB:stoch}
and $u_\star$ solution to~\eqref{PB:homo-1D}. Then the
residual process converges in distribution to a Gaussian process,
$$
\frac{u^\eps(x,\omega)-u_\star(x)}{\sqrt{\eps}} 
\overset{\mathcal{L}}{\underset{\eps\to 0}{\longrightarrow}} 
G_0(x,\omega),
$$
where
\begin{equation}
\label{eq:def_G0}
G_0(x,\omega) = 
\frac{\sqrt{\Var(Y_0)}}{\sqrt{\EE\left(\int_0^1\phi'\right)}} 
\int_0^1 K_0(x,t) \, dW_t,
\end{equation}
where $W_t$ denotes the classical Brownian motion and $K_0(x,t)$ is
given by
\begin{equation}
K_0(x,t)=\left(\mathbf{1}_{[0,x]}(t)-x \right)\left(\int_0^1
  F(s) ds - F(t) \right) 
\quad \text{with} \quad F(t) = \int_0^t f(s) \, ds.
\label{def:K0}
\end{equation}
\end{theo}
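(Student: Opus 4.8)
The plan is to obtain an explicit formula for the residual $u^\eps(x,\omega)-u_\star(x)$ in terms of the data $f$, the periodic function $\psi$ from~\eqref{def:psi}, and the random diffeomorphism $\phi$, and then to recognize in that formula a Riemann-type sum of the i.i.d.\ variables $Y_k$ and $D_k$ to which a functional central limit theorem (Donsker's theorem) applies. Since we are in dimension one, the oscillatory equation~\eqref{PB:stoch} integrates explicitly: writing $A^\eps(x,\omega)=a_{\rm per}(\phi^{-1}(x/\eps,\omega))$, one has
\begin{equation}
\label{eq:plan_flux}
A^\eps(x,\omega)\,\frac{du^\eps}{dx}(x,\omega)=-F(x)+C^\eps(\omega),
\qquad F(x)=\int_0^x f,
\end{equation}
where the constant $C^\eps(\omega)$ is fixed by the boundary condition $u^\eps(1,\omega)=0$. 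Dividing by $A^\eps$, integrating once more, and using the homogenized counterpart $u_\star'(x)=(a^\star)^{-1}(-F(x)+\int_0^1 F)$, one finds after eliminating the constants that
\begin{equation}
\label{eq:plan_residu}
u^\eps(x,\omega)-u_\star(x)
=\int_0^x \big(-F(t)+C^\eps(\omega)\big)\Big(\tfrac{1}{A^\eps(t,\omega)}-\tfrac{1}{a^\star}\Big)\,dt
\;-\;x\int_0^1 \big(-F(t)+C^\eps(\omega)\big)\Big(\tfrac{1}{A^\eps(t,\omega)}-\tfrac{1}{a^\star}\Big)\,dt,
\end{equation}
plus lower-order terms coming from the difference $C^\eps(\omega)-\int_0^1 F$, which itself is $O(\sqrt\eps)$ times a bounded quantity. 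The first step is therefore to make~\eqref{eq:plan_residu} precise and to control the remainder terms.

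The second step is to analyze the key oscillatory integrals $\dps\int_0^x g(t)\big(A^\eps(t,\omega)^{-1}-(a^\star)^{-1}\big)\,dt$ for a smooth test function $g$ (here $g(t)=-F(t)$ or a constant). Using the change of variables $t=\eps\phi(s,\omega)$ — legitimate because $\phi$ is a bi-Lipschitz diffeomorphism by~\eqref{phi:cond1}--\eqref{phi:cond2} — the integrand $A^\eps(t,\omega)^{-1}-(a^\star)^{-1}=a_{\rm per}(\phi^{-1}(t/\eps,\omega))^{-1}-(a^\star)^{-1}=\psi(\phi^{-1}(t/\eps,\omega))$ becomes, under the substitution, $\psi(s)\,\phi'(s,\omega)$, integrated against $\eps\, g(\eps\phi(s,\omega))$. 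Slicing the $s$-integral over unit intervals $[k,k+1)$ produces exactly the variables $Y_k(\omega)$ of~\eqref{eq:def_Y}, weighted by slowly varying samples of $g$, while the change of variables relating the $x$-endpoint to an $s$-endpoint involves partial sums of the $D_k(\omega)$ of~\eqref{eq:def_D} (the ergodic theorem giving $\tfrac1N\sum_{k<N}D_k\to\EE(\int_0^1\phi')$, which is why this constant appears in~\eqref{eq:def_G0}). After rescaling by $\sqrt\eps$, we are left with a sum of order $\eps^{-1}$ many centered i.i.d.\ terms $Y_k$ normalized by $\eps^{1/2}\sim(\text{number of terms})^{-1/2}$, tested against a smooth function: this is precisely the setting of Donsker's invariance principle, giving convergence in law (in $C([0,1])$, or in a suitable function space for the process in $x$) to a Wiener integral $\int_0^1(\cdots)\,dW_t$.

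The third step is bookkeeping: one must pair the two contributions in~\eqref{eq:plan_residu}, identify the limiting kernel, and check it equals $K_0(x,t)$ from~\eqref{def:K0}. The factor $\mathbf{1}_{[0,x]}(t)-x$ comes from the combination "integral from $0$ to $x$" minus "$x$ times integral from $0$ to $1$", and the factor $\int_0^1 F(s)\,ds-F(t)$ is the surviving part of the weight $-F(t)+C^\eps$ after the constant $C^\eps\to\int_0^1 F$ is absorbed and one subtracts off the deterministic mean (the true cancellation uses $\EE(Y_0)=0$, which forces the limit to involve only $F(t)$ modulo its average). The variance prefactor $\Var(Y_0)$ and the normalization $\EE(\int_0^1\phi')$ then fall out of the central limit theorem and the ergodic rescaling, respectively.

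The main obstacle, I expect, is not the algebra of~\eqref{eq:plan_residu} but making the passage to the limit rigorous as a statement about \emph{processes} indexed by $x\in[0,1]$, rather than for each fixed $x$. One must establish tightness of $\eps^{-1/2}(u^\eps(\cdot,\omega)-u_\star(\cdot))$ in $C([0,1])$ (or $H^1_0$), which requires uniform moment bounds on increments in $x$ — these come from the boundedness of $f$, $F$, $1/a_{\rm per}$ and of $\phi',\,1/\phi'$, but have to be checked carefully — and then identify all finite-dimensional limit laws via the multidimensional Donsker theorem applied simultaneously to the relevant linear functionals of the $Y_k$. A secondary technical point is the error analysis in replacing the discrete partial sums $\sum_{k<N(\eps,x)}D_k$ by their ergodic limit inside the rescaled integrals, and in handling the $\eps$-dependence of the random constant $C^\eps(\omega)$; both should be controllable by $L^2(\Omega)$ estimates showing these perturbations are $o(1)$ after division by $\sqrt\eps$, but they are where the i.i.d.\ assumptions~\eqref{eq:assump_Y_iid}--\eqref{eq:assump_D_iid} (as opposed to mere stationarity) are genuinely used.
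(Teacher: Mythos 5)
Your plan follows essentially the same route as the paper: an explicit representation formula for the residual obtained by integrating the one-dimensional equation (the paper's Theorem~\ref{theo2}, with the constant $c^\eps(\omega)$ playing the role of your $C^\eps(\omega)$), reduction of the oscillatory integrals via the change of variables $s=\phi^{-1}(t/\eps,\omega)$ to normalized sums of the i.i.d.\ variables $Y_k$ with random endpoints controlled through partial sums of the $D_k$, a CLT-type argument for the finite-dimensional laws (the paper's Lemma~\ref{lemme2}, applied through Cram\'er--Wold), and a Kolmogorov moment criterion for tightness before invoking Billingsley's theorem. The only differences are in packaging: the paper uses a piecewise-constant approximation of the test function plus the scalar CLT and characteristic functions rather than invoking Donsker directly, and it treats the random-endpoint corrections by a maximal-inequality argument yielding convergence in probability (together with Young's-inequality moment bounds), since the plain $o(1)$ $L^2$ estimate you suggest is delicate precisely because the random number of terms is not independent of the $Y_k$.
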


\begin{remark}
It might be possible to weaken assumptions~\eqref{eq:assump_Y_iid}
and~\eqref{eq:assump_D_iid}, and to only assume that the identically
distributed variables $Y_k$ are such that
$\dps \sum_{k\in \ZZ} \left| \Cov(Y_0,Y_k) \right| <  +\infty$,
and likewise for $D_k$. We have however not pursued in that direction. 
\end{remark}

\subsection{Approximation of the homogenized matrix}
\label{sec:pres_approx}

In this section, we return to the multidimensional setting. To compute the
homogenized matrix $A^\star$ defined by~\eqref{def:astar}, we first need
to solve the corrector problem~\eqref{PB:correc}, which is set on the
entire space. In practice, approximations are therefore in order. 

In the sequel, we describe a strategy introduced
in~\cite{cras_ronan}, and that mimicks the approach proposed 
in~\cite{bourgeat_ihp} to approximate standard corrector problems in
classical random homogenization. In this article, we analyze
this approach and prove its convergence (see Theorem~\ref{conv:AN}
below). This is our second main result. We refer
to~\cite[Section~3.2]{singapour} for some illustrative numerical tests.

\medskip

\noindent
\textbf{Convention:} Following~\cite[Lemme~2.1]{Blanc2006}, we adopt the
convention that $\dps \left[ \nabla \phi \right]_{ij} = 
\frac{\partial \phi_i}{\partial x_j}$ for any $1 \leq i,j \leq
d$. Hence, for any scalar-valued 
function $\psi$, the gradient of $\widetilde{\psi} = \psi \circ \phi$ is
given by $\nabla \widetilde{\psi}(z) = 
(\nabla \phi(z))^T \nabla \psi (\phi(z))$. This convention implies that
$\left[ \nabla \phi(\phi^{-1}) \right] \, \nabla (\phi^{-1}) = \Id$.

\paragraph{Presentation of the approximation} 

The weak formulation of the corrector problem~\eqref{PB:correc} reads as
follows (see~\cite{Blanc2006}): for all $\widetilde{\psi}$ stationary in
the sense of~\eqref{eq:stationnarite-disc}, we have 
$$
\EE\left(\int_{\phi(Q,\cdot)} (\nabla \psi(y,\omega))^T 
A_{\rm per}\left(\phi^{-1}(y,\omega)\right)
\left(p+\nabla w_p(y,\omega)\right) dy\right) = 0,
$$
where $\psi=\widetilde{\psi}\circ\phi^{-1}$. The above expression can be
rewritten, after a change of variables, as
$$
\EE\left[\int_Q \det (\nabla \phi) \left( \nabla \widetilde{\psi} \right)^T 
(\nabla \phi)^{-1} A_{\rm per} 
\left(p + (\nabla \phi)^{-T} \nabla \widetilde{w}_p \right)\right] = 0.
$$
Since $\widetilde{\psi}$, $\nabla \phi$, $A_{\rm per}$ and $\nabla
\widetilde{w}_p$ are stationary in the sense
of~\eqref{eq:stationnarite-disc}, the ergodic theorem yields
$$
\lim\limits_{N\to\infty} \frac{1}{|Q_N|} \int_{Q_N}
\det (\nabla \phi) \left( \nabla \widetilde{\psi} \right)^T 
(\nabla \phi)^{-1} A_{\rm per} 
\left(p + (\nabla \phi)^{-T} \nabla \widetilde{w}_p\right) = 0
\quad \text{a.s.}
$$
where $Q_N = N Q$.
For a fixed $N$, we now define the approximate corrector
$\widetilde{w}_p^N$ as the $Q_N$-periodic function satisfying:
\begin{equation}
\label{PB:corrN-per}
\text{for all $\widetilde{\psi}$ $Q_N$-periodic,}
\quad
\int_{Q_N} \det (\nabla \phi) \left( \nabla \widetilde{\psi} \right)^T 
(\nabla \phi)^{-1} A_{\rm per} 
\left(p + (\nabla \phi)^{-T} \nabla \widetilde{w}^N_p\right)=0.
\end{equation}
Note that $\widetilde{w}_p^N$ is uniquely defined up to an additive constant. 

In turn, recall that $A^\star$ is defined by~\eqref{def:astar}. After a
change of variables, we infer from that equation that,
for any $1 \leq i,j \leq d$, we have
$$
A^\star_{ij} 
=
\mbox{det} 
\left(\EE\left(\int_Q \nabla \phi(y,\cdot) dy \right)\right)^{-1}
\EE\left(\int_Q \mbox{det}(\nabla \phi(y,\cdot)) \,
e_i^T A_{\rm per}\left(y\right) 
\left( e_j + (\nabla \phi)^{-T} \nabla
  \widetilde{w}_{e_j}(y,\cdot)\right) 
dy \right).
$$
The ergodic theorem yields
$$
A^\star_{ij}= \lim\limits_{N\to \infty} \left\{ 
\det \left(\frac{1}{|Q_N|}
\int_{Q_N} \nabla \phi(\cdot,\omega) \right)^{-1} 
\frac{1}{|Q_N|} \int_{Q_N} \det (\nabla \phi) e_i^T A_{\rm per} 
\left(e_j + (\nabla \phi)^{-T} \nabla \widetilde{w}_{e_j}\right) \right\}
\quad \text{a.s.}
$$
It is thus natural to approximate $A^\star$ by the matrix $A^\star_N(\omega)$
defined by
\begin{equation}
\label{def:AstarN}
A^\star_N(\omega) = \det \left(\frac{1}{|Q_N|}
\int_{Q_N} \nabla \phi(\cdot,\omega) \right)^{-1} 
B^\star_N(\omega),
\end{equation}
where the matrix $B^\star_N(\omega)$ is defined by, 
for any $1 \leq i,j \leq d$,
\begin{eqnarray}
\nonumber
\left[ B^\star_N \right]_{ij}(\omega)
&=& 
\frac{1}{|Q_N|} \int_{Q_N} \det (\nabla \phi) \, e_i^T A_{\rm per} 
\left(e_j + (\nabla \phi)^{-T} \nabla \widetilde{w}^N_{e_j}\right) 
\\
\label{eq:-1}
&=& \frac{1}{|Q_N|} \int_{\phi(Q_N,\omega)} e_i^T A_{\rm per}
\left(\phi^{-1}(y,\omega)\right) \left(e_j + \nabla w^N_{e_j}(y,\omega)
\right) \, dy,
\end{eqnarray}
where, for any $p \in \RR^d$, $\widetilde{w}^N_p$ is defined
by~\eqref{PB:corrN-per} and where
$$
w_p^N(y,\omega) = \widetilde{w}_p^N(\phi^{-1}(y,\omega),\omega).
$$ 
Note that, as is standard in stochastic homogenization, the
approximation $A^\star_N(\omega)$ is a {\em 
  random} matrix, even though the exact homogenized matrix $A^\star$ is
deterministic. This is a by-product of working on the {\em truncated}
domain $Q_N$ rather than $\RR^d$. 

\paragraph{Convergence of the approach} 

We prove in Section~\ref{sec:proofAN} below the following convergence result:
\begin{theo}
\label{conv:AN}
Let $\phi$ be a diffeomorphism
satisfying~\eqref{phi:cond1},~\eqref{phi:cond2} and~\eqref{phi:cond3},
and $A_{\rm per}$ be a periodic matrix that satisfies the ellipticity
condition~\eqref{eq:hyp_a}. 
Then the random matrix $A^\star_N(\omega)$
defined by~\eqref{def:AstarN} converges almost surely to the
deterministic homogenized matrix $A^\star$ defined by~\eqref{def:astar} when
$N \to \infty$. 
\end{theo}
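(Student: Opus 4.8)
The plan is to establish the almost sure convergence of $A^\star_N(\omega)$ to $A^\star$ by controlling the two factors in~\eqref{def:AstarN} separately. The determinant prefactor $\dps\det\left(\frac{1}{|Q_N|}\int_{Q_N}\nabla\phi(\cdot,\omega)\right)^{-1}$ converges almost surely to $\dps\det\left(\EE\left(\int_Q\nabla\phi(y,\cdot)dy\right)\right)^{-1}$ by direct application of the ergodic theorem (stated in the excerpt) to the stationary field $\nabla\phi$, together with~\eqref{phi:cond1}, which guarantees that the limiting determinant stays bounded away from zero so that inversion is continuous at the limit. The real work is therefore to show that $B^\star_N(\omega)$ defined by~\eqref{eq:-1} converges almost surely to the corresponding $\EE$-average appearing in~\eqref{def:astar}. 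By the convention recalled before~\eqref{PB:corrN-per}, the integrand of $B^\star_N$ in the $Q_N$ (unscaled) variables is $\det(\nabla\phi)\,e_i^TA_{\rm per}(e_j+(\nabla\phi)^{-T}\nabla\widetilde{w}^N_{e_j})$, so if I can prove that $\nabla\widetilde{w}^N_{e_j}$ is, in a suitable averaged sense, close to the exact stationary corrector gradient $\nabla\widetilde{w}_{e_j}$, the ergodic theorem finishes the job.

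First I would set up the functional-analytic framework: fix $p\in\RR^d$, work on the probability space together with the torus structure on $Q_N$, and view~\eqref{PB:corrN-per} as the Euler--Lagrange equation of a coercive quadratic functional over $Q_N$-periodic $H^1$ functions with zero mean. Writing $M_N(x,\omega)=\det(\nabla\phi)(\nabla\phi)^{-1}A_{\rm per}(\nabla\phi)^{-T}$, assumptions~\eqref{eq:hyp_a}, \eqref{phi:cond1} and~\eqref{phi:cond2} give uniform (in $N$ and $\omega$) ellipticity and boundedness of $M_N$, hence a uniform a priori bound $\dps\frac{1}{|Q_N|}\int_{Q_N}|\nabla\widetilde{w}^N_p|^2\le C$. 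This is the analogue of the classical estimate in~\cite{bourgeat_ihp}. The standard argument (going back to~\cite{yurinski} and used in~\cite{bourgeat_ihp}) is then to compare $\widetilde{w}^N_p$ with the exact corrector $\widetilde{w}_p$: testing the difference of the two equations against $\widetilde{w}^N_p-\widetilde{w}_p$ (made admissible by periodization/truncation near $\partial Q_N$), one controls $\dps\frac{1}{|Q_N|}\int_{Q_N}|\nabla\widetilde{w}^N_p-\nabla\widetilde{w}_p|^2$ by boundary-layer terms of size $o(1)$ as $N\to\infty$, almost surely, once the ergodic theorem is invoked to average out the stationary coefficients. An alternative, if one prefers to avoid an explicit comparison, is to show weak $L^2$ compactness of $\nabla\widetilde{w}^N_p$ along subsequences, identify every weak limit as a gradient of a stationary function solving~\eqref{PB:correc} via a div-curl / compensated-compactness argument, use uniqueness of the corrector gradient to conclude the full sequence converges, and finally upgrade to strong convergence using the energy identity; then pass to the limit in~\eqref{eq:-1} using the ergodic theorem on the (now strongly convergent) product.

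The main obstacle I anticipate is the passage to the limit in the product $\det(\nabla\phi)\,e_i^TA_{\rm per}\,(\nabla\phi)^{-T}\nabla\widetilde{w}^N_{e_j}$ when one only has weak convergence of $\nabla\widetilde{w}^N_{e_j}$: the oscillating coefficient $\det(\nabla\phi)\,(\nabla\phi)^{-1}A_{\rm per}$ does not converge strongly, so one cannot simply multiply limits. This is exactly where a div-curl lemma (the field $M_N(p+(\nabla\phi)^{-T}\nabla\widetilde{w}^N_p)$ is divergence-free by~\eqref{PB:corrN-per}, and $p+(\nabla\phi)^{-T}\nabla\widetilde{w}^N_p$ has controlled curl) must be combined with the ergodic theorem applied to the quadratic energy density, in the spirit of the proof of the homogenization result in~\cite{Blanc2006}. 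A secondary technical point is handling the boundary layer: $\widetilde{w}_p$ is stationary but not $Q_N$-periodic, so subtracting it from $\widetilde{w}^N_p$ requires a cut-off, and one must check that the resulting error terms are supported in a shell of volume $o(|Q_N|)$ and carry bounded energy density, so that the ergodic theorem again forces them to vanish in the $\frac{1}{|Q_N|}\int_{Q_N}$ average. Once these two ingredients are in place, combining the limit of $B^\star_N$ with the limit of the determinant prefactor yields $A^\star_N(\omega)\to A^\star$ almost surely, which is the claim.
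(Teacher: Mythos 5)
Your reduction of the determinant prefactor to the ergodic theorem is fine, but the central claim of your plan --- that $\nabla \widetilde{w}^N_p$ is close, in the averaged $L^2$ sense, to the exact stationary corrector gradient $\nabla \widetilde{w}_p$ --- is both stronger than what the theorem requires and not established by either of your two routes. For route (a), testing the difference of the two equations against a cut-off version of $\widetilde{w}^N_p-\widetilde{w}_p$ produces boundary-layer terms of the type $\int |\nabla\chi|\,|\widetilde{w}^N_p-\widetilde{w}_p|\,|\nabla(\cdot)|$; to make these $o(1)$ per unit volume you need sublinear (in $N$) bounds on $\widetilde{w}^N_p$ in the layer, whereas the only a priori information is the energy bound $\frac{1}{|Q_N|}\int_{Q_N}|\nabla \widetilde{w}^N_p|^2\leq C$, which via Poincar\'e gives $O(N)$, not $o(N)$, and the ergodic theorem cannot be invoked for the non-stationary, $N$-dependent field $\widetilde{w}^N_p$. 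This is precisely why the qualitative proofs (Bourgeat--Piatnitski, and the present paper) avoid any direct comparison of the two correctors. For route (b), the compactness one actually has is for the rescaled corrector $w^N_{0,p}=\frac{1}{N}w^N_p(N\cdot)$ on a fixed bounded set; its weak $H^1$ limit is a macroscopic function, not a stationary field, so ``identify every weak limit as a stationary solution of~\eqref{PB:correc}'' is not the correct statement --- what must be (and, in the paper, is) proved is that the limit has \emph{zero gradient}. Likewise, the last step ``pass to the limit in~\eqref{eq:-1} using the ergodic theorem on the product'' is not available: the integrand couples a stationary oscillating coefficient with the non-stationary $\nabla \widetilde{w}^N_{e_j}$, and the passage to the limit must go through weak convergence of the flux (div--curl / $G$-convergence), not through the ergodic theorem.

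For comparison, the paper proceeds by rescaling ($w^N_{0,p}$, weak $H^1$ limit $w^\infty_{0,p}$ on a fixed set $\widetilde{Q}(\omega)$), shows $w^\infty_{0,p}$ is $\alpha Q$-periodic with $\alpha=\EE\left(\int_Q\nabla\phi\right)$, and uses the div--curl lemma together with Theorem~\ref{Jik-conv} to get $-\mathrm{div}\left[\beta A^\star \nabla w^\infty_{0,p}(\alpha\cdot)\right]=0$, concluding $\nabla w^\infty_{0,p}\equiv 0$ from the coercivity of $\beta A^\star\alpha^{-T}$ (Lemma~\ref{lem:j_ball}, which rests on the null-Lagrangian identity $\beta=\det\alpha\,\alpha^{-1}$); the convergence of $B^\star_N$ then follows from the weak flux convergence and the $L^1$ convergence of the indicators of $\frac{1}{N}\phi(Q_N,\omega)$. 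None of the ingredients specific to this variant appear in your plan: the random moving domains $\frac{1}{N}\phi(Q_N,\omega)$ and the geometric Lemmas~\ref{theo:phiQN} and~\ref{lem:inclusion} that provide fixed domains on which to apply the div--curl lemma and the $G$-convergence theorem, the $\alpha Q$-periodicity of the limit, and the coercivity of $\beta A^\star\alpha^{-T}$. Even if you stay entirely in the undeformed variables with the stationary coefficient $\det(\nabla\phi)(\nabla\phi)^{-1}A_{\rm per}(\nabla\phi)^{-T}$, the problem~\eqref{PB:corrN-per} carries the forcing vector $\det(\nabla\phi)(\nabla\phi)^{-1}A_{\rm per}\,p$, which is not of the form ``coefficient times $p$'', so Theorem~\ref{Jik-conv} does not apply verbatim and an identification of the limiting flux plus a coercivity statement playing the role of Lemma~\ref{lem:j_ball} would still be needed. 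As it stands, the proposal has a genuine gap at its central step.
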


\section{Asymptotic behavior of the residual}
\label{sec:asymp}

The aim of this Section and of the next one is to prove our first main
result, Theorem~\ref{theo4}.
Using the one dimensional setting, we first establish a
``representation'' formula for the residual (see Section~\ref{sec:rep},
Theorem~\ref{theo2}). Using this formula, we are next in position to
study the asymptotic behavior of the residual when $\eps \to 0$ (see
Section~\ref{sec:pth4}). Section~\ref{sec:tech} collects the proofs of
some technical results used in Sections~\ref{sec:rep}
and~\ref{sec:pth4}. 

\subsection{Representation formulas}
\label{sec:rep}

The following technical result will be very useful in the sequel. Its
proof is postponed until Section~\ref{sec:tech1}.

\begin{lemme} 
\label{lemme1}
Assume that $a_{\rm per}$ and $\phi$
satisfy~\eqref{eq:hyp_a},~\eqref{phi:cond1},~\eqref{phi:cond2}
and~\eqref{phi:cond3}.
Assume furthermore the independence
conditions~\eqref{eq:assump_Y_iid} and~\eqref{eq:assump_D_iid}.
For any $0 \leq \alpha
\leq \beta \leq 1$, and any ${\cal A} \in
L^\infty(\alpha,\beta)$ with ${\cal A}' \in L^2(\alpha,\beta)$, define
the random variable
\begin{equation}
\label{eq:def_Z_bar}
\overline{Z}_\eps(\alpha,\beta,\omega) =
\frac{1}{\sqrt{\eps}} 
\int_\alpha^\beta
{\cal A}(t)
\psi\left(\phi^{-1}\left(\frac{t}{\eps},\omega\right)\right) dt, 
\end{equation}
where the function $\psi$ is defined by~\eqref{def:psi}. For any $p \in
\NN^\star$, there exists a
deterministic constant $C_p$ independent of ${\cal A}$, $\eps$, $\alpha$
and $\beta$, such that  
\begin{equation*}
\forall \eps>0, \quad
\EE \left[ \overline{Z}_\eps(\alpha,\beta,\cdot)^{2p} \right] 
\leq
C_p \left[ (\beta-\alpha)^p + \eps^{(p-1)/2} \right]
\left[ \| {\cal A} \|^{2p}_{L^\infty(\alpha,\beta)} + 
(\beta-\alpha)^p \, \| {\cal A}' \|^{2p}_{L^2(\alpha,\beta)} \right].
\end{equation*}
\end{lemme}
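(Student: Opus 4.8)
The plan is to estimate the $2p$-th moment of $\overline{Z}_\eps$ by decomposing the integration interval $(\alpha,\beta)$ according to the periodicity cells of $\psi\circ\phi^{-1}$. The key structural fact is that, although $\psi\circ\phi^{-1}(\cdot/\eps,\omega)$ is not stationary, the change of variables $y = \phi^{-1}(t/\eps,\omega)$ turns integrals of $\psi(\phi^{-1}(t/\eps,\omega))$ against a slowly varying weight into integrals of $\psi(y)\,\phi'(y,\omega)$, which are controlled by the i.i.d.\ variables $Y_k$ of~\eqref{eq:def_Y}. Concretely, after rescaling $t = \eps s$, one writes $\overline{Z}_\eps = \sqrt{\eps}\int_{\alpha/\eps}^{\beta/\eps} {\cal A}(\eps s)\,\psi(\phi^{-1}(s,\omega))\,ds$, and then one performs the substitution $z = \phi^{-1}(s,\omega)$, so that $ds = \phi'(z,\omega)\,dz$ (up to the conventions fixed in the paper) and the integrand becomes a product of the smooth weight ${\cal A}$ evaluated at a point close to $\eps\,\phi(z,\omega)$, times $\psi(z)\,\phi'(z,\omega)$.

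The main steps, in order, would be: (i) reduce, by the substitution above and the uniform bounds~\eqref{phi:cond1}--\eqref{phi:cond2} on $\phi$, to controlling sums $\sum_{k} c_k Y_k$ where the coefficients $c_k$ are averages of ${\cal A}$ over intervals of length $O(\eps)$ in the original variable, so that $|c_k| \lesssim \|{\cal A}\|_{L^\infty}$ and (by summation by parts, using ${\cal A}' \in L^2$) the increments satisfy $\sum_k |c_{k+1}-c_k|^2 \lesssim \eps\,(\beta-\alpha)\,\|{\cal A}'\|_{L^2}^2$; (ii) since the $Y_k$ are i.i.d.\ and centered, expand $\EE\big[(\sum_k c_k Y_k)^{2p}\big]$ and use that only terms in which every index appears at least twice survive, which gives a bound of the form $C_p\big(\sum_k c_k^2\,\Var(Y_0) \big)^p$ up to lower-order corrections — this is the standard Marcinkiewicz--Zygmund / Rosenthal type estimate for sums of independent centered variables with bounded moments (the $Y_k$ are bounded, hence have all moments); (iii) count: the number of cells is $O((\beta-\alpha)/\eps)$, each $c_k^2 = O(\eps^2 \|{\cal A}\|_{L^\infty}^2)$ after accounting for the prefactor $\sqrt{\eps}$, so $\sum_k c_k^2 = O(\eps(\beta-\alpha)\|{\cal A}\|_{L^\infty}^2)$, and the $\eps$-th powers combine to yield the claimed $(\beta-\alpha)^p$ term; (iv) track the remainder from replacing ${\cal A}(\eps s)$ by its cell-average and from the boundary cells, which is where the $\|{\cal A}'\|_{L^2}$ contribution and the lower-order $\eps^{(p-1)/2}$ term enter; the $\eps^{(p-1)/2}$ reflects the situation $(\beta-\alpha) \sim \eps$, i.e.\ $O(1)$ cells, where the bound degenerates to a single-cell estimate $\EE[Y_0^{2p}]$ times $\eps^p \cdot \eps^{-p+?}$ — one must check the bookkeeping so that the two regimes $(\beta-\alpha)\gtrsim\eps$ and $(\beta-\alpha)\lesssim\eps$ are both covered by $(\beta-\alpha)^p + \eps^{(p-1)/2}$.

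The main obstacle I expect is the combinatorics of step (ii) together with the uniformity claim: the constant $C_p$ must be independent of ${\cal A}$, $\eps$, $\alpha$, $\beta$, so one cannot afford any estimate whose constant secretly depends on the number of cells. This forces one to use a genuine Rosenthal-type inequality (with explicit dependence only on $p$ and on $\sup_k \EE[|Y_k|^{2p}]$, which is finite and uniform since $\psi\in L^\infty$ and $\phi'$ is bounded) rather than a crude expansion, and to be careful that the "non-i.i.d.\ remainder" coming from the fact that ${\cal A}$ varies within a cell and from the fractional boundary cells is handled by an independent, cruder bound — typically $L^\infty$ bounds on $\psi$ and $\phi'$ plus Cauchy--Schwarz for the ${\cal A}'$ term — and then combined via the triangle inequality in $L^{2p}(\Omega)$. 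A secondary technical point is making the change of variables $z=\phi^{-1}(s,\omega)$ rigorous when ${\cal A}$ is merely $L^\infty$ with $L^2$ derivative (so one works with an $H^1$ density of ${\cal A}$, or approximates), and keeping the dependence on $\beta-\alpha$ sharp in the $\|{\cal A}'\|_{L^2}$ term, where the extra factor $(\beta-\alpha)^p$ multiplying $\|{\cal A}'\|_{L^2}^{2p}$ comes from Poincaré-type control of ${\cal A}$ by its mean plus $(\beta-\alpha)^{1/2}\|{\cal A}'\|_{L^2}$ on the interval of length $\beta-\alpha$.
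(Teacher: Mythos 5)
Your overall direction (change of variables to expose the variables $Y_k$ of~\eqref{eq:def_Y}, then moment bounds for centered i.i.d.\ sums) is the right one, but there is a genuine gap at the heart of step (ii). After the substitution $z=\phi^{-1}(s,\omega)$, the integration range becomes $\bigl(\phi^{-1}(\alpha/\eps,\omega),\phi^{-1}(\beta/\eps,\omega)\bigr)$ and the weight becomes ${\cal A}(\eps\,\phi(z,\omega))$: both the number of cells entering the sum and your coefficients $c_k$ are \emph{random} and are \emph{not independent} of the $Y_k$ (the endpoint $\lfloor\phi^{-1}(x/\eps,\omega)\rfloor$ and the values $\phi(k,\omega)$ are built from the same process $\phi'$ as the $Y_k$). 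Consequently the Rosenthal/Marcinkiewicz--Zygmund inequality you invoke for $\sum_k c_k Y_k$ does not apply as stated, and this is precisely the difficulty the paper spends most of its effort on. In the paper the term $\eps^{(p-1)/2}$ does not come from the regime $\beta-\alpha\sim\eps$ or from a single boundary cell, as you conjecture; it comes from the random mismatch between $\phi^{-1}(x/\eps,\omega)$ and the deterministic proxy $x/(\eps\,\EE\int_0^1\phi')$, a boundary layer of order $\eps^{-1/2}$ cells (CLT-size fluctuations of $\phi$). Controlling it requires the second independence assumption~\eqref{eq:assump_D_iid} on the variables $D_k$ of~\eqref{eq:def_D} — which your proposal never uses — together with a conditioning argument on the event $\overline{K}_x-K_x=j$, a Young inequality with a $j$-dependent parameter, and the moment bound $\EE\left[\left|\overline{K}_x-K_x\right|^{2p+2}\right]\leq C_p\,\eps^{-(p+1)}$.

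The paper also avoids your random cell-averages altogether by splitting the statement in two: it first proves the case ${\cal A}\equiv 1$ (Lemma~\ref{lemme1-s}), where the sum over cells with a deterministic number of terms is handled by the elementary bound~\eqref{eq:b3} and the random boundary term $A_\eps$ by the argument sketched above; the general ${\cal A}$ is then deduced by integration by parts, writing $\overline{Z}_\eps={\cal A}(\beta)Z_\eps(\alpha,\beta,\omega)-\int_\alpha^\beta{\cal A}'(t)\,Z_\eps(\alpha,t,\omega)\,dt$ and applying H\"older; this is exactly where the factors $\|{\cal A}\|^{2p}_{L^\infty(\alpha,\beta)}$ and $(\beta-\alpha)^p\|{\cal A}'\|^{2p}_{L^2(\alpha,\beta)}$ come from, with no need to approximate ${\cal A}$ by piecewise constants or to make the change of variables interact with ${\cal A}$ at all. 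To repair your proposal you would either need to adopt this reduction, or supply a genuinely new argument decoupling the random summation range and random coefficients from the $Y_k$; as written, the bookkeeping you flag as "to be checked" is precisely the missing (and hardest) part of the proof.
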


The above result heuristically implies that the quantity $\dps
\int_\alpha^\beta
{\cal A}(t)
\psi\left(\phi^{-1}\left(\frac{t}{\eps},\omega\right)\right) dt
$ is of the order of $\sqrt{\eps}$.

We will show below a {\em convergence} result for the random variables 
$\overline{Z}_\eps(\alpha,\beta,\omega)$ (see Lemma~\ref{lemme2}
below). The boundedness result stated in the above lemma is however
sufficient for now. Using it, we indeed prove the following theorem,
which is a key ingredient to prove Theorem~\ref{theo4}. 

\begin{theo}
\label{theo2}
Assume that $a_{\rm per}$ and $\phi$
satisfy~\eqref{eq:hyp_a},~\eqref{phi:cond1},~\eqref{phi:cond2}
and~\eqref{phi:cond3}.
Assume furthermore the independence
conditions~\eqref{eq:assump_Y_iid} and~\eqref{eq:assump_D_iid}.
Let $u^\eps$ be the solution to~\eqref{PB:stoch} and $u_\star$ be the
solution to~\eqref{PB:homo-1D}. Then 
\begin{equation}
u^\eps(x,\omega) - u_\star(x) = 
\int_0^1 K_0(x,t) \,
\psi\left(\phi^{-1}\left(\frac{t}{\eps},\omega\right)\right) dt 
+ r_\eps(x,\omega),
\label{eq:theo2}
\end{equation}
where $K_0$ is defined by~\eqref{def:K0}, $\psi$ is defined
by~\eqref{def:psi}, and there exists a deterministic constant $C$
independent of $\eps$ such that, for any $\eps>0$, 
\begin{equation}
\label{eq:theo2_bound}
\sup\limits_{x\in [0,1]}
\EE \left| r_\eps(x,\cdot) \right| \leq C \eps
\quad \text{and} \quad
\EE \left[ \| r_\eps \|_{L^2(0,1)}^2 \right]
\leq
C \eps^2.
\end{equation}
In addition, for any $p \in \NN^\star$, there exists a deterministic
constant $C_p$ independent of $\eps$ such that 
\begin{equation}
\label{tight_reps}
\forall \eps \in (0,1), \quad
\forall (x,y) \in (0,1)^2, \quad 
\EE \left[ \left| r_\eps(x,\cdot) - r_\eps(y,\cdot) \right|^{2p} \right]
\leq
C_p \eps^{2p} \sqrt{(x-y)^{2p} + \eps^{p-1/2}}
\end{equation}
and
\begin{equation}
\label{tight_reps2}
\forall \eps \in (0,1), \quad
\forall (x,y) \in (0,1)^2, \quad 
\EE \left[ \left| r_\eps(x,\cdot) - r_\eps(y,\cdot) \right|^{2p} \right]
\leq
C_p \eps^p \, (x-y)^{2p}.
\end{equation}
\end{theo}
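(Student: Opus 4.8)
\textbf{Proof plan for Theorem~\ref{theo2}.}

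The plan is to derive the representation formula~\eqref{eq:theo2} from the explicit one-dimensional solution formulas, and then to bound the remainder $r_\eps$ using Lemma~\ref{lemme1}. First I would write down the explicit solution of~\eqref{PB:stoch}: integrating once, $a_{\rm per}(\phi^{-1}(x/\eps,\omega))\,\dps\frac{du^\eps}{dx}(x,\omega) = -F(x) + c^\eps(\omega)$ where $F(t) = \int_0^t f$ and $c^\eps(\omega)$ is a constant fixed by the boundary condition $u^\eps(1,\omega)=0$. Dividing by the coefficient and using $1/a_{\rm per}(\phi^{-1}) = 1/a^\star + \psi(\phi^{-1})$, one gets $\dps\frac{du^\eps}{dx}(x,\omega) = \left(\frac{1}{a^\star}+\psi\left(\phi^{-1}\left(\frac{x}{\eps},\omega\right)\right)\right)(c^\eps(\omega) - F(x))$. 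Integrating from $0$ to $x$ and imposing $u^\eps(0,\omega)=0$, one obtains $u^\eps$ as the sum of a ``periodic-homogenized-like'' part involving $1/a^\star$ and a fluctuating part involving $\psi(\phi^{-1}(\cdot/\eps,\omega))$. The analogous (but $\omega$-independent) computation for $u_\star$ gives $\dps\frac{du_\star}{dx}(x) = \frac{1}{a^\star}(c_\star - F(x))$ with $c_\star = \int_0^1 F$.

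The next step is bookkeeping: subtract the two formulas. The difference $c^\eps(\omega) - c_\star$ is itself an integral of $\psi(\phi^{-1}(\cdot/\eps,\omega))$ against a bounded weight (coming from solving $u^\eps(1,\omega)=0$), hence by Lemma~\ref{lemme1} it is $O(\sqrt\eps)$ in every $L^{2p}$ norm; its square therefore contributes at the $O(\eps)$ level to the remainder. Collecting all terms, the leading contribution to $u^\eps - u_\star$ is $\dps\int_0^x \psi\left(\phi^{-1}\left(\frac{t}{\eps},\omega\right)\right)(c_\star - F(t))\,dt$ plus the term $(c^\eps-c_\star)/a^\star \cdot x$ rearranged; a short manipulation shows this equals $\dps\int_0^1 K_0(x,t)\,\psi\left(\phi^{-1}\left(\frac{t}{\eps},\omega\right)\right)dt$ with $K_0$ as in~\eqref{def:K0} — the factor $\mathbf{1}_{[0,x]}(t)-x$ arises precisely because the boundary-correction constant redistributes the $[0,x]$ integral over $[0,1]$ with weight $x$. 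Everything not of this form is lumped into $r_\eps$, and each such piece is a product of two factors each $O(\sqrt\eps)$ in $L^{2p}$ (by Lemma~\ref{lemme1}), or a single integral against a weight that is itself $O(\eps)$ after one more integration by parts; Cauchy--Schwarz then yields~\eqref{eq:theo2_bound}.

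For the increment estimates~\eqref{tight_reps} and~\eqref{tight_reps2}, the point is that $r_\eps(x,\cdot) - r_\eps(y,\cdot)$ is again of the form $\dps\frac{1}{\sqrt\eps}\int \cdots$, i.e.\ it can be written (up to the $O(\eps)$-type terms) as a sum of terms $\overline{Z}_\eps(\alpha,\beta,\cdot)$-type objects with $|\beta-\alpha| \lesssim |x-y|$ and with amplitude functions ${\cal A}$ whose $L^\infty$ and $L^2$ norms are controlled uniformly; applying Lemma~\ref{lemme1} with exponent $p$ gives a bound of the shape $C_p\,\eps^{\text{power}}\big[(x-y)^p + \eps^{(p-1)/2}\big]\big[\cdots\big]$, and tracking the extra $\eps$ factors coming from the structure of $r_\eps$ (which is itself $O(\eps)$, not $O(\sqrt\eps)$, unlike the leading term) produces the stated right-hand sides — $\eqref{tight_reps2}$ being the ``$(x-y)$ small'' regime where one keeps the full power of $x-y$, and $\eqref{tight_reps}$ the complementary bound that also captures the $\eps^{p-1/2}$ tail when $|x-y|$ is not small compared to $\sqrt\eps$. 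The main obstacle I anticipate is not any single estimate but the careful identification of the kernel $K_0$: one must make sure the boundary-condition constant $c^\eps(\omega)$ is split correctly into its deterministic limit plus an $O(\sqrt\eps)$ fluctuation, and that the fluctuation's contribution to $u^\eps-u_\star$ is exactly the ``$-x$'' piece of $K_0$ rather than being absorbed into $r_\eps$ — getting this wrong would spoil the Gaussian limit in Theorem~\ref{theo4}. A secondary technical point is verifying that the amplitude functions ${\cal A}$ produced along the way (products of $F$, constants, and indicator-type truncations) indeed lie in $L^\infty$ with $L^2$ derivative as required by Lemma~\ref{lemme1}, which holds because $F \in H^1(0,1)$ thanks to $f \in L^2$.
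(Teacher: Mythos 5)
Your plan follows essentially the same route as the paper's proof: explicit one-dimensional formulas for $u^\eps$ and $u_\star$, expansion of the boundary constant $c^\eps(\omega)$ about $c^\star$ so that its $O(\sqrt\eps)$ fluctuation (with deterministic prefactor $a^\star$) produces the "$-x$" part of the kernel $K_0$ while the quadratic correction and the product $(c^\eps-c^\star)\int_0^x\psi(\phi^{-1}(\cdot/\eps))$ are absorbed into $r_\eps$, followed by Lemma~\ref{lemme1} plus Cauchy--Schwarz for~\eqref{eq:theo2_bound} and for the increment bounds, where the two regimes you describe correspond exactly to applying Lemma~\ref{lemme1} on $(y,x)$ for~\eqref{tight_reps} and the pointwise bound $\bigl|\int_y^x\psi(\phi^{-1}(\cdot/\eps))\bigr|\leq\|\psi\|_{L^\infty}|x-y|$ for~\eqref{tight_reps2}. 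This matches the paper's argument, and the points you flag as delicate (the correct splitting of $c^\eps$, and the regularity of the amplitude functions via $F\in H^1$) are precisely the ones handled there.
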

In view of Lemma~\ref{lemme1}, the first term of the right-hand side
of~\eqref{eq:theo2} is of the order of 
$\sqrt{\eps}$. The term $r_\eps$, which is of the order of $\eps$ in
view of~\eqref{eq:theo2_bound}, is hence a higher-order term. The
bounds~\eqref{tight_reps} and~\eqref{tight_reps2} will be useful below
to show that some random process is tight (see Section~\ref{sec:pth4},
Theorem~\ref{theo:cv_process}). 

\medskip

Using the same arguments, we show the following result: 
\begin{theo}
\label{theo3}
Assume that $a_{\rm per}$ and $\phi$
satisfy~\eqref{eq:hyp_a},~\eqref{phi:cond1},~\eqref{phi:cond2}
and~\eqref{phi:cond3}.
Assume furthermore the independence
conditions~\eqref{eq:assump_Y_iid} and~\eqref{eq:assump_D_iid}.
Let $u^\eps$ be the solution to~\eqref{PB:stoch}, $u_\star$ be the
solution to~\eqref{PB:homo-1D}, and $w$ be the corrector, which solves~\eqref{PB:correc-1D}. Then 
\begin{multline}
\label{eq:theo3}
\frac{d}{dx}\left(u^\eps(x,\omega)-u_\star(x) - 
\eps w\left(\frac{x}{\eps},\omega\right)
\frac{d u_\star}{dx}(x)\right) = 
a^{-1}_{\rm per}\left(\phi^{-1}\left(\frac{x}{\eps},\omega\right)\right)
\int_0^1 K_1(t) \, 
\psi\left(\phi^{-1}\left(\frac{t}{\eps},\omega\right)\right)dt
\\
+
f(x) \int_0^x 
\psi\left(\phi^{-1}\left(\frac{t}{\eps},\omega\right)\right)dt
+
\overline{r}_\eps(x,\omega),
\end{multline}
where $\psi$ is defined by~\eqref{def:psi}, $K_1$ is given by
\begin{equation}
K_1(t)= a^\star \left(F(t)- \int_0^1 F(s) ds \right)
\quad \text{with} \quad F(t) = \int_0^t f(s) \, ds,
\label{def:K1}
\end{equation}
and there exists a deterministic constant $C$ independent of
$\eps$ such that, for all $\eps>0$, 
\begin{equation}
\label{eq:theo3_bound}
\EE \left[ \sup_{x\in [0,1]} \left| \overline{r}_\eps(x,\cdot) \right| 
\right] \leq C \eps
\quad \text{and} \quad
\EE \left[ \sup_{x\in [0,1]} \left| \overline{r}_\eps(x,\cdot) \right|^2 
\right] \leq C \eps^2.
\end{equation}
\end{theo}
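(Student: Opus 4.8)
\textbf{Plan of proof for Theorem~\ref{theo3}.}
The strategy closely mirrors the one that (implicitly) underlies Theorem~\ref{theo2}, but now we work at the level of derivatives rather than the functions themselves. The starting point is the explicit one-dimensional formula: since $u^\eps$ solves~\eqref{PB:stoch}, we can integrate once to obtain
\begin{equation*}
a_{\rm per}\left(\phi^{-1}\left(\frac{x}{\eps},\omega\right)\right)
\frac{d u^\eps}{dx}(x,\omega) = -F(x) + c^\eps(\omega),
\qquad F(x) = \int_0^x f,
\end{equation*}
where the constant $c^\eps(\omega)$ is fixed by the boundary condition $u^\eps(1,\omega)=0$, namely $\dps \int_0^1 \frac{-F(t)+c^\eps(\omega)}{a_{\rm per}(\phi^{-1}(t/\eps,\omega))}\,dt = 0$. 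Likewise $a^\star u_\star'(x) = -F(x) + c_\star$ with $c_\star = \int_0^1 F$. The plan is therefore: (i) write $\dps \frac{d u^\eps}{dx} = a_{\rm per}^{-1}(\phi^{-1}(x/\eps,\omega))\,(-F(x)+c^\eps(\omega))$ and expand $a_{\rm per}^{-1}(\phi^{-1}) = (a^\star)^{-1} + \psi(\phi^{-1}(\cdot/\eps,\omega))$ using the very definition~\eqref{def:psi} of $\psi$; (ii) expand the normalizing constant $c^\eps(\omega) - c_\star$ in powers of $\eps$, controlling the remainder via Lemma~\ref{lemme1}; (iii) subtract the analogous expression for $u_\star'$ and the corrector term $\dps \frac{d}{dx}\left(\eps w(x/\eps,\omega) u_\star'(x)\right) = w'(x/\eps,\omega)u_\star'(x) + \eps w(x/\eps,\omega)u_\star''(x)$, using~\eqref{eq:corr} which gives $w'(x/\eps,\omega) = a^\star\,a_{\rm per}^{-1}(\phi^{-1}(x/\eps,\omega)) - 1$; (iv) collect terms, identify the two explicit oscillatory integrals with kernels $K_1$ (the one coming from $c^\eps-c_\star$) and the source term $\dps f(x)\int_0^x \psi(\phi^{-1}(t/\eps,\omega))\,dt$ (coming from differentiating the integral $F$ against the oscillating coefficient), and lump everything else into $\overline r_\eps$.

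The key algebraic point in step (iv) is to recognize that the correction $\eps w(x/\eps,\omega)u_\star'(x)$ is precisely designed so that the leading $O(1)$ oscillation $w'(x/\eps,\omega)u_\star'(x)$ exactly cancels the $O(1)$ oscillatory part of $u^\eps{}' - u_\star'$; what survives is of order $\sqrt\eps$. Concretely, after cancellation one should find that the $\sqrt\eps$-order part of $\dps \frac{d}{dx}(u^\eps-u_\star-\eps w u_\star')$ consists of (a) a term $\dps a_{\rm per}^{-1}(\phi^{-1}(x/\eps,\omega))\,(c^\eps(\omega)-c_\star)$, and the expansion $c^\eps(\omega)-c_\star = a^\star \overline Z_\eps(0,1,\omega)\cdot(\text{something}) + O(\eps)$ produced in step (ii) is what generates the factor $a^\star$ and the kernel $K_1(t) = a^\star(F(t)-\int_0^1 F)$ in~\eqref{def:K1}; and (b) the term $f(x)\int_0^x\psi(\phi^{-1}(t/\eps,\cdot))$ arising because $\dps \frac{d}{dx}\int_0^x F(t)\psi(\phi^{-1}(t/\eps,\omega))\,dt$ is handled differently — one does \emph{not} differentiate the oscillatory integral but rather keeps $F(x)$ multiplied by the oscillation and then splits $F(x) = $ const $+ \int_0^x f$. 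Care with these integration-by-parts / product-rule bookkeeping steps is where most of the work lies.

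For the remainder estimate~\eqref{eq:theo3_bound}, note that $\overline r_\eps$ will be a sum of terms each of which is either (a) $\eps$ times a bounded quantity (e.g. $\eps w(x/\eps,\omega)u_\star''(x)$, using $\|w\|_\infty \le C$ which follows from~\eqref{eq:corr},~\eqref{eq:hyp_a},~\eqref{phi:cond1},~\eqref{phi:cond2} and the stationarity-plus-zero-average normalization — actually $\|w\|_{L^\infty}$ is uniformly bounded because $w' = a^\star a_{\rm per}^{-1}(\phi^{-1}) - 1$ is bounded and $w(0)$ can be normalized), (b) $a_{\rm per}^{-1}(\phi^{-1})$ times the $O(\eps)$ remainder of the constant $c^\eps(\omega)-c_\star$, or (c) products of two $\sqrt\eps$-order oscillatory integrals (of the type controlled by Lemma~\ref{lemme1} with $p=1$), which are $O(\eps)$ in $L^1$ and $O(\eps^2)$ after squaring and taking expectation, by Cauchy--Schwarz and Lemma~\ref{lemme1}. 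To upgrade the pointwise-in-$x$ bounds to the bound on $\dps \EE[\sup_x|\overline r_\eps|]$ and $\dps \EE[\sup_x|\overline r_\eps|^2]$, I would use the representation of $\overline r_\eps$ (or rather its antiderivative) as a continuous function of $x$, a Kolmogorov-type continuity argument, or more simply exploit that the $x$-dependence enters only through smooth deterministic functions ($F$, $f$, $u_\star$) multiplying a finite list of $\omega$-dependent-but-$x$-independent random quantities (such as $c^\eps(\omega)-c_\star$ and $\int_0^1\psi(\phi^{-1}(t/\eps,\cdot))\,dt$), so the supremum over $x$ is essentially a supremum of $|$deterministic smooth function$|$ times $|$random scalar$|$ and passes harmlessly through the expectation. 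The main obstacle, as in Theorem~\ref{theo2}, is the careful expansion and remainder control of the normalizing constant $c^\eps(\omega)$: one must invert $\dps \int_0^1 a_{\rm per}^{-1}(\phi^{-1}(t/\eps,\omega))\,dt = (a^\star)^{-1} + \overline Z_\eps(0,1,\omega)\sqrt\eps$, and the reciprocal of a quantity close to $(a^\star)^{-1}$ must be expanded to first order with the quadratic remainder bounded using the moment estimates of Lemma~\ref{lemme1}.
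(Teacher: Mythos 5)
Your overall skeleton (explicit one-dimensional formulas for $u^\eps$ and $u_\star$, the cancellation produced by $w'(x/\eps,\omega)=a^\star a_{\rm per}^{-1}(\phi^{-1}(x/\eps,\omega))-1$, and the expansion of $c^\eps(\omega)-c^\star$ with the quadratic remainder controlled by Lemma~\ref{lemme1}) is the same as the paper's, and that part would go through. But there is a genuine error in how you treat the term $\eps\, w(x/\eps,\omega)\,u_\star''(x)$ coming from the product rule. You claim $\|w\|_{L^\infty}\leq C$ "because $w'$ is bounded and $w(0)$ can be normalized", and you propose to put this term into $\overline r_\eps$ as an $O(\eps)$ contribution. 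Boundedness of $w'$ only gives $|w(y)|\leq C|y|$, and in this random setting $w$ is genuinely unbounded: by~\eqref{eq:corr} one has $w(y,\omega)=a^\star\int_0^y\psi\left(\phi^{-1}(t,\omega)\right)dt$, which is (up to boundary pieces) a sum of roughly $y$ i.i.d.\ mean-zero variables $Y_k$ and therefore grows like $\sqrt{y}$; only $w'$ has the stationary structure, not $w$ (this is the crucial difference with the periodic case). Consequently $\eps\, w(x/\eps,\omega)u_\star''(x)$ is of order $\sqrt{\eps}$, not $\eps$, and placing it in $\overline r_\eps$ would violate the bound~\eqref{eq:theo3_bound}.

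This term is not a remainder at all: using $u_\star''=-f/a^\star$ and the rescaled primitive formula $w(x/\eps,\omega)=\dfrac{a^\star}{\eps}\int_0^x\psi\left(\phi^{-1}(t/\eps,\omega)\right)dt$ (the paper's~\eqref{eq:corr_1D}), it is \emph{exactly} the explicit second term $f(x)\int_0^x\psi\left(\phi^{-1}(t/\eps,\omega)\right)dt$ of~\eqref{eq:theo3}. Your alternative explanation for the origin of that term (not differentiating $\int_0^x F(t)\psi(\phi^{-1}(t/\eps,\omega))\,dt$ and "splitting $F$") does not produce it: $\frac{d}{dx}\int_0^x F(t)\psi(\phi^{-1}(t/\eps,\omega))dt=F(x)\psi(\phi^{-1}(x/\eps,\omega))$, which has the roles of the evaluated and integrated factors interchanged relative to $f(x)\int_0^x\psi$. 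Once the corrector term is converted correctly, the whole remainder reduces to $\overline r_\eps(x,\omega)=-\rho^\eps(\omega)/a_{\rm per}\left(\phi^{-1}(x/\eps,\omega)\right)$ with $\rho^\eps$ the quadratic remainder of the $c^\eps-c^\star$ expansion; since $\rho^\eps$ does not depend on $x$ and $a_{\rm per}^{-1}\leq a_-^{-1}$, the bounds on $\EE\left[\sup_x|\overline r_\eps|\right]$ and $\EE\left[\sup_x|\overline r_\eps|^2\right]$ follow directly from $\EE|\rho^\eps|\leq C\eps$ and $\EE|\rho^\eps|^2\leq C\eps^2$ (Lemma~\ref{lemme1} with Cauchy--Schwarz); no Kolmogorov-type continuity argument is needed.
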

Again, in view of Lemma~\ref{lemme1}, the two first terms
of the right-hand side of~\eqref{eq:theo3} are of the order of
$\sqrt{\eps}$. The term 
$\overline{r}_\eps$, which is of the order of $\eps$, 
is hence a higher-order term.

\begin{remark}
It is easy to deduce from~\eqref{eq:theo3}, using Lemma~\ref{lemme1}
and~\eqref{eq:theo3_bound}, 
that there exists a deterministic constant $C$ independent of $\eps$
such that 
\begin{equation}
\label{eq:H1_conv1}
\EE \left[ \left\| 
\frac{d}{dx}\left(u^\eps(x,\omega)-u_\star(x) - 
\eps w\left(\frac{x}{\eps},\omega\right)
\frac{d u_\star}{dx}(x)\right)
\right\|^2_{L^2(0,1)} \right] \leq C \eps.
\end{equation}
Likewise, we deduce from~\eqref{eq:theo2}, using Lemma~\ref{lemme1}
and~\eqref{eq:theo2_bound}, 
that 
\begin{equation}
\label{eq:H1_conv2}
\EE \left[ \left\| 
u^\eps(x,\omega)-u_\star(x) 
\right\|^2_{L^2(0,1)} \right] \leq C \eps.
\end{equation} 
Using the expression~\eqref{eq:corr_1D} below, we infer
from~\eqref{eq:H1_conv1} and~\eqref{eq:H1_conv2} that
\begin{equation}
\label{eq:H1_conv3}
\EE \left[ \left\| 
u^\eps(x,\omega)-u_\star(x) - 
\eps w\left(\frac{x}{\eps},\omega\right)
\frac{d u_\star}{dx}(x)
\right\|^2_{H^1(0,1)} \right] \leq C \eps.
\end{equation}
We recover (in the one-dimensional situation) a classical result of
homogenization: the corrector $w$ 
allows to obtain a convergence result in the $H^1$ strong norm. We refer
to~\cite[Theorem 3]{Papanicolaou-Varadhan} for a corresponding result in
classical random homogenization (in the multidimensional setting).
\end{remark}

The proof of Theorems~\ref{theo2} and~\ref{theo3} are
direct consequences of Lemma~\ref{lemme1} and of the analytical
expression of $u^\eps$ and $u_\star$. 

\begin{proof}[Proof of Theorem~\ref{theo2}]
Introduce $\dps F(x)=\int_0^x f(t)dt$. The solution to~\eqref{PB:stoch} reads
\begin{equation}
u^\eps(x,\omega)=c^\eps(\omega) \int_0^x\frac{1}{a_{\rm
     per}\left(\phi^{-1}\left(\frac{t}{\eps},\omega\right)\right)}dt - \int_0^x\frac{F(t)}{a_{\rm per}\left(\phi^{-1}\left(\frac{t}{\eps},\omega\right)\right)}dt,
\label{eq:ueps}
\end{equation}
where 
\begin{equation}
  c^\eps(\omega)=\frac{ \dps \int_0^1\frac{F(t)}{a_{\rm
        per}\left(\phi^{-1}\left(\frac{t}{\eps},\omega\right)\right)}dt}{ \dps \int_0^1\frac{1}{a_{\rm per}\left(\phi^{-1}\left(\frac{t}{\eps},\omega\right)\right)}dt}.
\label{eq:ceps}
\end{equation}
Likewise, the solution $u_\star$ of the homogenized problem~\eqref{PB:homo} is
\begin{eqnarray}
u_\star(x)=c^\star \frac{x}{a^\star} - \int_0^x \frac{F(t)}{a^\star} dt,
\label{eq:ustar}
\end{eqnarray}
where $a^\star$ is given by~\eqref{def:astar-1d} and
\begin{eqnarray}
 c^\star = \int_0^1 F(t) dt.\label{eq:cstar}
\end{eqnarray}


\noindent
\textbf{Step 1: Representation formula} 

We compute the residual process using~\eqref{eq:ustar} and~\eqref{eq:ueps}:
\begin{eqnarray}
u^\eps(x,\omega) - u_\star(x) &=&
c^\eps(\omega) \int_0^x\frac{1}{a_{\rm
     per}\left(\phi^{-1}\left(\frac{t}{\eps},\omega\right)\right)}dt
- 
c^\star \frac{x}{a^\star}
- \int_0^x F(t) \psi \left(
  \phi^{-1}\left(\frac{t}{\eps},\omega\right)\right) \, dt
\nonumber
\\
&=&
c^\eps(\omega) \int_0^x \psi 
\left( \phi^{-1}\left(\frac{t}{\eps},\omega\right)\right)dt
+ (c^\eps(\omega) - c^\star) \frac{x}{a^\star}
- \int_0^x F(t) \psi \left(
\phi^{-1}\left(\frac{t}{\eps},\omega\right)\right) \, dt
\nonumber
\\
&=&
(c^\eps(\omega) - c^\star) \int_0^x
\psi\left(\phi^{-1}\left(\frac{t}{\eps},\omega \right)\right) dt
+ (c^\eps(\omega)-c^\star) \frac{x}{a^\star}
\nonumber
\\
& & +
\int_0^x \left(c^\star - F(t)\right)
\psi\left(\phi^{-1}\left(\frac{t}{\eps},\omega \right)\right) dt,
\label{eq:ueps-ustar}
\end{eqnarray}
where $\psi$ is defined by~\eqref{def:psi}. We also infer
from~\eqref{eq:ceps} that
\begin{eqnarray}
c^\eps(\omega) - c^\star &=& 
\left( \int_0^1 \frac{1}{a_{\rm 
      per}\left(\phi^{-1}\left(\frac{t}{\eps},\omega\right)\right)}dt \right)^{-1} 
\int_0^1 \left(F(t) - c^\star \right) \frac{1}{a_{\rm
    per}\left(\phi^{-1}\left(\frac{t}{\eps},\omega\right)\right)}dt
\nonumber
\\
&=&
\left( \int_0^1 \frac{1}{a_{\rm
      per}\left(\phi^{-1}\left(\frac{t}{\eps},\omega\right)\right)}dt \right)^{-1} 
\int_0^1 \left(F(t) - c^\star \right) \psi
\left(\phi^{-1}\left(\frac{t}{\eps},\omega\right)\right)dt
\label{eq:ceps-cstar-pre} 
\end{eqnarray}
where we have used that, in view of~\eqref{eq:cstar}, we have $\dps
\int_0^1 \left(F(t) - c^\star \right) \frac{1}{a^\star}dt=0$. Observe
now that 
$$
\left( \int_0^1 \frac{1}{a_{\rm
      per}\left(\phi^{-1}\left(\frac{t}{\eps},\omega\right)\right)}dt \right)^{-1} 
=
a^\star - \frac{a^\star}{\dps \int_0^1 a^{-1}_{\rm per}\left(\phi^{-1}\left(\frac{t}{\eps},\omega\right)\right) dt} \int_0^1 \psi\left(\phi^{-1}\left(\frac{t}{\eps},\omega\right)\right) dt.
$$
We then deduce from~\eqref{eq:ceps-cstar-pre} that
\begin{equation}
c^\eps(\omega) - c^\star =
a^\star \int_0^1 \left(F(t)- c^\star\right)
\psi\left(\phi^{-1}\left(\frac{t}{\eps},\omega\right)\right) dt -
\rho^\eps(\omega), 
\label{eq:ceps-cstar}
\end{equation}
where
\begin{equation}
\rho^\eps(\omega) = \left[ \frac{a^\star}{\dps \int_0^1 a^{-1}_{\rm
      per}\left(\phi^{-1}\left(\frac{t}{\eps},\omega\right)\right) dt} \int_0^1 \psi\left(\phi^{-1}\left(\frac{t}{\eps},\omega\right)\right) dt \right]
\int_0^1\left(F(t)- c^\star\right)
\psi\left(\phi^{-1}\left(\frac{t}{\eps},\omega\right)\right) dt. 
\label{eq:rho}
\end{equation}
Collecting~\eqref{eq:ueps-ustar} and~\eqref{eq:ceps-cstar}, we write
\begin{eqnarray*}
u^\eps(x,\omega) - u_\star(x) &=&
(c^\eps(\omega) - c^\star) \int_0^x
\psi\left(\phi^{-1}\left(\frac{t}{\eps},\omega \right)\right) dt
+
\int_0^x \left(c^\star - F(t)\right)
\psi\left(\phi^{-1}\left(\frac{t}{\eps},\omega \right)\right) dt
\\
& & + \ x \int_0^1 \left(F(t)-c^\star\right)
\psi\left(\phi^{-1}\left(\frac{t}{\eps},\omega\right)\right) dt
- \frac{x}{a^\star} \rho^\eps(\omega)
\\
&=& r_\eps(x,\omega) + \int_0^x \left(c^\star - F(t)\right)
\psi\left(\phi^{-1}\left(\frac{t}{\eps},\omega \right)\right) dt
+ \ x \int_0^1 \left(F(t)- c^\star\right)
\psi\left(\phi^{-1}\left(\frac{t}{\eps},\omega\right)\right) dt
\\
&=& r_\eps(x,\omega) + \int_0^1 K_0(x,t) 
\psi\left(\phi^{-1}\left(\frac{t}{\eps},\omega \right)\right) dt
\end{eqnarray*}
with
$$
K_0(x,t) = \left( \mathbf{1}_{[0,x]}(t) - x \right) 
\left(c^\star - F(t)\right)
$$
and
\begin{equation}
\label{eq:def_r_eps}
r_\eps(x,\omega) = - \frac{x}{a^\star} \rho^\eps(\omega) 
+ (c^\eps(\omega)-c^\star)\int_0^x\psi\left(\phi^{-1}\left(\frac{t}{\eps},\omega \right)\right) dt.
\end{equation}
In view of~\eqref{eq:cstar}, we recover the
expression~\eqref{def:K0} of $K_0$. We thus have written the residual in
the form~\eqref{eq:theo2}. 

\bigskip

\noindent
\textbf{Step 2: Proof of the bound~\eqref{eq:theo2_bound}}

We first bound $\rho^\eps(\omega)$.
We infer from~\eqref{eq:rho} that
\begin{equation}
\label{eq:bound_rho}
\left|\rho^\eps(\omega)\right|
\leq a^+ a^\star 
\left| 
\int_0^1 \psi \left(\phi^{-1}\left(\frac{t}{\eps},\omega\right)\right) dt
\right| 
\ \
\left| 
\int_0^1 \left(F(t)- c^\star\right) 
\psi\left(\phi^{-1}\left(\frac{t}{\eps},\omega\right)\right) dt \right|.
\end{equation}
Using the Cauchy Schwartz inequality, we deduce that
$$
\EE(\left|\rho^\eps\right|) \leq \eps a^+ a^\star 
\sqrt{\EE\left( \left| \frac{1}{\sqrt{\eps}}
\int_0^1 \psi \left(\phi^{-1}\left(\frac{t}{\eps},\cdot\right)\right) dt
\right|^2\right)} 
\ 
\sqrt{\EE\left( \left| \frac{1}{\sqrt{\eps}}
\int_0^1 \left(F(t)- c^\star\right) 
\psi\left(\phi^{-1}\left(\frac{t}{\eps},\cdot\right)\right) dt 
\right|^2\right)}.
$$
Using Lemma~\ref{lemme1} with $p=1$, $\alpha=0$, $\beta=1$, ${\cal A}(t) = 1$
and ${\cal A}(t) = F(t)- c^\star$, 
we obtain that there exists a constant $C$ independent of $\eps$ such that 
\begin{equation}
\EE(|\rho^\eps|) \leq C \eps.
\label{eq:majrho}
\end{equation}
We also deduce from~\eqref{eq:bound_rho} that, for any $p \in \NN^\star$,
$$
\EE \left(\left|\rho^\eps\right|^{2p}\right) \leq (a^+ a^\star)^{2p} \eps^{2p}
\sqrt{\EE\left( \left| \frac{1}{\sqrt{\eps}}
\int_0^1 \psi \left(\phi^{-1}\left(\frac{t}{\eps},\cdot\right)\right) dt
\right|^{4p}\right)} 
\ 
\sqrt{\EE\left( \left| \frac{1}{\sqrt{\eps}}
\int_0^1 \left(F(t)- c^\star\right) 
\psi\left(\phi^{-1}\left(\frac{t}{\eps},\cdot\right)\right) dt 
\right|^{4p}\right)}.
$$
Using again Lemma~\ref{lemme1}, 
we obtain that there exists a constant $C_p$ independent of $\eps$ such that 
\begin{equation}
\EE \left(|\rho^\eps|^{2p}\right) \leq C_p \eps^{2p}.
\label{eq:majrho2}
\end{equation}
Using the obtained bounds on $\rho^\eps$, we now estimate $r_\eps$. We infer from~\eqref{eq:ceps-cstar}, using~\eqref{eq:majrho2} and
Lemma~\ref{lemme1}, that, for any $p \in \NN^\star$, 
\begin{eqnarray}
\EE \left( \left| c^\eps - c^\star \right|^{2p} \right) 
&\leq& (a^\star)^{2p} \eps^p
C_p \EE \left( \left| \frac{1}{\sqrt{\eps}} \int_0^1 \left(F(t)- c^\star\right)
\psi\left(\phi^{-1}\left(\frac{t}{\eps},\cdot\right)\right) dt 
\right|^{2p} \right)
+
C_p \EE \left( \left| \rho^\eps \right|^{2p} \right)
\nonumber
\\
&\leq&
(a^\star)^{2p} C_p \eps^p 
+
C_p \eps^{2p}
\nonumber
\\
&\leq&
C_p \eps^p 
\label{eq:ceps-cstar_maj}
\end{eqnarray}
for a constant $C_p$ independent of $\eps$.
In view of~\eqref{eq:def_r_eps}, we thus obtain, using~\eqref{eq:majrho}
and~\eqref{eq:ceps-cstar_maj}, that
\begin{eqnarray*}
\EE(|r_\eps(x,\cdot)|) 
&\leq& 
\frac{x}{a^\star} \EE(|\rho^\eps|) 
+
\EE\left(\left|(c^\eps-c^\star)\int_0^x\psi\left(\phi^{-1}\left(\frac{t}{\eps},\cdot
      \right)\right) dt\right|\right)
\\
&\leq& 
C \eps
+ \sqrt{\eps}
\sqrt{ \EE\left(\left|(c^\eps-c^\star) \right|^2 \right) }
\sqrt{ \EE\left(\left| \frac{1}{\sqrt{\eps}}
\int_0^x\psi\left(\phi^{-1}\left(\frac{t}{\eps},\cdot
      \right)\right) dt\right|^2\right) }
\\
&\leq& 
C \eps
\end{eqnarray*}
for a constant $C$ independent from $\eps$ and $x \in (0,1)$. This
concludes the proof of the first assertion in~\eqref{eq:theo2_bound}. 

\medskip

Similarly, we have
$$
\left( r_\eps(x,\omega) \right)^2 
\leq
\frac{2}{(a^\star)^2} \rho^\eps(\omega)^2
+
2 (c^\eps(\omega)-c^\star)^2 \left| 
\int_0^x\psi\left(\phi^{-1}\left(\frac{t}{\eps},\omega
  \right)\right) dt \right|^2,
$$
thus
\begin{eqnarray*}
\EE \left[ \| r_\eps \|_{L^2(0,1)}^2 \right]
& \leq &
\frac{2}{(a^\star)^2} \EE \left[ |\rho^\eps|^2 \right]
+
2 \int_0^1  \EE \left[ (c^\eps-c^\star)^2 \left| 
\int_0^x\psi\left(\phi^{-1}\left(\frac{t}{\eps},\cdot
  \right)\right) dt \right|^2 \right]dx
\\
& \leq &
C \eps^2
+
2 \int_0^1  \sqrt{
\EE \left[ (c^\eps-c^\star)^4 \right]
\EE \left[ \left| 
\int_0^x \psi\left(\phi^{-1}\left(\frac{t}{\eps},\cdot
  \right)\right) dt \right|^4 \right] }dx.
\end{eqnarray*}
Using~\eqref{eq:ceps-cstar_maj} and Lemma~\ref{lemme1} with $p=2$, we
deduce that 
$$
\EE \left[ \| r_\eps \|_{L^2(0,1)}^2 \right]
\leq
C \eps^2
+
2 \int_0^1  \sqrt{C \eps^4}dx
\leq
C \eps^2
$$
for a constant $C$ independent from $\eps$. 
This concludes the proof of the second assertion
in~\eqref{eq:theo2_bound}.  

\bigskip

\noindent
\textbf{Step 3: Proof of the bounds~\eqref{tight_reps} and~\eqref{tight_reps2}}

We first prove~\eqref{tight_reps}. In view of~\eqref{eq:def_r_eps}, we
have
$$
r_\eps(x,\omega) - r_\eps(y,\omega)
=
\frac{y-x}{a^\star} \rho^\eps(\omega) 
+ (c^\eps(\omega)-c^\star)
\int_y^x
\psi\left(\phi^{-1}\left(\frac{t}{\eps},\omega \right)\right) dt,
$$
thus
\begin{equation}
\label{eq:diff_r}
\left| r_\eps(x,\omega) - r_\eps(y,\omega) \right|^{2p}
\leq
C_p \left( \frac{y-x}{a^\star} \right)^{2p} (\rho^\eps(\omega))^{2p}
+ C_p (c^\eps(\omega)-c^\star)^{2p}
\left| \int_y^x
\psi\left(\phi^{-1}\left(\frac{t}{\eps},\omega \right)\right) dt
\right|^{2p},
\end{equation}
and
\begin{eqnarray*}
\EE \left[ \left| r_\eps(x,\cdot) - r_\eps(y,\cdot) \right|^{2p} \right]
& \leq &
C_p \left( \frac{y-x}{a^\star} \right)^{2p} 
\EE \left[ (\rho^\eps)^{2p} \right]
+ C_p \sqrt{\EE [(c^\eps-c^\star)^{4p}]}
\sqrt{\EE \left[ \left| \int_y^x
\psi\left(\phi^{-1}\left(\frac{t}{\eps},\cdot \right)\right) dt
\right|^{4p} \right] } 
\\
& \leq &
C_p \eps^{2p} (y-x)^{2p} 
+ C_p \sqrt{\eps^{2p}}
\sqrt{\eps^{2p} \left( (x-y)^{2p} + \eps^{(2p-1)/2} \right)} 
\\
& \leq &
C_p \eps^{2p} \left[ (y-x)^{2p} + \sqrt{(x-y)^{2p} + \eps^{p-1/2}} \right].
\end{eqnarray*}
Since $|y-x| \leq 1$, we have 
$\dps (y-x)^{2p} \leq |y-x|^p \leq \sqrt{(x-y)^{2p} + \eps^{p-1/2}}$,
and thus 
$$
\EE \left[ \left| r_\eps(x,\cdot) - r_\eps(y,\cdot) \right|^{2p} \right]
\leq
C_p \eps^{2p} \sqrt{(x-y)^{2p} +  \eps^{p-1/2}}.
$$
This concludes the proof of~\eqref{tight_reps}. We finally prove~\eqref{tight_reps2}. We infer from~\eqref{eq:diff_r} that
$$
\left| r_\eps(x,\omega) - r_\eps(y,\omega) \right|^{2p}
\leq
C_p \left( \frac{y-x}{a^\star} \right)^{2p} (\rho^\eps(\omega))^{2p}
+ C_p (c^\eps(\omega)-c^\star)^{2p} \ (x-y)^{2p} \ 
\| \psi \|_{L^\infty(\RR)}^{2p}, 
$$
hence, using~\eqref{eq:majrho2} and~\eqref{eq:ceps-cstar_maj},
$$
\EE \left[ \left| r_\eps(x,\cdot) - r_\eps(y,\cdot) \right|^{2p} \right]
\leq
C_p (x-y)^{2p} \left[ \eps^{2p} + \eps^p \right].
$$
This concludes the proof of~\eqref{tight_reps2} and thus that of
Theorem~\ref{theo2}.
\end{proof}

\bigskip

\begin{proof}[Proof of Theorem~\ref{theo3}]
Recall that the solution to the corrector problem~\eqref{PB:correc-1D}
satisfies~\eqref{eq:corr}. We thus have, using~\eqref{eq:ueps}
and~\eqref{eq:ustar},  
\begin{eqnarray*}
\frac{d}{dx}\left(u^\eps(x,\omega)-u_\star(x) - \eps
  w\left(\frac{x}{\eps},\omega\right)\frac{d
    u_\star}{dx}(x)\right) 
&=& 
\frac{d u^\eps}{dx}(x,\omega)-\frac{d u_\star}{dx}(x) \left( 1 +
  w'\left( \frac{x}{\eps},\omega \right) \right) 
- \eps \frac{d^2 u_\star}{dx^2}(x)
w\left(\frac{x}{\eps},\omega\right)
\nonumber 
\\
&=&
\frac{c^\eps(\omega) - c^\star}{
a_{\rm per}\left(\phi^{-1}\left(\frac{x}{\eps},\omega\right)\right)}
- \eps \frac{d^2 u_\star}{dx^2}(x)
w\left(\frac{x}{\eps},\omega\right).
\end{eqnarray*}
Using~\eqref{eq:ceps-cstar}, we deduce that
\begin{eqnarray}
&&
\frac{d}{dx}\left(u^\eps(x,\omega)-u_\star(x) - \eps
  w\left(\frac{x}{\eps},\omega\right)\frac{d
    u_\star}{dx}(x)\right) 
\nonumber
\\
&=& 
\frac{a^\star}{
a_{\rm per}\left(\phi^{-1}\left(\frac{x}{\eps},\omega\right)\right)}
\int_0^1 \left(F(t)- c^\star\right)
\psi\left(\phi^{-1}\left(\frac{t}{\eps},\omega\right)\right) dt 
- \eps \frac{d^2 u_\star}{dx^2}(x) 
w\left(\frac{x}{\eps},\omega\right)
- \frac{\rho^\eps(\omega)}{a_{\rm
    per}\left(\phi^{-1}\left(\frac{x}{\eps},\omega\right)\right)}
\nonumber
\\
&=&
a^{-1}_{\rm per}\left(\phi^{-1}\left(\frac{x}{\eps},\omega\right)\right)
\int_0^1 K_1(t) 
\psi\left(\phi^{-1}\left(\frac{t}{\eps},\omega\right)\right) dt 
+
\eps \frac{f(x)}{a^\star}
w\left(\frac{x}{\eps},\omega\right) 
+ \overline{r}^\eps(x,\omega),
\label{eq:eq_pre}
\end{eqnarray}
with $K_1$ defined by~\eqref{def:K1} and
\begin{equation}
\label{eq:eq}
\overline{r}^\eps(x,\omega)
= 
- \frac{\rho^\eps(\omega)}{a_{\rm
    per}\left(\phi^{-1}\left(\frac{x}{\eps},\omega\right)\right)}.
\end{equation}
Observe now that, in view of~\eqref{eq:corr} and~\eqref{def:psi}, we have
$$
w(y,\omega) = a^\star \int_0^y 
\psi\left(\phi^{-1}\left(t,\omega\right)\right) dt,
$$ 
where we have chosen the integration constant in $w$ such that
$w(0,\omega) = 0$ almost surely. Thus 
\begin{equation}
\label{eq:corr_1D}
w\left(\frac{x}{\eps},\omega\right) 
= 
a^\star \int_0^{x/\eps} 
\psi\left(\phi^{-1}\left(t,\omega\right)\right) dt
=
\frac{a^\star}{\eps} \int_0^x 
\psi\left(\phi^{-1}\left(\frac{t}{\eps},\omega\right)\right) dt.
\end{equation}
Collecting this equation with~\eqref{eq:eq_pre} yields~\eqref{eq:theo3}.  
The bound~\eqref{eq:theo3_bound} follows
from~\eqref{eq:eq},~\eqref{eq:majrho} and~\eqref{eq:majrho2}. 
This concludes the proof of Theorem~\ref{theo3}.
\end{proof}

\subsection{Proof of Theorem~\ref{theo4}}
\label{sec:pth4}

In this section, we prove that the random process 
$\dps \frac{u^\eps(x,\omega) - u_\star(x)}{\sqrt{\eps}}$ converges in
distribution to a Gaussian random process that we characterize.  
Using \eqref{eq:theo2}, we see that
\begin{equation}
\label{eq:reste}
\frac{u^\eps(x,\omega) - u_\star(x)}{\sqrt{\eps}} 
= 
G_\eps(x,\omega) + R_\eps(x,\omega),
\end{equation}
where
\begin{eqnarray}
G_\eps(x,\omega) &=& 
\frac{1}{\sqrt{\eps}}\int_0^1 K_0(x,t) \,
\psi\left(\phi^{-1}\left(\frac{t}{\eps},\omega\right)\right) dt,
\label{def:Geps}
\\ 
R_\eps(x,\omega) &=& 
\frac{1}{\sqrt{\eps}} r_\eps(x,\omega).
\label{def:Reps}
\end{eqnarray}
In view of~\eqref{eq:theo2_bound}, we have
$$
\sup\limits_{x\in [0,1]}
\EE \left| R_\eps(x,\cdot) \right| \leq C \sqrt{\eps}
$$
for a constant $C$ independent of $\eps$. As a consequence, 
\begin{equation}
\label{eq:borne_reste}
\forall x \in (0,1), \quad \text{$R_\eps(x,\cdot)$ converges to 0 in
  probability.} 
\end{equation}
We are thus left with studying the behaviour of $G_\eps(x,\omega)$ as
$\eps \to 0$.  

To prove that the random process $G_\eps(x,\omega)$ converges in
distribution, we will use the following result:

\begin{theo}[\cite{Billingsley1968}, page 54]
\label{theo:cv_process}
Suppose that $\left(G_\eps\right)_{\eps \in (0,1)}$ and $G_0$
are random processes with values in the space of continuous functions
$C^0(0,1)$ with $G_\eps(0,\omega)=G_0(0,\omega)=0$ almost
surely. Assume that
\begin{enumerate}
\item[(i)] for any $k \in \NN^\star$ and any 
$0 \leq x_1 \leq \dots \leq x_k \leq 1$, the random variable
$(G_\eps(x_1,\omega),\dots,G_\eps(x_k,\omega)) \in \RR^k$
converges in distribution to the random variable 
$(G_0(x_1,\omega),\dots,G_0(x_k,\omega))$ as $\eps \rightarrow 0$.
\item[(ii)] $\left(G_\eps\right)_{\eps \in (0,1)}$ is a tight
  sequence of random processes in $C^0(0,1)$. A sufficient condition for
  the tightness of $\left(G_\eps\right)_{\eps\in (0,1)}$
  is the Kolmogorov criterion: there exist $\delta>0$, $\beta>0$ and $C>0$
  such that
\begin{equation}
\label{eq:kol}
\forall \eps \in (0,1), \quad
\forall (x,y) \in (0,1)^2, \quad 
\EE\left[ |G_\eps(x,\cdot) - G_\eps(y,\cdot)|^\beta 
\right] \leq C |x-y|^{1+\delta}.
\end{equation}
\end{enumerate}
Then the process $G_\eps$ converges in distribution to the process $G_0$
as $\eps$ goes to $0$. 
\end{theo}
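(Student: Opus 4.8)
This is a classical statement, whose proof I would organize into two independent parts. The first is purely quantitative: I would show that the Kolmogorov moment bound~\eqref{eq:kol} forces the family of laws $\{\mathcal{L}(G_\eps)\}_{\eps\in(0,1)}$ to be tight in $C^0([0,1])$ (with the uniform norm). The second is soft: once tightness is available, I would use the convergence of finite-dimensional marginals in~(i) to pin down every subsequential weak limit of $\{\mathcal{L}(G_\eps)\}$ as $\mathcal{L}(G_0)$, and conclude that $G_\eps$ converges in distribution to $G_0$.

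\textbf{Step 1 (tightness).} For $g\in C^0([0,1])$ and $\delta>0$ I write $m_g(\delta)=\sup\{|g(s)-g(t)|:|s-t|\le\delta\}$ for its modulus of continuity, and for $n\in\NN$ set $\Delta_n(g)=\max_{1\le k\le 2^n}|g(k2^{-n})-g((k-1)2^{-n})|$. I would first invoke the dyadic chaining lemma: there is a universal constant $C_0$ with $m_g(2^{-n})\le C_0\sum_{m>n}\Delta_m(g)$ for every $g\in C^0([0,1])$ (proved for dyadic $s,t$ and extended by continuity). Next, applying~\eqref{eq:kol} to each of the $2^m$ level-$m$ dyadic increments gives $\EE[\Delta_m(G_\eps)^\beta]\le\sum_{k=1}^{2^m}\EE[|G_\eps(k2^{-m})-G_\eps((k-1)2^{-m})|^\beta]\le 2^m\,C\,2^{-m(1+\delta)}=C\,2^{-m\delta}$, uniformly in $\eps$. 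Summing $L^\beta$ norms over $m>n$ (a convergent geometric series, since $\delta>0$) yields $\sup_{\eps\in(0,1)}\EE[m_{G_\eps}(2^{-n})^\beta]\le C'\,2^{-n\delta}$, and Markov's inequality then gives, for every $\rho>0$, that $\sup_{\eps\in(0,1)}\PP(m_{G_\eps}(2^{-n})\ge\rho)\to 0$ as $n\to\infty$. Because $G_\eps(0,\omega)=0$ almost surely, the pointwise-boundedness requirement of the Arzel\`a--Ascoli theorem is free, and that theorem identifies the compact subsets of $C^0([0,1])$ with sets of the form $K=\{g:g(0)=0,\ m_g(2^{-n_j})\le 1/j\ \text{for all}\ j\ge 1\}$. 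Given $\eta>0$, I would pick $n_j$ so large that $\sup_\eps\PP(m_{G_\eps}(2^{-n_j})\ge 1/j)\le\eta2^{-j}$; the resulting $K$ is compact and $\PP(G_\eps\notin K)\le\eta$ for all $\eps$, which is exactly tightness of $\{\mathcal{L}(G_\eps)\}$.

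\textbf{Step 2 (identification of the limit).} Since $C^0([0,1])$ is Polish, Prokhorov's theorem converts tightness into relative compactness for weak convergence: any sequence $\eps_k\to 0$ has a subsequence along which $G_{\eps_k}$ converges in distribution to some $C^0([0,1])$-valued process $Q$. For fixed $0\le x_1\le\dots\le x_k\le 1$, the evaluation map $g\mapsto(g(x_1),\dots,g(x_k))$ is continuous, so the continuous mapping theorem and uniqueness of weak limits in $\RR^k$ force $(Q(x_1),\dots,Q(x_k))$ to have the law $\lim_k\mathcal{L}(G_{\eps_k}(x_1),\dots,G_{\eps_k}(x_k))$, which by hypothesis~(i) equals $\mathcal{L}(G_0(x_1),\dots,G_0(x_k))$. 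Hence $Q$ and $G_0$ have the same finite-dimensional marginals; since the coordinate projections generate the Borel $\sigma$-field of the separable space $C^0([0,1])$, a Borel probability measure on $C^0([0,1])$ is determined by its finite-dimensional marginals, so $Q$ has the law of $G_0$. Thus every subsequence of $\{\mathcal{L}(G_\eps)\}$ has a further subsequence converging weakly to $\mathcal{L}(G_0)$, and combined with relative compactness this gives $G_\eps\to G_0$ in distribution as $\eps\to 0$.

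\textbf{Main obstacle.} Step 2 is entirely soft (Prokhorov plus the fact that finite-dimensional marginals separate laws on $C^0$), so the real work is in Step 1: extracting from the two-point bound~\eqref{eq:kol} a control of the \emph{whole} modulus of continuity that is uniform in $\eps$. The dyadic chaining inequality $m_g(2^{-n})\le C_0\sum_{m>n}\Delta_m(g)$ and the strict positivity of $\delta$ (which makes the geometric series $\sum_m 2^{-m\delta/\beta}$ converge) are precisely the ingredients that make this estimate go through; without $\delta>0$ the bound~\eqref{eq:kol} would not suffice.
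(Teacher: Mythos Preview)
The paper does not supply its own proof of this statement: it is quoted verbatim as a classical result from Billingsley~\cite{Billingsley1968}, page~54, and used as a black box in the proof of Theorem~\ref{theo4}. There is therefore nothing to compare against; your write-up is a correct and standard rendition of the argument one finds in Billingsley (Kolmogorov's moment criterion $\Rightarrow$ tightness via dyadic chaining and Arzel\`a--Ascoli, then Prokhorov plus the fact that finite-dimensional marginals determine laws on $C^0([0,1])$).

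One small caveat: when you write ``summing $L^\beta$ norms over $m>n$'' you are implicitly using the triangle inequality in $L^\beta$, which requires $\beta\ge 1$. The statement only assumes $\beta>0$. For $0<\beta<1$ the fix is immediate---use the elementary subadditivity $(\sum_m a_m)^\beta\le\sum_m a_m^\beta$ for nonnegative $a_m$ inside the expectation, which gives $\EE[m_{G_\eps}(2^{-n})^\beta]\le C_0^\beta\sum_{m>n}\EE[\Delta_m(G_\eps)^\beta]\le C_0^\beta C\sum_{m>n}2^{-m\delta}$, and the rest of the argument is unchanged. In the paper's application (Section~\ref{sec:pth4}) one takes $\beta=2p$ with $p>3$, so $\beta\ge 1$ anyway and the issue does not arise.
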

%
%

For any $x \in (0,1)$, the random variable $G_\eps(x,\omega)$ is of the form of
the random variable $\overline{Z}_\eps(\alpha,\beta,\omega)$
defined in~\eqref{eq:def_Z_bar}, with $\alpha=0$, $\beta=1$ and ${\cal
  A}(t) = K_0(x,t)$.  
In Lemma~\ref{lemme1}, we have shown that the random variable 
$\overline{Z}_\eps(\alpha,\beta,\omega)$ is bounded in the $L^{2p}$
norm. We now show that this random variable converges in law to a
Gaussian random variable. This will be a key ingredient to prove the first
condition of Theorem~\ref{theo:cv_process}. 

\begin{lemme}
\label{lemme2}
Assume that $a_{\rm per}$ and $\phi$
satisfy~\eqref{eq:hyp_a},~\eqref{phi:cond1},~\eqref{phi:cond2}
and~\eqref{phi:cond3}.
Assume furthermore the independence
conditions~\eqref{eq:assump_Y_iid} and~\eqref{eq:assump_D_iid}.
For any $0 \leq \alpha \leq \beta \leq 1$, consider a function ${\cal
  A}$, piecewise continuous over $(\alpha,\beta)$, with a finite number
of discontinuities located at points $\{t_k\}_{1 \leq k \leq m}$, and
such that ${\cal A}' \in L^1(t_k,t_{k+1})$ for any $1 \leq k \leq m-1$. 
Consider the random variable
\begin{equation}
\label{eq:def_Z_bar2}
\overline{Z}_\eps(\alpha,\beta,\omega)=
\frac{1}{\sqrt{\eps}} 
\int_\alpha^\beta {\cal A}(t) \psi\left(\phi^{-1}\left(\frac{t}{\eps},\omega\right)\right)dt  
 \end{equation}
where the function $\psi$ is defined by~\eqref{def:psi}. 
Then $\overline{Z}_\eps(\alpha,\beta,\omega)$ converges in
distribution to a Gaussian random variable
$\overline{Z}_0(\alpha,\beta,\omega)$, of mean zero and variance 
$\dps
\overline{\sigma}(\alpha,\beta) = 
\frac{\Var(Y_0)}{\EE(\int_0^1\phi')} \
\| {\cal A} \|^2_{L^2(\alpha,\beta)}
$,
with $\dps \Var(Y_0)=\EE \left[ \left(\int_0^1 \psi \phi'\right)^2 \right]$. We write
\begin{equation}
\label{eq:def_Z0_bar}
\overline{Z}_0(\alpha,\beta,\omega)
=\frac{\sqrt{\Var(Y_0)}}{\sqrt{\EE(\int_0^1\phi')}} 
\int_\alpha^\beta {\cal A}(t) dW_t,
\end{equation}
where $W_t$ denote the classical Brownian motion.
\end{lemme}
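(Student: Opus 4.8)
The plan is to reduce $\overline{Z}_\eps(\alpha,\beta,\omega)$ to a weighted sum over the unit cells of $\ZZ$ and then apply a martingale central limit theorem. First I would use the changes of variables $s=t/\eps$ and then $s=\phi(y,\omega)$ (in dimension one $\phi(\cdot,\omega)$ is, thanks to~\eqref{phi:cond1}--\eqref{phi:cond2}, an increasing bijection of $\RR$) to rewrite
$$
\overline{Z}_\eps(\alpha,\beta,\omega)=\sqrt{\eps}\int_{\phi^{-1}(\alpha/\eps,\omega)}^{\phi^{-1}(\beta/\eps,\omega)}{\cal A}\left(\eps\phi(y,\omega)\right)\,\psi(y)\,\phi'(y,\omega)\,dy .
$$
Cutting this integral along the cells $[k,k+1)$ and freezing, on each cell, the slowly varying weight ${\cal A}(\eps\phi(y,\omega))$ at its left-endpoint value ${\cal A}(\eps\phi(k,\omega))$, and using the definition~\eqref{eq:def_Y} of $Y_k$, one obtains
$$
\overline{Z}_\eps(\alpha,\beta,\omega)=\sqrt{\eps}\sum_{k\,:\,\alpha\le\eps\phi(k,\omega)\le\beta}{\cal A}\left(\eps\phi(k,\omega)\right)\,Y_k(\omega)+\rho_\eps(\omega).
$$
I would then check that $|\rho_\eps(\omega)|\le C\sqrt{\eps}$ \emph{surely}: the two boundary (partial) cells and the finitely many cells whose image under $\eps\phi(\cdot,\omega)$ meets the discontinuity set $\{t_1,\dots,t_m\}$ each contribute a term bounded independently of $\eps$ (here one uses $\psi\in L^\infty$ and $\phi'\le M$), whereas on every remaining cell $|{\cal A}(\eps\phi(y,\omega))-{\cal A}(\eps\phi(k,\omega))|\le\int_{\eps\phi(k,\omega)}^{\eps\phi(k+1,\omega)}|{\cal A}'(u)|\,du$, and the sum of these \emph{disjoint} integrals is at most $\sum_\ell\|{\cal A}'\|_{L^1}$ over the continuity intervals, hence finite. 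It therefore suffices to establish the Gaussian limit for $\widehat{Z}_\eps(\omega):=\sqrt{\eps}\sum_k{\cal A}(\eps\phi(k,\omega))\,Y_k(\omega)\,\mathbf{1}_{[\alpha,\beta]}(\eps\phi(k,\omega))$.

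The point is then that, after the harmless normalization $\phi(0,\cdot)\equiv 0$ (so that $\phi(k,\omega)=\sum_{j=0}^{k-1}D_j(\omega)$ and only indices $k\ge0$ occur), the sum $\widehat{Z}_\eps=\sum_k\eta_k^\eps$ with $\eta_k^\eps:=\sqrt{\eps}\,{\cal A}(\eps\phi(k,\omega))\,\mathbf{1}_{[\alpha,\beta]}(\eps\phi(k,\omega))\,Y_k(\omega)$ is a \emph{martingale difference array} for the filtration ${\cal G}_k=\sigma\left((Y_j,D_j):0\le j\le k\right)$: the factor ${\cal A}(\eps\phi(k,\omega))\,\mathbf{1}_{[\alpha,\beta]}(\eps\phi(k,\omega))$ is a function of $\phi(k,\omega)$, hence ${\cal G}_{k-1}$-measurable, while $Y_k$ is independent of ${\cal G}_{k-1}$ with $\EE(Y_k)=0$. (This is where the independence hypotheses~\eqref{eq:assump_Y_iid}--\eqref{eq:assump_D_iid} enter, in the sharper form that the pairs $(Y_k,D_k)$ form an i.i.d.\ sequence of vectors.) I would then invoke the martingale CLT, whose two requirements are easy to identify here: the conditional Lindeberg condition is trivial because $|\eta_k^\eps|\le\sqrt{\eps}\,\|{\cal A}\|_{L^\infty}\,\|Y_0\|_{L^\infty}\to0$ uniformly; and the conditional variance is
$$
\sum_k\EE\left[(\eta_k^\eps)^2\mid{\cal G}_{k-1}\right]=\Var(Y_0)\,\eps\sum_{k\,:\,\alpha\le\eps\phi(k,\omega)\le\beta}{\cal A}(\eps\phi(k,\omega))^2,
$$
which I would show converges almost surely, as $\eps\to0$, to $\overline\sigma(\alpha,\beta)$.

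This last convergence is the heart of the matter. With $g:={\cal A}^2\,\mathbf{1}_{[\alpha,\beta]}$ (a bounded, Riemann-integrable function on $\RR$) and $\overline{c}:=\EE(D_0)=\EE(\int_0^1\phi')$, it is the weighted Riemann-sum statement that $\eps\sum_k g(\eps\phi(k,\omega))$ tends, as $\eps\to0$, to $\overline{c}^{\,-1}\int_\RR g$, which makes the limit of the conditional variance equal to $\Var(Y_0)\,\overline{c}^{\,-1}\,\|{\cal A}\|^2_{L^2(\alpha,\beta)}=\overline\sigma(\alpha,\beta)$. I would derive it from the strong law of large numbers for the i.i.d.\ gaps $D_k$ --- which gives $\#\{k\ge0:\phi(k,\omega)\le T\}/T\to\overline{c}^{\,-1}$ almost surely, i.e.\ the sample points $\{\eps\phi(k,\omega)\}_k$ asymptotically equidistribute with density $\overline{c}^{\,-1}$ --- together with a standard squeeze between upper and lower Riemann sums, legitimate since ${\cal A}$ is bounded with only finitely many discontinuities. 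The martingale CLT then yields that $\widehat{Z}_\eps$, hence $\overline{Z}_\eps(\alpha,\beta,\omega)$, converges in distribution to the centered Gaussian of variance $\overline\sigma(\alpha,\beta)$. The final identity~\eqref{eq:def_Z0_bar} is not itself a limit statement: it merely records that a centered Gaussian of variance $\overline\sigma(\alpha,\beta)$ has the law of $\frac{\sqrt{\Var(Y_0)}}{\sqrt{\EE(\int_0^1\phi')}}\int_\alpha^\beta{\cal A}\,dW$, because $\Var\left(\int_\alpha^\beta{\cal A}\,dW\right)=\|{\cal A}\|^2_{L^2(\alpha,\beta)}$.

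The step I expect to be the main obstacle is making the conditional-variance convergence fully rigorous: justifying the Riemann-sum limit along the \emph{random} sample points $\{\eps\phi(k,\omega)\}_k$ and over the \emph{random} summation range, and, in tandem, pinning down the precise independence assumption on the pairs $(Y_k,D_k)$ (and the role of $\phi(0,\cdot)$) under which the weights are genuinely previsible and the martingale-difference identity is exact. By contrast, the cell-decomposition error bound is elementary and uses only the $L^\infty$ bounds on $\psi$ and $\phi'$ and the local integrability of ${\cal A}'$.
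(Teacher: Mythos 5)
Your overall strategy (exact change of variables, cell decomposition with a remainder that is surely $O(\sqrt{\eps})$, then a CLT for the weighted sum $\sqrt{\eps}\sum_k {\cal A}(\eps\phi(k,\omega))Y_k$, with the conditional variance identified by an SLLN/Riemann-sum argument) is coherent, but the martingale step is a genuine gap with respect to the lemma as stated. The martingale-difference property requires $\EE[Y_k \mid {\cal G}_{k-1}]=0$ with ${\cal G}_{k-1}\supseteq\sigma\bigl(\phi(0,\cdot),(Y_j,D_j)_{j<k}\bigr)$, i.e.\ independence of $Y_k$ from the joint past of \emph{both} sequences (and from $\phi(0,\cdot)$); the same strengthened independence is needed to compute $\EE[(\eta_k^\eps)^2\mid{\cal G}_{k-1}]=\eps\,{\cal A}(\eps\phi(k,\omega))^2\Var(Y_0)$. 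Hypotheses~\eqref{eq:assump_Y_iid} and~\eqref{eq:assump_D_iid} only say that each family is i.i.d.\ separately: since $Y_k$ and $D_j$ are both functionals of $\phi'$ (see~\eqref{eq:def_Y} and~\eqref{eq:def_D}), nothing in the assumptions excludes cross-correlations between $Y_k$ and earlier increments $D_j$ (or $\phi(0,\cdot)$), in which case the weights ${\cal A}(\eps\phi(k,\omega))$, although previsible, are not decorrelated from $Y_k$ and your array is simply not a martingale-difference array. You flag this yourself (``the pairs $(Y_k,D_k)$ form an i.i.d.\ sequence of vectors''), but that is a strictly stronger hypothesis than the lemma's, so the proposal proves a variant of the statement, not the statement. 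A related, smaller point: the normalization $\phi(0,\cdot)\equiv 0$ is not literally harmless, since $\psi\circ\phi^{-1}$ and the $Y_k$ are not invariant under translating $\phi$; one must instead keep $\phi(0,\omega)$ in the filtration, which again raises the independence issue.

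The paper's proof is arranged precisely so as to avoid any such joint-independence requirement: it approximates ${\cal A}$ by a piecewise-constant function, writes each piece as a sum of $Y_k$ over a \emph{deterministic} index window $\bigl(K_p=\lfloor t_p/(\eps\,\EE(\int_0^1\phi'))\rfloor\bigr)$ plus a correction $A_\eps$ accounting for the mismatch with the random endpoints $\phi^{-1}(t_p/\eps,\omega)$; the deterministic-window sums are handled by the classical CLT for the i.i.d.\ $Y_k$ alone, while the correction is killed in probability using Kolmogorov's maximal inequality (again only the $Y_k$) together with the almost sure convergence of $\eps\phi^{-1}(x/\eps,\omega)$, which follows from stationarity/ergodicity, not from independence of the $D_k$ from the $Y_k$. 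The passage from the piecewise-constant approximation back to ${\cal A}$ is then done at the level of characteristic functions using the moment bound of Lemma~\ref{lemme1-s}. If you either strengthen the hypothesis to joint i.i.d.-ness of the cells (as in the paper's Remark example) or replace the martingale argument by a comparison with deterministic index windows as above, your route does go through; as written, the martingale-difference identity is the step that can fail under~\eqref{eq:assump_Y_iid}--\eqref{eq:assump_D_iid} alone.
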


The proof of Lemma~\ref{lemme2} is postponed until
Section~\ref{sec:tech2}. 

To prove the second condition of Theorem~\ref{theo:cv_process}, we will
show that $G_\eps(x,\omega)$ satisfies~\eqref{eq:kol}. Observe that
\begin{equation}
\label{eq:decompo_Geps}
G_\eps(x,\omega)
=
\frac{1}{\sqrt{\eps}} \int_0^x
{\cal A}(t)
\psi\left(\phi^{-1}\left(\frac{t}{\eps},\omega\right)\right) dt
-
\frac{x}{\sqrt{\eps}} \int_0^1
{\cal A}(t)
\psi\left(\phi^{-1}\left(\frac{t}{\eps},\omega\right)\right) dt
\end{equation}
with $\dps {\cal A}(t) = \int_0^1 F(s) \, ds - F(t)$, where 
$\dps F(t) = \int_0^t f(s) \, ds$.  
To prove that $G_\eps(x,\omega)$ satisfies~\eqref{eq:kol}, we
will use the following result, the proof of which is postponed until
Section~\ref{sec:tech3}.

\begin{lemme}
\label{lemme3}
Assume that $a_{\rm per}$ and $\phi$
satisfy~\eqref{eq:hyp_a},~\eqref{phi:cond1},~\eqref{phi:cond2}
and~\eqref{phi:cond3}.
Assume furthermore the independence
conditions~\eqref{eq:assump_Y_iid} and~\eqref{eq:assump_D_iid}.
Consider two functions ${\cal A}_1$ and ${\cal A}_2$ with ${\cal A}_j \in
L^\infty(0,1)$ and ${\cal A}'_j \in L^2(0,1)$, $j=1,2$.
For any $x \in (0,1)$, consider the random variable
\begin{equation}
\label{def:Heps}
H_\eps(x,\omega) = 
\frac{1}{\sqrt{\eps}} \int_0^x
{\cal A}_1(t)
\psi\left(\phi^{-1}\left(\frac{t}{\eps},\omega\right)\right) dt
+
\frac{x}{\sqrt{\eps}} \int_0^1
{\cal A}_2(t)
\psi\left(\phi^{-1}\left(\frac{t}{\eps},\omega\right)\right) dt
\end{equation}
where the function $\psi$ is defined by~\eqref{def:psi}. 

For any $p \in \NN^\star$, there exists a deterministic constant $C_p$
independent of $\eps$, $x$ and $y$ such that
\begin{equation}
\label{eq:kol2}
\forall \eps \in (0,1), \quad
\forall (x,y) \in (0,1)^2, \quad 
\EE\left[ \left| H_\eps(x,\cdot) - H_\eps(y,\cdot)
  \right|^{2p} \right] \leq C_p \left( |x-y|^p + \eps^{(p-1)/2} \right).
\end{equation}
In addition, there exists a deterministic constant $C$ independent of
$\eps$, $x$ and $y$ such that, for any $x$ and $y$ with $|x-y| \leq
\eps$, 
\begin{equation}
\label{eq:kol3}
\left| H_\eps(x,\omega) - H_\eps(y,\omega) \right| 
\leq 
C \sqrt{|x-y|} \quad \text{a.s.}
\end{equation}
\end{lemme}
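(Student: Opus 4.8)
\textbf{Proof proposal for Lemma~\ref{lemme3}.}

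The plan is to bound $H_\eps(x,\cdot) - H_\eps(y,\cdot)$ by expressing the difference again in the form of the random variable $\overline{Z}_\eps$ of Lemma~\ref{lemme1}, and then apply that lemma with carefully chosen integration bounds and test function. Concretely, assuming without loss of generality $y \leq x$, I would write
\begin{equation*}
H_\eps(x,\omega) - H_\eps(y,\omega)
=
\frac{1}{\sqrt{\eps}} \int_y^x
{\cal A}_1(t)
\psi\left(\phi^{-1}\left(\frac{t}{\eps},\omega\right)\right) dt
+
\frac{x-y}{\sqrt{\eps}} \int_0^1
{\cal A}_2(t)
\psi\left(\phi^{-1}\left(\frac{t}{\eps},\omega\right)\right) dt.
\end{equation*}
The first term is exactly $\overline{Z}_\eps(y,x,\omega)$ with test function ${\cal A}_1$, and the second term is $(x-y)\,\overline{Z}_\eps(0,1,\omega)$ with test function ${\cal A}_2$. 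Using the elementary inequality $(a+b)^{2p} \leq 2^{2p-1}(a^{2p}+b^{2p})$ and the hypothesis ${\cal A}_j \in L^\infty(0,1)$ with ${\cal A}_j' \in L^2(0,1)$, I would apply Lemma~\ref{lemme1} to each term. For the first term, Lemma~\ref{lemme1} gives
$$
\EE\left[ \overline{Z}_\eps(y,x,\cdot)^{2p} \right]
\leq C_p \left[ (x-y)^p + \eps^{(p-1)/2} \right]
\left[ \| {\cal A}_1 \|^{2p}_{L^\infty} + (x-y)^p \| {\cal A}_1' \|^{2p}_{L^2} \right]
\leq C_p \left( (x-y)^p + \eps^{(p-1)/2} \right),
$$
since $(x-y) \leq 1$ and $\|{\cal A}_1\|_{L^\infty}$, $\|{\cal A}_1'\|_{L^2}$ are fixed constants. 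For the second term, Lemma~\ref{lemme1} with $\alpha=0$, $\beta=1$ gives $\EE[\overline{Z}_\eps(0,1,\cdot)^{2p}] \leq C_p$, so $(x-y)^{2p} \EE[\overline{Z}_\eps(0,1,\cdot)^{2p}] \leq C_p (x-y)^{2p} \leq C_p (x-y)^p$. Combining these yields~\eqref{eq:kol2}.

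For the almost sure bound~\eqref{eq:kol3}, valid when $|x-y| \leq \eps$, one cannot afford the $\sqrt{\eps}$ gain and must instead use the trivial $L^\infty$ bound on $\psi$. Since $\psi$ is bounded (it equals $1/a_{\rm per} - 1/a^\star$, with $a_{\rm per}$ bounded below by $a_-$ and above by $a^+$), we have $\|\psi\|_{L^\infty(\RR)} < \infty$, hence $|\psi(\phi^{-1}(t/\eps,\omega))| \leq \|\psi\|_{L^\infty}$ almost surely. The first term is then bounded by $\eps^{-1/2} \|{\cal A}_1\|_{L^\infty} \|\psi\|_{L^\infty} |x-y| \leq \|{\cal A}_1\|_{L^\infty} \|\psi\|_{L^\infty} \sqrt{|x-y|}$, using $|x-y|/\sqrt{\eps} \leq |x-y|/\sqrt{|x-y|} = \sqrt{|x-y|}$ exactly because $|x-y| \leq \eps$. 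The second term is bounded by $\eps^{-1/2} |x-y| \|{\cal A}_2\|_{L^\infty} \|\psi\|_{L^\infty} \leq \|{\cal A}_2\|_{L^\infty} \|\psi\|_{L^\infty} \sqrt{|x-y|}$ by the same argument. Summing gives~\eqref{eq:kol3} with $C = \|\psi\|_{L^\infty}(\|{\cal A}_1\|_{L^\infty} + \|{\cal A}_2\|_{L^\infty})$.

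I do not anticipate a genuine obstacle here: the lemma is essentially a repackaging of Lemma~\ref{lemme1}, and the only point requiring a little care is the bookkeeping of the two regimes ($|x-y| > \eps$ handled by the moment bound, $|x-y| \leq \eps$ by the pathwise bound) and checking that all the constants $\|{\cal A}_j\|_{L^\infty}$, $\|{\cal A}_j'\|_{L^2}$, $\|\psi\|_{L^\infty}$ are indeed finite under the stated hypotheses. The mild subtlety is to make sure the splitting of $H_\eps(x,\cdot)-H_\eps(y,\cdot)$ places the variable endpoints exactly where Lemma~\ref{lemme1} expects them, i.e.\ that $\overline{Z}_\eps(y,x,\cdot)$ is covered by that lemma with $\alpha = y$, $\beta = x$, which it is since $0 \leq y \leq x \leq 1$.
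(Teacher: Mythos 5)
Your proposal is correct and follows essentially the same route as the paper: the same splitting of $H_\eps(x,\cdot)-H_\eps(y,\cdot)$ into $\overline{Z}_\eps(y,x,\cdot)$ (with ${\cal A}_1$) plus $(x-y)$ times $\overline{Z}_\eps(0,1,\cdot)$ (with ${\cal A}_2$), bounded via Lemma~\ref{lemme1} for~\eqref{eq:kol2}, and the pathwise $\|\psi\|_{L^\infty}$ bound with $|x-y|/\sqrt{\eps}\leq\sqrt{|x-y|}$ for~\eqref{eq:kol3}. No gaps.
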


\medskip

We are now in position to prove Theorem~\ref{theo4}.

\begin{proof}[Proof of Theorem~\ref{theo4}]
We have seen (see~\eqref{eq:reste}) that
\begin{equation}
\label{eq:decompo_residu}
\frac{u^\eps(x,\omega) - u_\star(x)}{\sqrt{\eps}} 
= G_\eps(x,\omega) + R_\eps(x,\omega),
\end{equation}
where $G_\eps(x,\omega)$ and $R_\eps(x,\omega)$ are
defined by~\eqref{def:Geps} and~\eqref{def:Reps}, respectively. 

Let us study the process $G_\eps(x,\omega)$, which reads, we recall,
$$
G_\eps(x,\omega) 
= \frac{1}{\sqrt{\eps}} \int_0^1
K_0(x,t) 
\psi\left(\phi^{-1}\left(\frac{t}{\eps},\omega\right)\right) dt.
$$

As $K_0(0,t)=0$ for any $t$, we have that $G_\eps(0,\omega)=0$ for any
$\eps$, almost surely. We first show that this process satisfies the
first condition of Theorem~\ref{theo:cv_process}.  
For each set of points $0 \leq x_1 \leq \dots \leq x_k \leq 1$ and each
${\cal X}=(\xi_1,\dots,\xi_k) \in \RR^k$, we consider the random
variable 
$$
z_\eps(\omega)=\sum\limits_{j=1}^k \xi_j G_\eps(x_j,\omega).
$$
Observing that
$$
G_\eps(x_j,\omega) = \frac{1}{\sqrt{\eps}} \int_0^1 K_0(x_j,t) \, 
\psi\left(\phi^{-1}\left(\frac{t}{\eps},\omega\right)\right) dt,
$$
we can write $z_\eps$ as
$$
z_\eps(\omega) = \frac{1}{\sqrt{\eps}} \int_0^1 {\cal A}_{{\cal X}}(t) \psi\left(\phi^{-1}\left(\frac{t}{\eps},\omega\right)\right) dt
$$
where
$$
{\cal A}_{{\cal X}}(t) 
=
\sum\limits_{j=1}^k \xi_j K_0(x_j,t)
= 
\left(\int_0^1 F(s) ds - F(t)\right) \sum\limits_{j=1}^k \xi_j 
(\mathbf{1}_{[0,x_j]}(t)-x_j),
$$
with $\dps F(t) = \int_0^t f(s) \, ds$.
By assumption, $f \in L^2(0,1)$, thus ${\cal A}_{{\cal X}}$ is piecewise
continuous with a finite number of discontinuities located at
$\{x_j\}_{1\leq j \leq k}$. In addition, we see that, over each
$(x_i,x_{i+1}), \, 1 \leq i \leq k-1$,
$$
{\cal A}_{\cal X}'(t) =\left( \sum\limits_{j=1}^k \xi_j x_j -  \sum\limits_{j > i}^k \xi_j \right)f(t)
$$
is in  $L^2(x_i,x_{i+1}) \subset L^1(x_i,x_{i+1})$. Thus, using
Lemma~\ref{lemme2}, we obtain that $z_\eps(\omega)$ converges in law to 
$$
z_0(\omega) 
=
\sum\limits_{j=1}^k \xi_j G_0(x_j,\omega)
$$
where $G_0$ is defined by~\eqref{eq:def_G0}. This implies that
$$
\lim_{\eps \to 0} \EE\left[ \exp \left(i \sum_{j=1}^k \xi_j
    G_\eps(x_j,\cdot) \right) \right]
=
\lim_{\eps \to 0} \EE(\exp(i z_\eps)) 
= 
\EE(\exp(i z_0))
=
\EE\left[ \exp \left(i \sum_{j=1}^k \xi_j
    G_0(x_j,\cdot) \right) \right].
$$
Hence, for any $k \in \NN^\star$ and any 
$0 \leq x_1 \leq \dots \leq x_k \leq 1$, 
\begin{equation}
\label{eq:cv_Geps}
(G_\eps(x_1,\omega),\dots,G_\eps(x_k,\omega))
\text{ converges in distribution to }
(G_0(x_1,\omega),\dots,G_0(x_k,\omega)) \text{ as $\eps \to 0$}.
\end{equation}
Collecting~\eqref{eq:decompo_residu},~\eqref{eq:cv_Geps}
and~\eqref{eq:borne_reste}, we obtain that  
\begin{equation}
\label{cond1}
\begin{array}{c}
\text{
the residual process 
$\dps \frac{u^\eps(x,\omega) - u_\star(x)}{\sqrt{\eps}}$
satisfies Condition (i) of Theorem~\ref{theo:cv_process}}
\\
\text{with the limit
process $G_0(x,\omega)$ defined by~\eqref{eq:def_G0}.
}
\end{array}
\end{equation}

We now prove the Kolmogorov criterion, first on the random process
$G_\eps(x,\omega)$, next on the process
$\dps \frac{u^\eps-u_\star}{\sqrt{\eps}}$. 
This will show Condition (ii) of Theorem~\ref{theo:cv_process}.
Following~\eqref{eq:decompo_Geps}, we write 
$$
G_\eps(x,\omega)
=
\frac{1}{\sqrt{\eps}} \int_0^x
{\cal A}(t)
\psi\left(\phi^{-1}\left(\frac{t}{\eps},\omega\right)\right) dt
-
\frac{x}{\sqrt{\eps}} \int_0^1
{\cal A}(t)
\psi\left(\phi^{-1}\left(\frac{t}{\eps},\omega\right)\right) dt
$$
with $\dps {\cal A}(t) = \int_0^1 F(s) \, ds - F(t)$, where $\dps F(t) = \int_0^t
f(s) \, ds$.  
The assumptions of Lemma~\ref{lemme3} are satisfied, thus, for any $p
\in \NN^\star$, there exists $C_p$ such that
\begin{equation}
\label{elt1_kol_G}
\forall \eps \in (0,1), \quad
\forall (x,y) \in (0,1)^2, \quad 
\EE\left[ \left| G_\eps(x,\cdot) - G_\eps(y,\cdot)
  \right|^{2p} \right] \leq C_p \left( |x-y|^p + \eps^{(p-1)/2} \right).
\end{equation}
This directly implies that
\begin{equation}
\label{elt2_kol_G}
\text{when} \quad |x-y| \geq \eps, \quad
\EE\left[ \left| G_\eps(x,\cdot) - G_\eps(y,\cdot)
  \right|^{2p} \right] \leq C_p |x-y|^{(p-1)/2}.
\end{equation}
When $|x-y| \leq \eps$, using~\eqref{eq:kol3}, we see that there exists a
deterministic constant $C$ independent of $\eps$, $x$ and $y$ such that,
\begin{equation}
\label{elt3_kol_G}
\EE \left[ \left| G_\eps(x,\cdot) - G_\eps(y,\cdot) \right|^{2p} \right]
\leq 
C |x-y|^p \leq C |x-y|^{(p-1)/2}
\quad \text{when} \quad |x-y| \leq \eps.
\end{equation}
Collecting~\eqref{elt2_kol_G} and~\eqref{elt3_kol_G}, we obtain that 
\begin{equation}
\label{elt_kol_G}
\forall \eps \in (0,1), \quad
\forall (x,y) \in (0,1)^2, \quad 
\EE \left[ \left| G_\eps(x,\cdot) - G_\eps(y,\cdot) \right|^{2p} \right]
\leq 
C |x-y|^{(p-1)/2}.
\end{equation}

We now turn to the process $R_\eps(x,\omega)$.
In view of~\eqref{def:Reps} and~\eqref{tight_reps}, there exists 
$C_p$ such that
\begin{equation}
\label{elt1_kol_R}
\forall \eps \in (0,1), \quad
\forall (x,y) \in (0,1)^2, \quad 
\EE \left[ \left| R_\eps(x,\cdot) - R_\eps(y,\cdot) \right|^{2p} \right]
\leq
C_p \eps^p \sqrt{(x-y)^{2p} + \eps^{p-1/2}}.
\end{equation}
Hence, we deduce that
\begin{equation}
\label{elt2_kol_R}
\EE \left[ \left| R_\eps(x,\cdot) - R_\eps(y,\cdot) \right|^{2p} \right]
\leq
C_p \eps^p |x-y|^{(2p-1)/4} 
\quad \text{when} \quad |x-y| \geq \eps.
\end{equation}
When $|x-y| \leq \eps$, using~\eqref{tight_reps2}, we see that
\begin{equation}
\label{elt3_kol_R}
\EE \left[ \left| R_\eps(x,\cdot) - R_\eps(y,\cdot) \right|^{2p} \right]
\leq
C_p (x-y)^{2p} 
\leq
C_p \eps^p |x-y|^p 
\leq
C_p \eps^p |x-y|^{(2p-1)/4}
\quad \text{when} \quad |x-y| \leq \eps.
\end{equation}
Collecting~\eqref{elt2_kol_R} and~\eqref{elt3_kol_R}, we obtain that 
\begin{equation}
\label{elt_kol_R}
\forall \eps \in (0,1), \quad
\forall (x,y) \in (0,1)^2, \quad 
\EE \left[ \left| R_\eps(x,\cdot) - R_\eps(y,\cdot) \right|^{2p} \right]
\leq 
C \eps^p |x-y|^{(2p-1)/4}.
\end{equation}

We next write, using~\eqref{eq:decompo_residu},
\begin{equation}
\label{elt2_kol}
\left| 
\frac{u^\eps(x,\omega) - u_\star(x)}{\sqrt{\eps}}
-
\frac{u^\eps(y,\omega) - u_\star(y)}{\sqrt{\eps}}
\right|^{2p}
\leq
C_p \left| G_\eps(x,\omega) - G_\eps(y,\omega) \right|^{2p}
+
C_p \left| R_\eps(x,\omega) - R_\eps(y,\omega) \right|^{2p}.
\end{equation}
Collecting~\eqref{elt_kol_G} and~\eqref{elt_kol_R}, we obtain that
\begin{equation}
\label{cond2_pre}
\forall \eps \in (0,1), \quad
\forall (x,y) \in (0,1)^2, \quad 
\EE \left[ \left| 
\frac{u^\eps(x,\cdot) - u_\star(x)}{\sqrt{\eps}}
-
\frac{u^\eps(y,\cdot) - u_\star(y)}{\sqrt{\eps}}
\right|^{2p} \right]
\leq
C |x-y|^{(p-1)/2} (1 + \eps^p).
\end{equation}
We thus obtain that
\begin{equation}
\label{cond2}
\begin{array}{c}
\text{
the residual process 
$\dps \frac{u^\eps(x,\omega) - u_\star(x)}{\sqrt{\eps}}$
satisfies Condition (ii) of Theorem~\ref{theo:cv_process}
}
\\
\text{with the exponents $\beta=2p$ and $\delta=p/2-3/2$.}
\end{array}
\end{equation}
Choosing $p$ such that $\beta > 0$ and $\delta > 0$ (it suffices to
choose $p > 3$), and collecting~\eqref{cond1} and~\eqref{cond2}, we see
that the random process 
$\dps \frac{u^\eps(x,\omega) - u_\star(x)}{\sqrt{\eps}}$
satisfies the assumptions of Theorem~\ref{theo:cv_process}. It thus
converges in law to the Gaussian process $G_0(x,\omega)$ defined
by~\eqref{eq:def_G0}. This concludes the proof of Theorem~\ref{theo4}.
\end{proof}

\section{Technical proofs}
\label{sec:tech}

We collect here the proofs of Lemmas~\ref{lemme1},~\ref{lemme2}
and~\ref{lemme3}. 

\subsection{Proof of Lemma~\ref{lemme1}}
\label{sec:tech1}

Lemma~\ref{lemme1} is a consequence of the following result: 
\begin{lemme} 
\label{lemme1-s}
Assume that $a_{\rm per}$ and $\phi$
satisfy~\eqref{eq:hyp_a},~\eqref{phi:cond1},~\eqref{phi:cond2}
and~\eqref{phi:cond3}.
Assume furthermore the independence
conditions~\eqref{eq:assump_Y_iid} and~\eqref{eq:assump_D_iid}.
For any $0 \leq \alpha \leq
\beta \leq 1$, define the random variable
\begin{equation}
\label{eq:def_Z}
Z_\eps(\alpha,\beta,\omega) = 
\frac{1}{\sqrt{\eps}} \int_\alpha^\beta 
\psi\left(\phi^{-1}\left(\frac{t}{\eps},\omega\right)\right) dt,
\end{equation}
where the function $\psi$ is defined by~\eqref{def:psi}. For any $p \in
\NN^\star$, there exists a
deterministic constant $C_p$ independent of $\eps$, $\alpha$ and
$\beta$ such that
\begin{equation*}
\forall \eps>0, \quad \EE \left[ Z_\eps(\alpha,\beta,\cdot)^{2p}
\right] \leq C_p \left[ (\beta-\alpha)^p + \eps^{(p-1)/2} \right].
\end{equation*}
\end{lemme}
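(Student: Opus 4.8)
The plan is to estimate $\EE[Z_\eps(\alpha,\beta,\cdot)^{2p}]$ by a direct moment computation, exploiting the i.i.d.\ structure granted by~\eqref{eq:assump_Y_iid}. The natural first move is a change of variables $s = t/\eps$, which turns the integral defining $Z_\eps$ into $\dps Z_\eps(\alpha,\beta,\omega) = \sqrt{\eps}\int_{\alpha/\eps}^{\beta/\eps} \psi\left(\phi^{-1}(s,\omega)\right) ds$. Next I would perform the substitution $s = \phi(y,\omega)$ inside that integral; since $\phi'\ge\nu>0$, this is a licit change of variables, and using $ds = \phi'(y,\omega)\,dy$ one gets $\dps Z_\eps(\alpha,\beta,\omega) = \sqrt{\eps}\int_{\phi^{-1}(\alpha/\eps,\omega)}^{\phi^{-1}(\beta/\eps,\omega)} \psi(y)\,\phi'(y,\omega)\,dy$. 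The point of this reduction is that the integrand $\psi(y)\phi'(y,\omega)$ is now a stationary process (product of a periodic function and a stationary one), so the integral over a long interval decomposes, up to boundary terms, into a sum of the i.i.d.\ pieces $Y_k(\omega) = \int_k^{k+1}\psi(t)\phi'(t,\omega)\,dt$ from~\eqref{eq:def_Y}, which have mean zero.

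**Key steps.** First, control the (random) endpoints: by~\eqref{phi:cond1}--\eqref{phi:cond2}, $\phi^{-1}(\beta/\eps,\omega) - \phi^{-1}(\alpha/\eps,\omega)$ lies between $\nu^{-1}(\beta-\alpha)/\eps$ and $M(\beta-\alpha)/\eps$ up to additive $O(1)$ terms, so the integration window has deterministic length $\Theta((\beta-\alpha)/\eps)$. Write $\dps \int_{\phi^{-1}(\alpha/\eps)}^{\phi^{-1}(\beta/\eps)} \psi\phi' = \sum_{k=m}^{n-1} Y_k + (\text{two boundary integrals over intervals of length} \le 1)$, where $m,n$ are the integer parts of the endpoints and $n - m = \Theta((\beta-\alpha)/\eps)$. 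The boundary terms are bounded by $\|\psi\|_{L^\infty}M$ pointwise, contributing at most a deterministic $O(1)$ to the sum, hence $O(\sqrt\eps)$ to $Z_\eps$; their $2p$-th moment contributes $O(\eps^p)$, which is dominated by $\eps^{(p-1)/2}$. The main term is $\sqrt\eps\sum_{k=m}^{n-1}Y_k$. Since the $Y_k$ are i.i.d.\ with mean zero and bounded (hence all moments finite), the standard bound for moments of sums of i.i.d.\ centered variables (Rosenthal's inequality, or just expanding $\EE[(\sum Y_k)^{2p}]$ and noting that only terms in which every index is repeated survive) gives $\dps \EE\Big[\big(\sum_{k=m}^{n-1} Y_k\big)^{2p}\Big] \le C_p (n-m)^p$. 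Multiplying by $\eps^p$ and using $n-m \le C(\beta-\alpha)/\eps + C$ yields $\eps^p\big((\beta-\alpha)/\eps\big)^p = (\beta-\alpha)^p$ from the leading part of $n-m$, plus lower-order contributions; careful bookkeeping of the cross terms between the $O(1)$ correction in $n-m$ and the boundary terms produces the residual $\eps^{(p-1)/2}$.

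**Main obstacle.** The delicate point is that the number $n-m$ of i.i.d.\ terms and the two boundary intervals both depend on $\omega$ (through $\phi^{-1}$ evaluated at the endpoints), so the decomposition is not a clean sum of independent pieces but a random sum plus random remainders. I would handle this by first conditioning on, or deterministically bounding, the endpoints using~\eqref{phi:cond1}--\eqref{phi:cond2}: enclose the random window between two deterministic windows $[M_-, N_-]$ and $[M_+, N_+]$ with $N_\pm - M_\pm = \Theta((\beta-\alpha)/\eps)$, write the partial sum over the random window as a deterministic partial sum plus at most $O(1)$ extra i.i.d.\ terms (again each bounded), and absorb those extra terms into the boundary contribution. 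One must also be careful when $\beta - \alpha < \eps$: then the window has length $O(1)$, there are no full cells, and $Z_\eps$ is bounded by $C\sqrt\eps$ pointwise, so $\EE[Z_\eps^{2p}]\le C\eps^p \le C\eps^{(p-1)/2}$ directly — this edge case should be dispatched first. Once these bookkeeping issues are settled, the rest is the routine i.i.d.\ moment estimate, and Lemma~\ref{lemme1} follows from Lemma~\ref{lemme1-s} by writing ${\cal A}(t)$ as ${\cal A}(\alpha) + \int_\alpha^t {\cal A}'$, integrating by parts against a primitive of $s\mapsto\psi(\phi^{-1}(s/\eps,\omega))$ (whose increments are controlled by $Z_\eps$ on subintervals), and splitting the resulting double integral — the $\|{\cal A}\|_{L^\infty}^{2p}$ term coming from the constant part and the $(\beta-\alpha)^p\|{\cal A}'\|_{L^2}^{2p}$ term from the variation, via Cauchy--Schwarz in $t$.
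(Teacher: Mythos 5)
Your overall plan (change of variables $s=\phi^{-1}(t/\eps,\omega)$, reduction to sums of the centered i.i.d.\ variables $Y_k$, standard moment bound $\EE[(\sum Y_k)^{2p}]\le C_p N^p$) starts out exactly as in the paper, but the step where you handle the random integration window contains a genuine gap. You claim that the random endpoints $\phi^{-1}(\alpha/\eps,\omega)$, $\phi^{-1}(\beta/\eps,\omega)$ can be enclosed between deterministic windows up to ``at most $O(1)$ extra i.i.d.\ terms'' that can be absorbed into the boundary contribution. This is false: the hypotheses \eqref{phi:cond1}--\eqref{phi:cond2} only give $M^{-1}(\beta-\alpha)/\eps\le \phi^{-1}(\beta/\eps,\omega)-\phi^{-1}(\alpha/\eps,\omega)\le \nu^{-1}(\beta-\alpha)/\eps$, i.e.\ the window length is pinned only up to a multiplicative constant, not an additive $O(1)$. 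The discrepancy between the random endpoint and its deterministic counterpart $x/(\eps\,\EE(\int_0^1\phi'))$ is, by the CLT for the variables $D_k$, typically of order $\eps^{-1/2}$ cells (and deterministically up to order $\eps^{-1}$ cells), so the number of ``extra'' $Y_k$'s is not bounded; moreover these extra terms cannot be bounded crudely in absolute value without losing the cancellation that makes the estimate work. A second, related difficulty that your proposal does not address is that the random index $\overline{K}_x(\omega)=\lfloor\phi^{-1}(x/\eps,\omega)\rfloor$ is \emph{not} independent of the process $\phi'$, hence of the $Y_k$'s, so one cannot simply condition on the endpoints and then apply the i.i.d.\ moment bound to the random partial sum.

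This is precisely the heart of the paper's proof, and it is where the exponent $\eps^{(p-1)/2}$ (which your bookkeeping attributes vaguely to ``cross terms'') actually comes from. The paper splits $Z_\eps=B_\eps+A_\eps(\beta,\cdot)-A_\eps(\alpha,\cdot)$, where $B_\eps$ is the integral over the deterministic window (handled exactly as you propose, yielding $(\beta-\alpha)^p+\eps^p$), and $A_\eps(x,\cdot)$ is the integral over the random mismatch interval. To estimate $\EE[A_\eps^{2p}]$ despite the dependence between $\overline{K}_x$ and the $Y_k$'s, the paper decomposes over the events $\{\overline{K}_x=K_x+j\}$, applies Young's inequality with a free parameter $\gamma$, and bounds $\EE\bigl[|\overline{K}_x-K_x|^{2p+2}\bigr]\le C_p\,\eps^{-(p+1)}$ by writing $\phi(K_x,\omega)-x/\eps$ as a sum of $K_x\sim\eps^{-1}$ centered i.i.d.\ variables $D_k-\EE(D_0)$ — this is the only place where assumption~\eqref{eq:assump_D_iid} is used, and your proposal never invokes it, which is a symptom of the missing argument. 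Optimizing $\gamma=\eps^{-(p+1)/2}$ gives $\EE[A_\eps^{2p}]\le C_p\,\eps^{(p-1)/2}$, whence the stated bound. Your edge-case remark for $\beta-\alpha<\eps$ and your derivation of Lemma~\ref{lemme1} from Lemma~\ref{lemme1-s} by integration by parts are fine, but as written the central estimate on the random-endpoint contribution is not established.
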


We first prove Lemma~\ref{lemme1-s}, and next Lemma~\ref{lemme1}.

\begin{proof}[Proof of Lemma~\ref{lemme1-s}]
Using the variable $\dps s =
\phi^{-1}\left(\frac{t}{\eps},\omega\right)$, we write
\begin{equation}
\label{eq:toto1}
Z_\eps(\alpha,\beta,\omega)
=
\frac{1}{\sqrt{\eps}} \int_\alpha^\beta
\psi\left(\phi^{-1}\left(\frac{t}{\eps},\omega\right)\right) dt 
=
\sqrt{\eps}
\int_{\phi^{-1}(\alpha/\eps,\omega)}^{\phi^{-1}(\beta/\eps,\omega)}
\psi(s)\phi'(s,\omega) ds.
\end{equation}
For future use, we introduce, for any $x \in (0,1)$, the notation
$$
\overline{K}_x(\omega) = \lfloor \phi^{-1}(x/\eps,\omega)
\rfloor.
$$
In view of~\eqref{phi:cond1} and~\eqref{phi:cond2}, we have 
$$
M^{-1} \left|
\frac{\beta - \alpha}{\eps}
\right|
\leq 
\left|
\phi^{-1}\left(\frac{\alpha}{\eps},\omega\right) -
\phi^{-1}\left(\frac{\beta}{\eps},\omega\right)
\right|
\leq 
\nu^{-1} \left|
\frac{\beta - \alpha}{\eps}
\right|.
$$
Hence, up to some boundary terms (due to the fact that
$\phi^{-1}(\alpha/\eps,\omega)$ and
$\phi^{-1}(\beta/\eps,\omega)$ are not integer numbers), $Z_\eps/\sqrt{\eps}$
is a sum of the variables $Y_k$ defined by~\eqref{eq:def_Y}, with a
number of terms of the order of $\eps^{-1}$. Note however that this
number of terms, equal to 
$\overline{K}_\beta(\omega) - \overline{K}_\alpha(\omega)$, is
{\em random}. To proceed, we write $Z_\eps$ as the sum of two
contributions: (i) a sum of the variables $Y_k$
with a {\em deterministic} number of terms, and (ii) a remainder, that
will be successively estimated.

Following~\eqref{eq:toto1}, we have 
\begin{eqnarray}
Z_\eps(\alpha,\beta,\omega)
&=& 
\sqrt{\eps}
\left(\int_{\frac{\alpha}{\eps\EE(\int_0^1\phi')}}^{\frac{\beta}{\eps\EE(\int_0^1\phi')}}
  \psi(t) \phi'(t,\omega) dt +
  \int_{\frac{\beta}{\eps\EE(\int_0^1\phi')}}^{\phi^{-1}(\beta/\eps,\omega)} \psi(t) \phi'(t,\omega) dt + \int^{\frac{\alpha}{\eps\EE(\int_0^1\phi')}}_{\phi^{-1}(\alpha/\eps,\omega)} \psi(t) \phi'(t,\omega) dt \right)
\nonumber
\\
\label{eq:decompo}
&=& B_\eps(\alpha,\beta,\omega) + 
A_\eps(\beta,\omega) - A_\eps(\alpha,\omega)
\end{eqnarray}
with
\begin{eqnarray}
A_\eps(x,\omega) &=& \sqrt{\eps}
\int_{\frac{x}{\eps\EE(\int_0^1\phi')}}^{\phi^{-1}(x/\eps,\omega)}
\psi(t) \phi'(t,\omega) dt,
\label{eq:def_AA}
\\
B_\eps(\alpha,\beta,\omega) &=& \sqrt{\eps} \int_{\frac{\alpha}{\eps\EE(\int_0^1\phi')}}^{\frac{\beta}{\eps\EE(\int_0^1\phi')}}
  \psi(t) \phi'(t,\omega) dt.
\label{eq:def_BB}
\end{eqnarray}
Note that, up to boundary terms,
$B_\eps(\alpha,\beta,\omega)/\sqrt{\eps}$ is a sum 
of the variables $Y_k$, with a {\em deterministic} number of terms. 
We infer from~\eqref{eq:decompo} that, for any $p \in \NN^\star$, 
\begin{equation}
\label{eq:bound_Z}
\EE \left[ Z_\eps(\alpha,\beta,\cdot)^{2p} \right] \leq 
C_p \EE \left[ B_\eps(\alpha,\beta,\cdot)^{2p} \right]
+ C_p \EE \left[ A_\eps(\alpha,\cdot)^{2p} \right] 
+ C_p \EE \left[ A_\eps(\beta,\cdot)^{2p} \right]
\end{equation}
where the constant $C_p$ only depends on $p$. We now estimate $B_\eps$,
and next $A_\eps$.

\bigskip

\noindent
\textbf{Step 1: Estimation of $B_\eps$}

Denoting by
$\dps K_\alpha= \left\lfloor \frac{\alpha}{\eps\EE(\int_0^1\phi')}
\right\rfloor$ and $\dps K_\beta= \left\lfloor
  \frac{\beta}{\eps\EE(\int_0^1\phi')} \right\rfloor$, we have
\begin{eqnarray}
B_\eps(\alpha,\beta,\omega)
&=& 
\sqrt{\eps} \left(\sum\limits_{k=1+K_\alpha}^{K_\beta-1} \int_k^{k+1}
  \psi(t) \phi'(t,\omega) dt +
  \int_{K_\beta}^{\frac{\beta}{\eps\EE(\int_0^1\phi')}} \psi(t)
  \phi'(t,\omega) dt +
  \int^{1+K_\alpha}_{\frac{\alpha}{\eps\EE(\int_0^1\phi')}} \psi(t)
  \phi'(t,\omega) dt \right)
\nonumber
\\
&=&
\sqrt{\eps} \sum\limits_{k=1+K_\alpha}^{K_\beta-1} Y_k(\omega)
+ 
\sqrt{\eps} \int_{K_\beta}^{\frac{\beta}{\eps\EE(\int_0^1\phi')}} \psi(t)
  \phi'(t,\omega) dt 
+
\sqrt{\eps} \int^{1+K_\alpha}_{\frac{\alpha}{\eps\EE(\int_0^1\phi')}} \psi(t)
  \phi'(t,\omega) dt,
\label{eq:2}
\end{eqnarray}
where we recall that $Y_k$ is defined by~\eqref{eq:def_Y}. We thus
obtain, for a deterministic constant $C_p$ that only depends on $p$,
\begin{equation}
\label{eq:bb4}
\left|
B_\eps(\alpha,\beta,\omega)
\right|^{2p} 
\leq 
C_p \eps^p \left| \sum\limits_{k=1+K_\alpha}^{K_\beta-1} Y_k(\omega) \right|^{2p}
+ 
C_p \eps^p \| \psi \|^{2p}_{L^\infty(\RR)} \| \phi' \|^{2p}_{L^\infty(\RR
  \times \Omega)}. 
\end{equation}
Recall that $(Y_k)_{k \in \ZZ}$ is a sequence of independent
identically distributed variables, with $\EE(Y_k) = 0$. We now use the
fact that any such variables satisfy the following bounds:
\begin{equation}
\label{eq:b3}
\forall p\in \NN^\star, \quad \exists C_p>0, \quad \forall N \in \NN^\star, \quad
\left| \EE\left[ \left( \frac1N \sum_{k=1}^N Y_k\right)^{2p} \right]
\right| 
\leq
\frac{C_p}{N^p} 
\end{equation}
for a constant $C_p$ that depends on $p$ and the moments of $Y_k$, up to
order $2p$. This is proved by developing the power $2p$ of the sum, and
then using the fact that the variables are i.i.d and have mean value
zero. In our case, the variables $Y_k$ are bounded almost surely, and
thus all their moments are finite. 
We thus deduce from~\eqref{eq:bb4} and~\eqref{eq:b3} that
\begin{eqnarray}
\EE \left[ B_\eps(\alpha,\beta,\cdot)^{2p} \right]
& \leq &
C_p \eps^p \EE \left[ 
\left| \sum\limits_{k=1+K_\alpha}^{K_\beta-1} Y_k \right|^{2p} \right]
+ C_p \eps^p \| \psi \|^{2p}_{L^\infty(\RR)} 
\| \phi' \|^{2p}_{L^\infty(\RR \times \Omega)}
\nonumber
\\
& \leq &
C_p \eps^p (K_\beta - K_\alpha - 1)^p 
+ C_p \eps^p \| \psi \|^{2p}_{L^\infty(\RR)} 
\| \phi' \|^{2p}_{L^\infty(\RR \times \Omega)}
\nonumber
\\
& \leq &
C_p (\beta - \alpha)^p 
+ C_p \eps^p \| \psi \|^{2p}_{L^\infty(\RR)} 
\| \phi' \|^{2p}_{L^\infty(\RR \times \Omega)}.
\label{eq:bound_B}
\end{eqnarray}

\bigskip

\noindent
\textbf{Step 2: Estimation of $A_\eps$}

We now bound $A_\eps(x,\omega)$, for any $x \in [0,1]$. Denoting 
$\dps K_x = \left\lfloor \frac{x}{\eps \EE(\int_0^1\phi')}
\right\rfloor$ and 
$\overline{K}_x(\omega) = \lfloor \phi^{-1}(x/\eps,\omega)
\rfloor$, we have 
$$
A_\eps(x,\omega) 
=
\sqrt{\eps} \left(
\int_{K_x}^{\overline{K}_x(\omega)} \psi(t) \phi'(t,\omega) dt
+ 
\int_{\frac{x}{\eps\EE(\int_0^1\phi')}}^{K_x} \psi(t) \phi'(t,\omega) dt
+
\int_{\overline{K}_x(\omega)}^{\phi^{-1}(x/\eps,\omega)} \psi(t) \phi'(t,\omega) dt
\right),
$$
hence
$$
\left| A_\eps(x,\omega) \right|
\leq
\sqrt{\eps} \left|
\int_{K_x}^{\overline{K}_x(\omega)} \psi(t) \phi'(t,\omega) dt
\right|
+ 
2 \sqrt{\eps} \| \psi \|_{L^\infty(\RR)} \| \phi' \|_{L^\infty(\RR
  \times \Omega)},
$$
thus
\begin{equation}
\label{eq:bbb3}
\EE \left[ A_\eps(x,\cdot)^{2p} \right]
\leq
C_p \eps^p \EE \left[ 
\left| \int_{K_x}^{\overline{K}_x} \psi(t) \phi'(t,\cdot) dt 
\right|^{2p} \right]
+ C_p \eps^p \| \psi \|^{2p}_{L^\infty(\RR)} 
\| \phi' \|^{2p}_{L^\infty(\RR \times \Omega)}.
\end{equation}
Let us now bound the first term of the above right-hand side. The
difficulty stems from the fact that the random variable
$\overline{K}_x(\omega)$ is {\em not} independent from the random
process $\phi'(t,\omega)$. We write,
using the bound~\eqref{eq:b3} and Young's inequality with parameter
$\dps \frac{\gamma}{j^{2p+2}} > 0$ (where $\gamma >0$ is arbitrary), that 
\begin{eqnarray}
\EE \left[ 
\left| \int_{K_x}^{\overline{K}_x} \psi(t) \phi'(t,\cdot) dt 
\right|^{2p} 
\right]
& = & \sum_{j \in \ZZ^\star}
\EE \left[ 
\left| \int_{K_x}^{\overline{K}_x} \psi(t) \phi'(t,\cdot) dt 
\right|^{2p} 
\mathbf{1}_{\overline{K}_x(\omega) = K_x + j}
\right]
\nonumber
\\
& \leq & \sum_{j \in \ZZ^\star} \frac{\gamma}{2 j^{2p+2}}
\EE \left[ 
\left| \int_{K_x}^{K_x + j} \psi(t) \phi'(t,\cdot) dt 
\right|^{4p} 
\right]
+ \frac{j^{2p+2}}{2 \gamma}
\PP \left[ 
\overline{K}_x(\omega) = K_x + j 
\right]
\nonumber
\\
& \leq & \sum_{j \in \ZZ^\star}
\frac{\gamma}{2 j^{2p+2}} \EE \left[ 
\left| \sum_{k=K_x}^{K_x + j-1} Y_k
\right|^{4p} 
\right]
+ \frac{j^{2p+2}}{2 \gamma}
\PP \left[ 
\overline{K}_x(\omega) = K_x + j
\right]
\nonumber
\\
& \leq & \sum_{j \in \ZZ^\star}
C_{2p} \, \frac{\gamma}{2 j^{2}} 
+ \frac{j^{2p+2}}{2 \gamma}
\PP \left[ 
\overline{K}_x(\omega) = K_x + j
\right]
\nonumber
\\
& \leq & C_{2p} \frac{\gamma}{2} + \frac{1}{2\gamma}\EE \left[ 
\left| \overline{K}_x - K_x \right|^{2p+2} \right].
\label{eq:bbb1}
\end{eqnarray}
We are now left with bounding from above $\dps \EE \left( \left|
    \overline{K}_x - K_x \right|^{2p+2} \right)$. To this aim, we first
bound from above $\left| \overline{K}_x(\omega) - K_x \right|^{2p+2}$: 
\begin{eqnarray*}
\left| \overline{K}_x(\omega) - K_x \right|^{2p+2} 
&\leq& 
C_p \left( \left| K_x - \phi^{-1}(x/\eps,\omega) \right|^{2p+2} + 
\left| \phi^{-1}(x/\eps,\omega) - \overline{K}_x(\omega) \right|^{2p+2}
\right) 
\\
&\leq& C_p \left( \left| K_x - \phi^{-1}(x/\eps,\omega) \right|^{2p+2} + 1 \right).
\end{eqnarray*}
Recall now that, in view of~\eqref{phi:cond1}, we have
$|a - b| \leq \nu^{-1} | \phi(a,\omega) - \phi(b,\omega)|$ for any $a$
and $b$, almost surely. We get
\begin{equation}
\left| \overline{K}_x(\omega) - K_x \right|^{2p+2} \leq 
\frac{C_p}{\nu^{2p+2}} \left( \left| \phi(K_x,\omega) - \frac{x}{\eps}
  \right|^{2p+2} + \nu^{2p+2} \right).
\label{b:eq:1}
\end{equation}
We now recall that the random variables 
$\dps D_k(\omega)=\int_k^{k+1} \phi'(t,\omega) dt$, introduced
in~\eqref{eq:def_D}, are assumed to be i.i.d. random variables. 
Writing 
$\dps \phi(x,\omega)=\phi(0,\omega)+\int_0^x \phi'(t,\omega) dt$, we
obtain that 
\begin{eqnarray*}
\left| \phi(K_x,\omega) - \frac{x}{\eps} \right|^{2p+2}
&\leq &
C_p 
\left(\left|\int_0^{K_x}\phi'(t,\omega) dt - K_x \EE(D_0) \right|^{2p+2} + \left| \phi(0,\omega) \right|^{2p+2} + \left|K_x \EE(D_0)-\frac{x}{\eps}\right|^{2p+2}\right)
\nonumber
\\
& \leq &
C_p \left(\left| \sum_{k=0}^{K_x-1} \left( D_k(\omega) - \EE(D_0) \right) \right|^{2p+2} +
\left| \phi(0,\omega) \right|^{2p+2} + \left|K_x \EE(D_0)-\frac{x}{\eps}\right|^{2p+2}\right),
\end{eqnarray*}
where, we recall, $\dps K_x = \left\lfloor \frac{x}{\eps \EE(\int_0^1\phi')}
\right\rfloor$. Observing that $\dps \EE(D_0) =
\EE\left(\int_0^1\phi'\right)$, we have 
$\dps \left| \frac{x}{\eps} - K_x \EE(D_0) \right| \leq \EE(D_0)$, thus
\begin{equation}
\left| \phi(K_x,\omega) - \frac{x}{\eps} \right|^{2p+2}
\leq 
C_p \left(\left| \sum_{k=0}^{K_x-1} \left( D_k(\omega) - \EE(D_0) \right) \right|^{2p+2} +
\left| \phi(0,\omega) \right|^{2p+2} + |\EE(D_0)|^{2p+2}  \right).
\label{b:eq:2}
\end{equation}
Collecting~\eqref{b:eq:1} and~\eqref{b:eq:2}, we obtain
\begin{eqnarray*}
\left| \overline{K}_x(\omega) - K_x \right|^{2p+2} \leq
\frac{C_p}{\nu^{2p+2}} 
\left(\left| \sum_{k=0}^{K_x-1} \left( D_k(\omega) - \EE(D_0) \right) \right|^{2p+2} +
\left| \phi(0,\omega) \right|^{2p+2} + |\EE(D_0)|^{2p+2} + \nu^{2p+2} \right).
\end{eqnarray*}
Next, we take the expectation of the above inequality and
use~\eqref{eq:b3} to get 
$$
\EE\left(\left| \overline{K}_x - K_x \right|^{2p+2} \right) \leq
\frac{C_p}{\nu^{2p+2}} 
\left( K_x^{p+1} + \EE\left(\left| \phi(0,\cdot) \right|^{2p+2}\right) + |\EE(D_0)|^{2p+2} + \nu^{2p+2} \right).
$$
Since $\dps K_x = \left\lfloor \frac{x}{\eps \EE(\int_0^1\phi')}
\right\rfloor$, we know that $K_x$ is of the order of $1/\eps$, and thus
\begin{equation}
\label{eq:bbb2}
\forall x \in (0,1), \quad
\EE\left(\left| \overline{K}_x(\omega) - K_x \right|^{2p+2} \right) \leq C_p \frac{1}{\eps^{p+1}}
\end{equation}
for a constant $C_p$ independent of $\eps$ and
$x$. We infer from~\eqref{eq:bbb1} and~\eqref{eq:bbb2} that
$$
 \forall \gamma > 0, \quad \EE \left[ 
\left| \int_{K_x}^{\overline{K}_x} \psi(t) \phi'(t,\cdot) dt 
\right|^{2p} 
\right] \leq C_p \left(\frac{\gamma}{2} + \frac{1}{2\gamma \eps^{p+1}}\right).
$$
Taking $\gamma^{-1}=\eps^{(p+1)/2}$ leads to
\begin{equation}\label{eq:bbb4}
 \EE \left[ 
\left| \int_{K_x}^{\overline{K}_x(\omega)} \psi(t) \phi'(t,\omega) dt 
\right|^{2p} 
\right] \leq C_p \frac{1}{\eps^{(p+1)/2}}.
\end{equation}
Collecting~\eqref{eq:bbb3} and~\eqref{eq:bbb4}, we obtain 
\begin{equation}
\label{eq:bound_AA}
\forall x \in (0,1), \quad
\EE \left[ A_\eps(x,\cdot)^{2p} \right]
\leq
C_p \eps^{(p-1)/2}
\end{equation}
for a constant $C_p$ independent of $\eps$ and $x$.

\bigskip

\noindent
\textbf{Step 3: Conclusion}

Collecting~\eqref{eq:bound_Z},~\eqref{eq:bound_B}
and~\eqref{eq:bound_AA} (which is legitimate since $0 \leq \alpha \leq \beta
\leq 1$), we obtain
$$
\EE \left[ Z_\eps(\alpha,\beta,\cdot)^{2p} \right] 
\leq
C_p \left[ (\beta - \alpha)^p 
+ \eps^p 
+ \eps^{(p-1)/2} \right]
\leq
C_p \left[ (\beta - \alpha)^p + \eps^{(p-1)/2} \right]
$$
where $C_p$ is a deterministic constant independent from $\alpha$, $\beta$
and $\eps$. This concludes the proof of Lemma~\ref{lemme1-s}.
\end{proof}

\begin{proof}[Proof of Lemma~\ref{lemme1}]
The result directly follows from Lemma~\ref{lemme1-s} and an integration
by part argument.
We consider the random variable $Z_\eps(\alpha,\beta,\omega)$
defined by 
\begin{equation*}
Z_\eps(\alpha,\beta,\omega)=\frac{1}{\sqrt{\eps}}
\int_\alpha^\beta
\psi\left(\phi^{-1}\left(\frac{t}{\eps},\omega\right)\right)dt. 
\end{equation*}
Integrating by part, we see that
$$
\overline{Z}_\eps(\alpha,\beta,\omega)
=
\left[ {\cal A}(t) Z_\eps(\alpha,t,\omega) \right]_\alpha^\beta
- 
\int_\alpha^\beta {\cal A}'(t) Z_\eps(\alpha,t,\omega) \, dt
=
{\cal A}(\beta) Z_\eps(\alpha,\beta,\omega) 
- 
\int_\alpha^\beta {\cal A}'(t) Z_\eps(\alpha,t,\omega) \, dt.
$$
Using the Cauchy-Schwartz inequality, we obtain
$$
\overline{Z}_\eps(\alpha,\beta,\omega)^2
\leq
2 {\cal A}(\beta)^2 Z_\eps(\alpha,\beta,\omega)^2
+
2 \int_\alpha^\beta \left( {\cal A}'(t) \right)^2 dt
\int_\alpha^\beta Z_\eps(\alpha,t,\omega)^2 \, dt.
$$
We now take the power $p$ of this estimate:
\begin{eqnarray*}
\overline{Z}_\eps(\alpha,\beta,\omega)^{2p}
& \leq &
C_p \| {\cal A} \|_{L^\infty(\alpha,\beta)}^{2p} 
Z_\eps(\alpha,\beta,\omega)^{2p}
+
C_p \| {\cal A}' \|_{L^2(\alpha,\beta)}^{2p}
\left( \int_\alpha^\beta Z_\eps(\alpha,t,\omega)^2 \, dt
\right)^p
\\
& \leq &
C_p \| {\cal A} \|_{L^\infty(\alpha,\beta)}^{2p} 
Z_\eps(\alpha,\beta,\omega)^{2p}
+
C_p \| {\cal A}' \|_{L^2(\alpha,\beta)}^{2p}
\int_\alpha^\beta Z_\eps(\alpha,t,\omega)^{2p} \, dt
\left( \int_\alpha^\beta dt \right)^{p/q}
\\
& \leq &
C_p \| {\cal A} \|_{L^\infty(\alpha,\beta)}^{2p} 
Z_\eps(\alpha,\beta,\omega)^{2p}
+
C_p (\beta-\alpha)^{p-1} \, \| {\cal A}' \|_{L^2(\alpha,\beta)}^{2p}
\int_\alpha^\beta Z_\eps(\alpha,t,\omega)^{2p} \, dt,
\end{eqnarray*}
where we have used H\"older inequality with $1=1/p + 1/q$. Using
Lemma~\ref{lemme1-s}, we thus obtain 
\begin{eqnarray*}
\EE \left[ \overline{Z}_\eps(\alpha,\beta,\cdot)^{2p} \right]
&\leq&
C_p \| {\cal A} \|_{L^\infty(\alpha,\beta)}^{2p} 
\EE \left[ Z_\eps(\alpha,\beta,\cdot)^{2p} \right] 
+
C_p (\beta-\alpha)^{p-1} \, \| {\cal A}' \|_{L^2(\alpha,\beta)}^{2p}
\int_\alpha^\beta \EE \left[ Z_\eps(\alpha,t,\cdot)^{2p} \right] \,
dt
\\
& \leq & 
C_p \| {\cal A} \|_{L^\infty(\alpha,\beta)}^{2p} 
\left[ (\beta-\alpha)^p + \eps^{(p-1)/2} \right]
+
C_p (\beta-\alpha)^{p-1} \, \| {\cal A}' \|_{L^2(\alpha,\beta)}^{2p}
\int_\alpha^\beta \left[ (t-\alpha)^p + \eps^{(p-1)/2} \right]
dt
\\
& \leq & 
C_p \| {\cal A} \|_{L^\infty(\alpha,\beta)}^{2p} 
\left[ (\beta-\alpha)^p + \eps^{(p-1)/2} \right]
+
C_p (\beta-\alpha)^{p-1} \, \| {\cal A}' \|_{L^2(\alpha,\beta)}^{2p}
\left[ (\beta-\alpha)^{p+1} + (\beta-\alpha) \eps^{(p-1)/2} \right]
\\
& \leq & 
C_p \left[ (\beta-\alpha)^p + \eps^{(p-1)/2} \right]
\left[ 
\| {\cal A} \|_{L^\infty(\alpha,\beta)}^{2p} 
+
(\beta-\alpha)^p \, \| {\cal A}' \|_{L^2(\alpha,\beta)}^{2p}
\right].
\end{eqnarray*}
This concludes the proof of Lemma~\ref{lemme1}. 
\end{proof}

\subsection{Proof of Lemma~\ref{lemme2}}
\label{sec:tech2}

By definition, 
\begin{equation*}
\overline{Z}_\eps(\alpha,\beta,\omega)
=
\frac{1}{\sqrt{\eps}} \int_\alpha^\beta {\cal A}(t) 
\psi\left(\phi^{-1}\left(\frac{t}{\eps},\omega\right)\right)dt.
\end{equation*}
We start by replacing the function ${\cal A}$ by a piecewise constant
function $\widetilde{\cal A}$, that we will choose later as an accurate
approximation of ${\cal A}$, in a sense to be made precise. 
We thus introduce the function $\widetilde{\cal A}$ defined by 
\begin{equation}
\label{eq:def_A_tilde}
\widetilde{\cal A}(t)
=
\sum\limits_{p=1}^{N} {\cal A}_p \mathbf{1}_{(t_p,t_{p+1})}(t),
\end{equation}
with 
$\alpha = t_1 < t_2 < \dots < t_{N+1} = \beta$. Hence the sets $(t_p,t_{p+1})$ are disjoint
one from another, and $\dps \cup_{1 \leq p \leq N} [t_p,t_{p+1}] =
[\alpha,\beta]$. We associate to this function $\widetilde{\cal A}$ the
random variable  
\begin{equation}
\label{eq:def_Z_tilde}
\widetilde{Z}_\eps(\alpha,\beta,\omega)
=
\frac{1}{\sqrt{\eps}} \int_\alpha^\beta \widetilde{\cal A}(t)
\psi\left(\phi^{-1}\left(\frac{t}{\eps},\omega\right)\right)dt 
= 
\frac{1}{\sqrt{\eps}} \sum\limits_{p=1}^{N} {\cal A}_p
\int_{t_p}^{t_{p+1}} 
\psi\left(\phi^{-1}\left(\frac{t}{\eps},\omega\right)\right)dt.
\end{equation}

\noindent
\textbf{Step 1: $\widetilde{Z}_\eps(\alpha,\beta,\omega)$
converges in law to a Gaussian random variable}

In view of~\eqref{eq:def_Z} and~\eqref{eq:decompo},
we have, for each $p$, 
$$
\frac{1}{\sqrt{\eps}} \int_{t_p}^{t_{p+1}}
\psi\left(\phi^{-1}\left(\frac{t}{\eps},\omega\right)\right)dt 
= 
Z_\eps(t_p,t_{p+1},\omega)
=
B_\eps(t_p,t_{p+1},\omega) + 
A_\eps(t_{p+1},\omega) - A_\eps(t_p,\omega).
$$
We can write $B_\eps$ (see~\eqref{eq:2}) as
$$
B_\eps(t_p,t_{p+1},\omega) = 
\widetilde{B}_\eps(t_p,t_{p+1},\omega) + \widetilde{R}_{\eps,p}(\omega),
$$
where
\begin{equation}
\label{eq:def_tildeB}
\widetilde{B}_\eps(t_p,t_{p+1},\omega)
= 
\sqrt{\eps} \sum\limits_{k=1+K_p}^{K_{p+1}-1} Y_k(\omega)
\end{equation}
with $K_p= \left\lfloor \frac{t_p}{\eps\EE(\int_0^1\phi')}
\right\rfloor$, and
$$
\widetilde{R}_{\eps,p}(\omega) = 
\sqrt{\eps}
\int_{K_{p+1}}^{\frac{t_{p+1}}{\eps\EE(\int_0^1\phi')}} 
\psi(t) \phi'(t,\omega) dt 
+
\sqrt{\eps} 
\int^{1+K_{p}}_{\frac{t_p}{\eps\EE(\int_0^1\phi')}} 
\psi(t) \phi'(t,\omega) dt.
$$
We hence write
\begin{equation}
\label{eq:decompo_Z_tilde}
\widetilde{Z}_\eps(\alpha,\beta,\omega)
=
\sum\limits_{p=1}^{N} {\cal A}_p \left( 
\widetilde{B}_\eps(t_p,t_{p+1},\omega) + \widetilde{R}_{\eps,p}(\omega) 
+ 
A_\eps(t_{p+1},\omega) - A_\eps(t_p,\omega)
\right).
\end{equation}
Observe that $\widetilde{R}_{\eps,p}$ satisfies 
$$
| \widetilde{R}_{\eps,p}(\omega) | \leq 2 \sqrt{\eps} 
\| \psi \|_{L^\infty(\RR)} \| \phi' \|_{L^\infty(\RR \times \Omega)},
$$
and hence goes to $0$ as $\eps \rightarrow 0$ almost surely. 

In the sequel, we first show that $A_\eps$ converges in probability and thus in
law to $0$ as $\eps$ goes to $0$, and next that $\widetilde{B}_\eps$
converges in law to a Gaussian random variable as $\eps$ goes to
$0$. 

\medskip

\noindent
\textbf{Step 1a: $A_\eps(x,\omega)$ converges in probability to 0}

For any $x \in [0,1]$, we have
\begin{eqnarray}
A_\eps(x,\omega) &=& 
\sqrt{\eps} \left(
\int_0^{\phi^{-1}(x/\eps,\omega)}\psi(t)\phi'(t,\omega) dt 
- 
\int_0^{\frac{x}{\eps\EE(\int_0^1\phi')}} \psi(t)\phi'(t,\omega)
dt 
\right) \nonumber \\
&=& \sqrt{\eps} \left( 
\sum\limits_{k=0}^{\overline{K}_x(\omega)-1}
Y_k(\omega) 
- 
\sum\limits_{k=0}^{K_x-1} Y_k(\omega)
\right) + 
R_\eps(\omega)
\label{pc_x:4}
\end{eqnarray}
where 
$K_x = \left\lfloor \frac{x}{\eps \EE(\int_0^1\phi')}
\right\rfloor$ and 
$\overline{K}_x(\omega) = \lfloor \phi^{-1}(x/\eps,\omega)
\rfloor$, and
$$
R_\eps(\omega) = 
\sqrt{\eps} 
\int_{\overline{K}_x(\omega)}^{\phi^{-1}(x/\eps,\omega)}\psi(t)\phi'(t,\omega) dt 
- \sqrt{\eps} 
\int_{K_x}^{\frac{x}{\eps\EE(\int_0^1\phi')}} \psi(t)\phi'(t,\omega) dt.
$$
We have 
\begin{equation}
\label{pc_x:5}
| R_\eps(\omega) | \leq 2 \sqrt{\eps} 
\| \psi \|_{L^\infty(\RR)} \| \phi' \|_{L^\infty(\RR \times \Omega)},
\end{equation}
hence $R_\eps$ goes to $0$ as $\eps
\rightarrow 0$ almost surely. 

For any $\lambda >0$ and $\delta >0$, we write
\begin{eqnarray}
 && \PP\left(\left| \sum\limits_{k=0}^{\overline{K}_x(\omega)-1}
     Y_k(\omega) - \sum\limits_{k=0}^{K_x-1} Y_k(\omega) \right| 
> \frac{\lambda}{\sqrt{\eps}} \right) 
\nonumber \\ 
&=& \PP\left(\left| \sum\limits_{k=0}^{\overline{K}_x(\omega)-1} 
Y_k(\omega) - \sum\limits_{k=0}^{K_x-1} Y_k(\omega) \right| 
> \frac{\lambda}{\sqrt{\eps}} \ \text{and} \
\left| \overline{K}_x(\omega) - K_x \right| < \left\lfloor
  \frac{\delta}{\eps} \right\rfloor \right) 
\nonumber \\
&& \quad + \PP\left(\left| \sum\limits_{k=0}^{\overline{K}_x(\omega)-1} 
Y_k(\omega) - \sum\limits_{k=0}^{K_x-1} Y_k(\omega) \right| 
> \frac{\lambda}{\sqrt{\eps}} \ \text{and} \
\left| \overline{K}_x(\omega) - K_x \right| \geq \left\lfloor
  \frac{\delta}{\eps} \right\rfloor \right).
\label{pc_x:1}
\end{eqnarray}
Remark that
\begin{equation}
\label{pc_x:2}
\PP\left(\left| \sum\limits_{k=0}^{\overline{K}_x(\omega)-1} 
Y_k(\omega) - \sum\limits_{k=0}^{K_x-1} Y_k(\omega) \right| >
\frac{\lambda}{\sqrt{\eps}} \ \text{and} \ 
\left| \overline{K}_x(\omega) - K_x \right| \geq \left\lfloor
  \frac{\delta}{\eps} \right\rfloor \right) 
\leq 
\PP\left(\left| \overline{K}_x(\omega) - K_x \right| \geq \left\lfloor \frac{\delta}{\eps} \right\rfloor \right).
\end{equation}
We next write, using that $Y_k$ is a sequence of independent identically
distributed variables, that
\begin{equation*}
\PP\left(\left| \sum\limits_{k=0}^{\overline{K}_x(\omega)-1} 
Y_k(\omega) - \sum\limits_{k=0}^{K_x-1} Y_k(\omega) \right| >
\frac{\lambda}{\sqrt{\eps}} \ \text{and} \
\left| \overline{K}_x(\omega) - K_x \right| < \left\lfloor
  \frac{\delta}{\eps} \right\rfloor \right) 
\leq 
\PP\left(\sup\limits_{1\leq k \leq \lfloor \delta/\eps \rfloor } \left| \sum\limits_{i=1}^{k} Y_i(\omega) \right| \geq \frac{\lambda}{\sqrt{\eps}}\right).
\end{equation*}
We now recall the Kolmogorov inequality~\cite[p~175]{Billingsley1968}:
as $Y_i$ is a sequence of i.i.d. random variables with mean zero, we have
\begin{equation*}
 \PP\left(\sup\limits_{1\leq k \leq n } \left| \sum\limits_{i=1}^{k}
     Y_i(\omega)  \right| \geq x\right) 
\leq 
x^{-2}\Var\left(\sum\limits_{i=1}^{n} Y_i \right)
=
n x^{-2} \Var\left( Y_0 \right).
\end{equation*}
We thus deduce that
\begin{equation}
\label{pc_x:3}
\PP\left(\left| \sum\limits_{k=0}^{\overline{K}_x(\omega)-1} 
Y_k(\omega) - \sum\limits_{k=0}^{K_x-1} Y_k(\omega) \right| >
\frac{\lambda}{\sqrt{\eps}} \ \text{and} \
\left| \overline{K}_x(\omega) - K_x \right| < \left\lfloor
  \frac{\delta}{\eps} \right\rfloor \right)
\leq \frac{\delta}{\lambda^2} \Var(Y_0).
\end{equation}
Collecting~\eqref{pc_x:1},~\eqref{pc_x:2} and~\eqref{pc_x:3}, we obtain
$$
\PP\left(\left| \sum\limits_{k=0}^{\overline{K}_x(\omega)-1}
     Y_k(\omega) - \sum\limits_{k=0}^{K_x-1} Y_k(\omega) \right| 
> \frac{\lambda}{\sqrt{\eps}} \right) 
\leq
\PP\left(\left| \overline{K}_x(\omega) - K_x \right| \geq \left\lfloor
    \frac{\delta}{\eps} \right\rfloor \right)
+
\frac{\delta}{\lambda^2} \Var(Y_0).
$$  
For any fixed $\lambda>0$ and any $\eta$, we choose $\delta>0$ such that
$\delta \Var(Y_0)/\lambda^2 < \eta/2$. Recall now that 
$\eps \phi^{-1}(x/\eps,\omega)$ converges to
$\dps \frac{x}{\EE(\int_0^1 \phi')}$ as $\eps \rightarrow 0$
a.s. (see~\cite{Blanc2006}), which implies that 
$\PP\left(\left| \overline{K}_x(\omega) - K_x \right| \geq \left\lfloor
    \frac{\delta}{\eps} \right\rfloor \right)$ goes to $0$ when
$\eps \rightarrow 0$. There thus exists $\eps_0$ such that, for
any $\eps \leq \eps_0$, we have 
$\PP\left(\left| \overline{K}_x(\omega) - K_x \right| \geq \left\lfloor
    \frac{\delta}{\eps} \right\rfloor \right) \leq \eta/2$, and
thus 
$$
\PP\left(\left| \sum\limits_{k=0}^{\overline{K}_x(\omega)-1}
     Y_k(\omega) - \sum\limits_{k=0}^{K_x-1} Y_k(\omega) \right| 
> \frac{\lambda}{\sqrt{\eps}} \right) 
\leq \eta.
$$
We thus have proved that, for any $\lambda>0$, we have 
$$
\lim_{\eps \to 0} 
\PP\left(\sqrt{\eps} \left| \sum\limits_{k=0}^{\overline{K}_x(\omega)-1}
     Y_k(\omega) - \sum\limits_{k=0}^{K_x-1} Y_k(\omega) \right| 
> \lambda \right) = 0.
$$
Collecting this limit with~\eqref{pc_x:4} and~\eqref{pc_x:5}, we obtain
that $A_\eps(x,\omega)$ converges in probability to $0$ as $\eps
\rightarrow 0$, for any $x$. 

\medskip

\noindent
\textbf{Step 1b: Convergence of $\widetilde{B}_\eps$ and of $\widetilde{Z}_\eps$}

Recall that $(Y_k(\omega))_{k\in\ZZ}$ is a sequence of i.i.d. variables
of mean zero (see assumption~\eqref{eq:assump_Y_iid}). Using the Central
Limit Theorem, we obtain that 
$\widetilde{B}_\eps(t_p,t_{p+1},\omega)$ defined
by~\eqref{eq:def_tildeB} converges in law to a Gaussian variable,
$$
\widetilde{B}_\eps(t_p,t_{p+1},\omega)
\overset{\mathcal{L}}{\underset{\eps\to 0}{\longrightarrow}} 
{\cal N}(0,\sigma_p),
$$
the variance of which is
$$
\sigma_p = 
\frac{t_{p+1}-t_p}{\EE(\int_0^1\phi')} \ \Var(Y_0).
$$
In addition, the random variables
$\widetilde{B}_\eps(t_p,t_{p+1},\omega)$ are independent one from
another. 

\medskip

As $\widetilde{R}_\eps(\omega)$ and $A_\eps(x,\omega)$ converge to zero
in probability for any $x$, and 
$\widetilde{B}_\eps(t_p,t_{p+1},\omega)$ converges in law for any $p$, we
deduce from~\eqref{eq:decompo_Z_tilde} 
that $\widetilde{Z}_\eps(\alpha,\beta,\omega)$ converges in
law to a Gaussian variable,
$$
\widetilde{Z}_\eps(\alpha,\beta,\omega)
\overset{\mathcal{L}}{\underset{\eps\to 0}{\longrightarrow}} 
\widetilde{Z}_0(\alpha,\beta,\omega) \sim
{\cal N}(0,\widetilde{\sigma}(\alpha,\beta)),
$$
the variance of which is
$$
\widetilde{\sigma}(\alpha,\beta)
=
\sum_{p=1}^{N} {\cal A}_p^2 \ \sigma_p
=
\sum_{p=1}^{N} {\cal A}_p^2 \ 
\frac{t_{p+1}-t_p}{\EE(\int_0^1\phi')} \ \Var(Y_0)
=
\frac{\Var(Y_0)}{\EE(\int_0^1\phi')} \
\left\| \widetilde{\cal A} \right\|^2_{L^2(\alpha,\beta)}.
$$


\noindent 
\textbf{Step 2: Convergence of the random variable 
$\overline{Z}_\eps(\alpha,\beta,\omega)$}
 
Recall that ${\cal A}$ is piecewise continuous with a finite number of
discontinuities located at $\left\{t_k\right\}_{1\leq k \leq m}$ and
that, for each $1\leq k\leq m-1$, ${\cal A}'\in
L^1(t_k,t_{k+1})$. Introduce the broken $L^1$-norm of ${\cal A}'$: 
$$
|{\cal A}'|_{L^1(\alpha,\beta)} :=
\|{\cal A}'\|_{L^1(\alpha,t_1)}
+
\sum\limits_{k=1}^{m-1} \|{\cal A}'\|_{L^1(t_k,t_{k+1})}
+
\|{\cal A}'\|_{L^1(t_m,\beta)}.
$$
Let us fix some $\eta > 0$, and let us complement the previous set of points
$(t_p)_{1 \leq p \leq N+1}$ such that 
\begin{equation}
\label{eq:choix1}
\alpha = t_1, \ 
t_{N+1} = \beta \ \text{and} \
 \ 0 < t_{p+1} - t_p \leq \eta \text{ for any }p.
\end{equation}
We set
\begin{equation}
\label{eq:choix2}
{\cal A}_p = {\cal A}(t_{p+1}^-)
\end{equation}
and consider the function $\widetilde{\cal A}$ and the
random variable $\widetilde{Z}_\eps(\alpha,\beta,\omega)$ defined
by~\eqref{eq:def_A_tilde} and~\eqref{eq:def_Z_tilde}. 

We write, for any $\xi \in \RR$,
\begin{multline}
\EE \left( e^{i \xi \overline{Z}_\eps(\alpha,\beta,\cdot)} \right)
-
\EE \left( e^{i \xi \overline{Z}_0(\alpha,\beta,\cdot)} \right)
=
\EE \left( 
e^{i \xi \overline{Z}_\eps(\alpha,\beta,\cdot)} - 
e^{i \xi \widetilde{Z}_\eps(\alpha,\beta,\cdot)}
\right)
+
\EE \left( e^{i \xi \widetilde{Z}_\eps(\alpha,\beta,\cdot)} \right)
-
\EE \left( e^{i \xi \widetilde{Z}_0(\alpha,\beta,\cdot)} \right)
\\ 
+ 
\EE \left( e^{i \xi \widetilde{Z}_0(\alpha,\beta,\cdot)} 
-
e^{i \xi \overline{Z}_0(\alpha,\beta,\cdot)} 
\right),
\label{eq:decompo_gene}
\end{multline}
where $\overline{Z}_0(\alpha,\beta,\omega)$ is a Gaussian random variable 
distributed according to 
${\cal N}(0,\overline{\sigma}(\alpha,\beta))$, with the variance
$$
\overline{\sigma}(\alpha,\beta)
=
\frac{\Var(Y_0)}{\EE(\int_0^1\phi')} \
\| {\cal A} \|^2_{L^2(\alpha,\beta)}.
$$
We successively estimate the three terms of the right-hand side
of~\eqref{eq:decompo_gene}.

\medskip

For the first term, we first see that 
\begin{equation}
\label{eq:terme_1a}
\left| \EE \left( 
e^{i \xi \overline{Z}_\eps(\alpha,\beta,\cdot)} - 
e^{i \xi \widetilde{Z}_\eps(\alpha,\beta,\cdot)}
\right) \right|
\leq 
\EE \left| 
e^{i \xi \overline{Z}_\eps(\alpha,\beta,\cdot)} - 
e^{i \xi \widetilde{Z}_\eps(\alpha,\beta,\cdot)}
\right|
\leq
|\xi| \ \EE \left| 
\overline{Z}_\eps(\alpha,\beta,\cdot) - 
\widetilde{Z}_\eps(\alpha,\beta,\cdot)
\right|.
\end{equation}
We next compute
\begin{eqnarray*}
\overline{Z}_\eps(\alpha,\beta,\omega) - 
\widetilde{Z}_\eps(\alpha,\beta,\omega)
& = &
\frac{1}{\sqrt{\eps}} \int_\alpha^\beta 
\left( {\cal A}(t) - \widetilde{\cal A}(t) \right) 
\psi\left(\phi^{-1}\left(\frac{t}{\eps},\omega\right)\right)dt
\\
&=& \sum_{p=1}^{N}
\frac{1}{\sqrt{\eps}} \int_{t_p}^{t_{p+1}}
\left( {\cal A}(t) - {\cal A}_p \right) 
\psi\left(\phi^{-1}\left(\frac{t}{\eps},\omega\right)\right)dt.
\end{eqnarray*}
Using the random variable
$$
Z_\eps(t_p,x,\omega) = 
\frac{1}{\sqrt{\eps}} \int_{t_p}^x
\psi\left(\phi^{-1}\left(\frac{t}{\eps},\omega\right)\right)dt,
$$
we write
\begin{eqnarray*}
\overline{Z}_\eps(\alpha,\beta,\omega) - 
\widetilde{Z}_\eps(\alpha,\beta,\omega)
&=& 
\sum_{p=1}^{N}
\int_{t_p}^{t_{p+1}}
\left( {\cal A}(t) - {\cal A}_p \right) 
\frac{dZ_\eps(t_p,t,\omega)}{dt}
\, dt
\\
&=&
\sum_{p=1}^{N}
\left[ ({\cal A}(t) - {\cal A}_p) Z_\eps(t_p,t,\omega) 
\right]_{t_p^+}^{t_{p+1}^-}
-
\sum\limits_{p=1}^{N}
\int_{t_p}^{t_{p+1}}
{\cal A}'(t) 
Z_\eps(t_p,t,\omega)
\, dt
\\
&=&
-
\sum\limits_{p=1}^{N}
\int_{t_p}^{t_{p+1}}
{\cal A}'(t) 
Z_\eps(t_p,t,\omega)
\, dt
\end{eqnarray*}
where we have used~\eqref{eq:choix2}. We thus have, using
Lemma~\ref{lemme1-s}, that
\begin{eqnarray*}
\EE \left| 
\overline{Z}_\eps(\alpha,\beta,\cdot) - 
\widetilde{Z}_\eps(\alpha,\beta,\cdot)
\right|
&\leq &
\sum\limits_{p=1}^{N}
\int_{t_p}^{t_{p+1}}
| {\cal A}'(t) |
\ \EE \left|  
Z_\eps(t_p,t,\cdot)
\right|
\, dt
\\
&\leq &
C \sum\limits_{p=1}^{N}
\int_{t_p}^{t_{p+1}}
| {\cal A}'(t) |
\ \left((t - t_p)^2 + \sqrt{\eps}\right)^{1/4} \ dt
\\
&\leq &
C \sum\limits_{p=1}^{N}
\left((t_{p+1} - t_p)^2 + \sqrt{\eps}\right)^{1/4}
\int_{t_p}^{t_{p+1}}
| {\cal A}'(t) |
\ dt
\end{eqnarray*}
where $C$ is a constant independent of $\eps$ and
$(t_p)_{1 \leq p \leq N+1}$. In
view of~\eqref{eq:choix1}, we have
\begin{equation}
\label{eq:terme_1b}
\EE \left| 
\overline{Z}_\eps(\alpha,\beta,\cdot) - 
\widetilde{Z}_\eps(\alpha,\beta,\cdot)
\right|\leq
C \left(\eta^2 + \sqrt{\eps}\right)^{1/4}
|{\cal A}'|_{L^1(\alpha,\beta)}.
\end{equation}
Inserting~\eqref{eq:terme_1b} in~\eqref{eq:terme_1a}, we deduce that,
for any $\eps$ and $\eta$,
\begin{equation}
\label{eq:terme_1}
\left| \EE \left( 
e^{i \xi \overline{Z}_\eps(\alpha,\beta,\cdot)} - 
e^{i \xi \widetilde{Z}_\eps(\alpha,\beta,\cdot)}
\right) \right|
\leq 
C |\xi| \,
\left(\eta^2 + \sqrt{\eps}\right)^{1/4} \, |{\cal A}'|_{L^1(\alpha,\beta)}.
\end{equation}

\medskip

We next turn to the second term of the right-hand side
of~\eqref{eq:decompo_gene}. We recall that
$\widetilde{Z}_\eps(\alpha,\beta,\omega)$ and
$\widetilde{\sigma}(\alpha,\beta)$ depend on $\eta$, through the choice
of the function $\widetilde{\cal A}$. 
For the parameter $\eta$ that we have chosen, 
$\widetilde{Z}_\eps(\alpha,\beta,\omega)$ converges in law to 
$\widetilde{Z}_0(\alpha,\beta,\omega)$ when $\eps \to 0$. Thus, there
exists $\eps_0(\eta)$, 
that depends on $\eta$ and can be chosen such that $\eps_0(\eta) \leq
\eta^4$, such that, for all $\eps < \eps_0(\eta)$,
\begin{equation}
\label{eq:terme_2}
\left|
\EE \left( e^{i \xi \widetilde{Z}_\eps(\alpha,\beta,\cdot)} \right)
-
\EE \left( e^{i \xi \widetilde{Z}_0(\alpha,\beta,\cdot)} \right)
\right|
\leq \eta.
\end{equation}

\medskip

We finally turn to the third term of the right-hand side
of~\eqref{eq:decompo_gene}. Since $\overline{Z}_0(\alpha,\beta,\omega)$
and $\widetilde{Z}_0(\alpha,\beta,\omega)$ are Gaussian random
variables, we see that
$$
\EE \left( e^{i \xi \widetilde{Z}_0(\alpha,\beta,\cdot)} 
-
e^{i \xi \overline{Z}_0(\alpha,\beta,\cdot)} 
\right)
=
\exp(-\xi^2 \widetilde{\sigma}(\alpha,\beta)/2) 
-
\exp(-\xi^2 \overline{\sigma}(\alpha,\beta)/2).
$$
Denoting by $L$ the Lipschitz constant of the function 
$\sigma \mapsto \exp(-\xi^2 \sigma/2)$ on $[0,\infty)$, we thus have
\begin{equation}
\left|
\EE \left( e^{i \xi \widetilde{Z}_0(\alpha,\beta,\cdot)} 
-
e^{i \xi \overline{Z}_0(\alpha,\beta,\cdot)} 
\right)
\right|
\leq
L \left| \widetilde{\sigma}(\alpha,\beta)
-
\overline{\sigma}(\alpha,\beta)
\right|
=
L \frac{\Var(Y_0)}{\EE(\int_0^1\phi')}
\left| \left\| \widetilde{\cal A} \right\|^2_{L^2(\alpha,\beta)}
-
\| {\cal A} \|^2_{L^2(\alpha,\beta)}
\right|.
\label{eq:terme_3a}
\end{equation}
We next write
\begin{equation}
\label{eq:terme_3b}
\left\| \widetilde{\cal A} \right\|^2_{L^2(\alpha,\beta)}
-
\| {\cal A} \|^2_{L^2(\alpha,\beta)}
=
\sum\limits_{p=1}^{N} \int_{t_p}^{t_{p+1}}
( {\cal A}_p^2 - {\cal A}(t)^2 ) \, dt
=
\sum\limits_{p=1}^{N} \int_{t_p}^{t_{p+1}}
( {\cal A}_p + {\cal A}(t) ) ( {\cal A}_p - {\cal A}(t) ) \, dt.
\end{equation}
In view of~\eqref{eq:choix2}, we have
$$
\forall t \in [t_p,t_{p+1}], \quad
{\cal A}(t) 
= 
{\cal A}(t_{p+1}^-) - \int_t^{t_{p+1}} {\cal A}'(s) \, ds
= 
{\cal A}_p - \int_t^{t_{p+1}} {\cal A}'(s) \, ds.
$$
Inserting this relation in~\eqref{eq:terme_3b}, we obtain
$$
\left\| \widetilde{\cal A} \right\|^2_{L^2(\alpha,\beta)}
-
\| {\cal A} \|^2_{L^2(\alpha,\beta)}
=
\sum\limits_{p=1}^{N} \int_{t_p}^{t_{p+1}}
( {\cal A}_p + {\cal A}(t) ) \int_t^{t_{p+1}} {\cal A}'(s) \, ds \, dt.
$$
Thus, in view of the choice~\eqref{eq:choix1}, we have
\begin{eqnarray}
\left|
\left\| \widetilde{\cal A} \right\|^2_{L^2(\alpha,\beta)}
-
\| {\cal A} \|^2_{L^2(\alpha,\beta)}
\right|
& \leq &
2 \| {\cal A} \|_{L^\infty(\alpha,\beta)}
\sum\limits_{p=1}^{N} \int_{t_p}^{t_{p+1}}
\int_t^{t_{p+1}} \left| {\cal A}'(s) \right| \, ds \, dt
\nonumber
\\
& \leq &
2 \| {\cal A} \|_{L^\infty(\alpha,\beta)}
\sum\limits_{p=1}^{N} \int_{t_p}^{t_{p+1}}
\left| {\cal A}'(s) \right| \, (s-t_p) \, ds 
\nonumber
\\
& \leq &
2 \eta \| {\cal A} \|_{L^\infty(\alpha,\beta)}
\sum\limits_{p=1}^{N} \int_{t_p}^{t_{p+1}}
\left| {\cal A}'(s) \right| \, ds 
\nonumber
\\
& \leq &
2 \eta \| {\cal A} \|_{L^\infty(\alpha,\beta)}
|{\cal A}'|_{L^1(\alpha,\beta)}.
\label{eq:terme_3c}
\end{eqnarray}
Inserting~\eqref{eq:terme_3c} in~\eqref{eq:terme_3a}, we deduce that
\begin{equation}
\left|
\EE \left( e^{i \xi \widetilde{Z}_0(\alpha,\beta,\cdot)} 
-
e^{i \xi \overline{Z}_0(\alpha,\beta,\cdot)} 
\right)
\right|
\leq
2 \eta L \frac{\Var(Y_0)}{\EE(\int_0^1\phi')} 
\| {\cal A} \|_{L^\infty(\alpha,\beta)}
|{\cal A}'|_{L^1(\alpha,\beta)}.
\label{eq:terme_3}
\end{equation}

\medskip

Collecting~\eqref{eq:decompo_gene}, \eqref{eq:terme_1},
\eqref{eq:terme_2} and~\eqref{eq:terme_3}, we have, for any $\eta$ and
any $\eps<\eps_0(\eta) \leq \eta^4$, that
$$
\left| 
\EE \left( e^{i \xi \overline{Z}_\eps(\alpha,\beta,\cdot)} \right)
-
\EE \left( e^{i \xi \overline{Z}_0(\alpha,\beta,\cdot)} \right)
\right|
\leq
C |\xi| \left( 2\eta^2 \right)^{1/4} \ |{\cal A}'|_{L^1(\alpha,\beta)}
+
\eta
+ 
2 \eta L \frac{\Var(Y_0)}{\EE(\int_0^1\phi')}
\| {\cal A} \|_{L^\infty(\alpha,\beta)}
|{\cal A}'|_{L^1(\alpha,\beta)}.
$$
The above bound holds for any $\eps < \eps_0(\eta)$, and $\eta$ is
arbitrary small. In addition, $|{\cal A}'|_{L^1(\alpha,\beta)}$ is independent
from $\eta$, even though the set of points $(t_p)_{1 \leq p \leq N+1}$
depends on $\eta$. This means that 
$$
\lim_{\eps \to 0} 
\left| 
\EE \left( e^{i \xi \overline{Z}_\eps(\alpha,\beta,\omega)} \right)
-
\EE \left( e^{i \xi \overline{Z}_0(\alpha,\beta,\omega)} \right)
\right|
= 0,
$$
hence $\overline{Z}_\eps(\alpha,\beta,\omega)$ converges in law to 
$\overline{Z}_0(\alpha,\beta,\omega)$. This concludes the proof of
Lemma~\ref{lemme2}.  

\subsection{Proof of Lemma~\ref{lemme3}}
\label{sec:tech3}

According to the definition~\eqref{def:Heps}, we have:
\begin{equation}
\label{eq:diff_H}
\left| H_\eps(x,\omega) - H_\eps(y,\omega) \right| 
\leq 
\left| \frac{1}{\sqrt{\eps}} \int_y^x {\cal A}_1(t) 
\psi\left(\phi^{-1}\left(\frac{t}{\eps},\omega\right)\right) dt 
\right| 
+ 
|x-y| \left| \frac{1}{\sqrt{\eps}} \int_0^1 {\cal A}_2(t) 
\psi\left(\phi^{-1}\left(\frac{t}{\eps},\omega\right)\right) dt\right|.
\end{equation}
Thus, for any $p \in \NN^\star$, using Lemma~\ref{lemme1} and the fact
that $|y-x| \leq 1$, we have
\begin{eqnarray*}
\EE \left[ \left| H_\eps(x,\cdot) - H_\eps(y,\cdot) \right|^{2p} \right]
& \leq & 
C_p \EE \left[ \left| \frac{1}{\sqrt{\eps}} \int_y^x {\cal A}_1(t) 
\psi\left(\phi^{-1}\left(\frac{t}{\eps},\cdot\right)\right) dt 
\right|^{2p} \right] 
\\
&& \quad + C_p |x-y|^{2p} \EE \left[ 
\left| \frac{1}{\sqrt{\eps}} \int_0^1 {\cal A}_2(t) 
\psi\left(\phi^{-1}\left(\frac{t}{\eps},\cdot\right)\right)
dt\right|^{2p} \right] 
\\
& \leq &
C_p \left( |x-y|^p + \eps^{(p-1)/2} \right)
+ 
C_p |x-y|^{2p} 
\\
& \leq &
C_p \left( |x-y|^p + \eps^{(p-1)/2} \right).
\end{eqnarray*}
This concludes the proof of~\eqref{eq:kol2}.

Assume now that $|x-y| \leq \eps$. We infer from~\eqref{eq:diff_H} that 
$$
\left| H_\eps(x,\omega) - H_\eps(y,\omega) \right| 
\leq 
\left| \frac{x-y}{\sqrt{\eps}} \right| \ \| \psi \|_{L^\infty(\RR)} 
\left( \| {\cal A}_1 \|_{L^\infty(0,1)} + 
\| {\cal A}_2 \|_{L^\infty(0,1)} \right)
\leq
C \sqrt{|x-y|},
$$
where $C$ is a deterministic constant independent of $\eps$, $x$ and
$y$. This concludes the proof of~\eqref{eq:kol3}, and hence the proof of
Lemma~\ref{lemme3}.

\section{Approximation of the homogenized matrix}
\label{sec:proofAN}

The aim of this section is to prove our second main result,
Theorem~\ref{conv:AN}. Since the approach described in
Section~\ref{sec:pres_approx} mimicks
the approach proposed in~\cite{bourgeat_ihp}, our proof essentially
follows the arguments used in~\cite{bourgeat_ihp}. Because our proof is
involved, we feel that it is useful to first recall the arguments
of~\cite{bourgeat_ihp} in Section~\ref{sec:bourgeat}. 
We then collect some technical results in Section~\ref{sec:AN_tech},
before turning to the actual proof of Theorem~\ref{conv:AN} in
Sections~\ref{sec:actual_proof} and~\ref{sec:proof_j_ball}.

\subsection{Convergence proof in the classical random homogenization
  setting}
\label{sec:bourgeat}

Consider the classical random homogenization problem
$$
-\mbox{div}\left[A\left(\frac{x}{\eps},\omega\right)
   \nabla u^\eps(x,\omega) \right] =f(x) \mbox{ in $\mathcal{D}$}, 
\qquad
u^\eps(\cdot,\omega) =0 \mbox{ on $\partial \mathcal{D}$},
$$
where $\mathcal{D}$ is a bounded open set of $\RR^d$, $f \in
L^2(\mathcal{D})$, and $A$ is a stationary matrix in the sense
of~\eqref{eq:stationnarite-disc}, satisfying classical coercivity and
boundedness properties. The associated homogenized problem
is~\eqref{PB:homo}, where the homogenized matrix is given by
$$
\forall 1 \leq i,j \leq d, \quad
A^\star_{ij} = \EE \left[ \int_Q e_i^T 
A\left(y,\cdot\right) 
\left( e_j + \nabla w_{e_j}(y,\cdot) \right) dy \right],
$$
where $Q=(0,1)^d$ and where, for all $p\in\RR^d$, $w_p$ solves the
corrector problem
$$
 \left\{ 
\begin{array}{l}
\dps
 -\mbox{div}\left[A(y,\omega)
   \left(p+\nabla w_p(y,\omega)\right) \right] = 0 \mbox{ in } \RR^d, 
\\
\nabla w_p \mbox{ is stationary in the sense
  of~\eqref{eq:stationnarite-disc},}
\quad
\dps \EE\left(\int_Q \nabla w_p(y,\cdot) dy \right)=0.
\end{array}
\right.
$$
In~\cite{bourgeat_ihp}, the following approximation strategy is
proposed: introduce the approximate corrector
$w_p^N(\cdot,\omega)$ as the $Q_N$-periodic function satisfying:
\begin{equation}
\label{PB:corrN-per_class}
\text{for all $\psi$ $Q_N$-periodic,}
\quad
\int_{Q_N} \left( \nabla \psi \right)^T A(\cdot,\omega) \left(p + \nabla w^N_p(\cdot,\omega)
\right)=0
\qquad
\text{with } \int_{Q_N} w^N_p(\cdot,\omega) = 0
\end{equation}
and the approximate homogenized matrix $A^\star_N(\omega)$
defined by, for any $1 \leq i,j \leq d$,
\begin{equation}
\label{eq:mat_homog_class}
\left[ A^\star_N \right]_{ij}(\omega)
=
\frac{1}{|Q_N|} \int_{Q_N} e_i^T A(\cdot,\omega)
\left(e_j + \nabla w^N_{e_j}(\cdot,\omega) \right).
\end{equation}
Then (see~\cite[Theorem 1]{bourgeat_ihp}), we have that 
\begin{equation}
\label{eq:conv_class}
\lim_{N \to \infty} A^\star_N(\omega) = A^\star \quad \text{almost
  surely}.
\end{equation} 
A key ingredient of the proof is the following classical homogenization result
(see~\cite[Theorem 5.2 p.~151]{jikov}):
\begin{theo}
\label{Jik-conv}
Let $A^\eps$ be a sequence of matrices that $G$-converges to $A^\star$
in a domain $V$, and let $V_1$ be an arbitrary subdomain of $V$. Let $p
\in \RR^d$, and assume that the functions $w_p^\eps \in H^1(V_1)$
satisfy the conditions  
$$
w_p^\eps \rightharpoonup w_p^\infty \quad \text{weakly in $H^1(V_1)$}, 
\quad \text{and} \quad 
-\hbox{div}\left[A^\eps (p + \nabla w_p^\eps)\right] = 0 \quad 
\text{in $\mathcal{D}'(V_1)$}.
$$
Then we have that
$$
A^\eps (p + \nabla w_p^\eps) \rightharpoonup A^\star (p + \nabla
w_p^\infty) 
\quad \text{weakly in $(L^2(V_1))^d$},
$$
where $w_p^\infty$ satisfies
$$
-\hbox{div}\left[A^\star (p + \nabla w_p^\infty)\right]=0 \quad 
\text{in $\mathcal{D}'(V_1)$}.
$$
\end{theo}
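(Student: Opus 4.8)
The plan is to prove Theorem~\ref{Jik-conv} by the classical \emph{oscillating test function} method of Murat--Tartar, combined with the div-curl (compensated compactness) lemma. Since $w_p^\eps \rightharpoonup w_p^\infty$ weakly in $H^1(V_1)$, the gradients $\nabla w_p^\eps$ are bounded in $(L^2(V_1))^d$ and converge weakly to $\nabla w_p^\infty$; together with the uniform bounds on $A^\eps$ inherent to $G$-convergence, this shows that the fluxes $\xi^\eps := A^\eps(p + \nabla w_p^\eps)$ are bounded in $(L^2(V_1))^d$. Extracting a subsequence, $\xi^\eps \rightharpoonup \xi$ weakly in $(L^2(V_1))^d$; the argument below identifies $\xi = A^\star(p + \nabla w_p^\infty)$ almost everywhere, which, being independent of the subsequence, yields convergence of the whole sequence. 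The second assertion, $-\mbox{div}[A^\star(p + \nabla w_p^\infty)] = 0$ in $\mathcal{D}'(V_1)$, then follows by passing the identity $-\mbox{div}\,\xi^\eps = 0$ to the distributional limit.

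To identify $\xi$, I would introduce oscillating test functions. Fix any ball $B$ with $B \Subset V_1$; it suffices to argue on such balls, since $\mathcal{D}'(V_1)$-convergence is tested against compactly supported functions and an a.e.\ identification on every such $B$ gives one on $V_1$. Using that $G$-convergence is local and stable under transposition, $(A^\eps)^T$ $G$-converges to $(A^\star)^T$ on $B$. For $q \in \RR^d$, let $v_q^\eps \in H^1(B)$ solve $-\mbox{div}[(A^\eps)^T \nabla v_q^\eps] = 0$ in $B$ with $v_q^\eps = q\cdot x$ on $\partial B$, and let $v_q^0$ solve the analogous problem with $(A^\star)^T$. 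By $G$-convergence, $v_q^\eps \rightharpoonup v_q^0$ weakly in $H^1(B)$ and, crucially, $(A^\eps)^T \nabla v_q^\eps \rightharpoonup (A^\star)^T \nabla v_q^0$ weakly in $(L^2(B))^d$, with $\mbox{div}[(A^\eps)^T \nabla v_q^\eps] = 0$.

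I would then apply the div-curl lemma twice on $B$. Since $\xi^\eps$ is divergence-free and $\nabla v_q^\eps$ is curl-free, $\xi^\eps \cdot \nabla v_q^\eps \rightharpoonup \xi \cdot \nabla v_q^0$ in $\mathcal{D}'(B)$. Since $(A^\eps)^T \nabla v_q^\eps$ is divergence-free and $p + \nabla w_p^\eps$ is curl-free, $\big[(A^\eps)^T \nabla v_q^\eps\big]\cdot(p + \nabla w_p^\eps) \rightharpoonup \big[(A^\star)^T \nabla v_q^0\big]\cdot(p + \nabla w_p^\infty)$ in $\mathcal{D}'(B)$. But $\xi^\eps \cdot \nabla v_q^\eps = \big[(A^\eps)^T \nabla v_q^\eps\big]\cdot(p + \nabla w_p^\eps)$ pointwise, so the two limits agree: $\xi \cdot \nabla v_q^0 = \nabla v_q^0 \cdot A^\star(p + \nabla w_p^\infty)$ a.e.\ in $B$. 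In the situation of interest here $A^\star$ is the constant homogenized matrix, so $v_q^0 = q\cdot x$, $\nabla v_q^0 = q$, and letting $q$ run over a basis of $\RR^d$ yields $\xi = A^\star(p + \nabla w_p^\infty)$ a.e.; in the general case one localizes $B$ around a point and uses $\nabla v_q^0 \to q$ as $B$ shrinks. This gives the first conclusion, and the second follows as explained above.

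The main difficulty is not the final identification — a one-line consequence of the div-curl lemma used twice — but rather setting up the oscillating test functions correctly: this requires invoking, and in a complete write-up justifying, the standard structural properties of $G$-convergence, namely its locality, its behaviour under restriction to subdomains, and its stability under transposition of the coefficient matrix, so that the auxiliary Dirichlet problems on $B$ are well posed and their fluxes can be identified. Careful bookkeeping of the nested domains $B \Subset V_1 \subset V$, and of the fact that the hypotheses and conclusions are phrased on $V_1$ rather than on $V$, is also needed but routine.
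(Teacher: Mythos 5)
The paper does not prove this statement at all: Theorem~\ref{Jik-conv} is imported verbatim as a classical result, with the reference \cite[Theorem 5.2 p.~151]{jikov}, and is then used as a black box in the proof of Theorem~\ref{conv:AN}. So there is no internal proof to compare against; what you have written is, in outline, the standard Murat--Tartar compensated-compactness argument, which is essentially how the cited textbook establishes the result, and your treatment of the case actually needed in this paper (constant $A^\star$) is sound: boundedness of the fluxes, subsequence extraction, the pointwise identity $A^\eps(p+\nabla w_p^\eps)\cdot\nabla v_q^\eps=\bigl[(A^\eps)^T\nabla v_q^\eps\bigr]\cdot(p+\nabla w_p^\eps)$, and the double use of the div-curl lemma correctly identify the limit flux, after which the limit equation follows by passing to the limit in the distributional divergence.

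Two caveats. First, the structural facts you invoke --- that $(A^\eps)^T$ $G$-converges to $(A^\star)^T$ and, above all, that the fluxes of the auxiliary Dirichlet problems converge, $(A^\eps)^T\nabla v_q^\eps\rightharpoonup(A^\star)^T\nabla v_q^0$ --- are not part of the bare definition of $G$-convergence (convergence of solution operators); in the symmetric setting they are theorems in their own right, and in the nonsymmetric setting they are the defining properties of $H$-convergence. A complete write-up must either cite them or prove them, as you acknowledge. Second, your reduction of the general (non-constant $A^\star$) case is genuinely incomplete: with Dirichlet data $q\cdot x$ on a small ball one only gets $\nabla v_q^0\cdot\bigl[\xi-A^\star(p+\nabla w_p^\infty)\bigr]=0$ a.e., and since $A^\star$ is merely measurable there is no easy argument that the vectors $\nabla v_q^0(x)$ span $\RR^d$ at a.e.\ $x$, nor that $\nabla v_q^0\to q$ as the ball shrinks. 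The standard fix is to use the oscillating test functions furnished by the $H$-convergence characterization, i.e.\ functions $v_q^\eps$ on a neighbourhood of the point with $\nabla v_q^\eps\rightharpoonup q$ (a constant) and $(A^\eps)^T\nabla v_q^\eps\rightharpoonup(A^\star)^Tq$, after which the same div-curl computation gives $q\cdot\bigl[\xi-A^\star(p+\nabla w_p^\infty)\bigr]=0$ for every $q$ directly. For the purpose of this paper the constant-coefficient case suffices, so your argument covers what is needed here.
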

The proof of~\eqref{eq:conv_class} goes as follows
(see~\cite{bourgeat_ihp} for details). The rescaled corrector
$$
w_{0,p}^N(x,\omega) := \frac{1}{N} w_p^N(Nx,\omega)
$$
is shown to satisfy the a priori bound 
$ \dps
\| w_{0,p}^N(\cdot,\omega) \|_{H^1(Q)} \leq C,
$
where $C$ is a deterministic constant independent from $N$. We thus
deduce that, almost surely, there exists a $Q$-periodic function 
$w^\infty_{0,p}(\cdot,\omega) \in H^1(Q)$ such that
$$
w_{0,p}^N(\cdot,\omega) \rightharpoonup w^\infty_{0,p}(\cdot,\omega) 
\quad \text{weakly in $H^1(Q)$}.
$$
Consider a $Q$-periodic function $\psi \in H^1(Q)$. Choosing $\psi_N(y)
= \psi(y/N)$ as test function in~\eqref{PB:corrN-per_class}, we obtain
\begin{equation}
\label{eq:rescale_class}
\int_Q \left( \nabla \psi \right)^T A(N \cdot,\omega) \left(p + \nabla w^N_{0,p}(\cdot,\omega)
\right)=0.
\end{equation}
We are then in position to use Theorem~\ref{Jik-conv} on the domain
$V_1 = Q$. We thus get that $\dps A(N \cdot,\omega) (p + \nabla
w_{0,p}^N)$ weakly converges to $A^\star (p + \nabla w_{0,p}^\infty)$ in
$(L^2(Q))^d$. We then infer from~\eqref{eq:rescale_class} that, for any
$Q$-periodic function $\psi$, we have
\begin{equation}
\label{eq:rescale_lim}
\int_Q \left( \nabla \psi \right)^T A^\star \left(p + \nabla w^\infty_{0,p}(\cdot,\omega)
\right)=0.
\end{equation}
This implies that $\nabla w^\infty_{0,p}(\cdot,\omega) =
0$. Using the same weak $L^2$
convergence as above, we deduce from~\eqref{eq:mat_homog_class} that
$$
\left[ A^\star_N \right]_{ij}(\omega)
=
\int_Q e_i^T A(N \cdot,\omega) \left(e_j + \nabla w^N_{0,e_j}(\cdot,\omega) \right)
\to
\int_Q e_i^T A^\star \left(e_j + \nabla w^\infty_{0,e_j}(\cdot,\omega) \right)
=
\left[ A^\star \right]_{ij}.
$$
This concludes the proof of~\eqref{eq:conv_class}.

\subsection{Some technical ingredients for our analysis}
\label{sec:AN_tech}

A key ingredient to prove Theorem~\ref{conv:AN} is to find an
appropriate domain on which to apply Theorem~\ref{Jik-conv}. The
following lemmas are useful for that purpose. 

We first recall (see~\cite[Lemme~2.1]{Blanc2006}) that 
$\dps \frac{1}{N}\phi(N\cdot,\omega)$
converges to $\dps \EE\left( \int_Q \nabla \phi \right) \, \cdot$ in
$L^\infty_{\rm loc}(\RR^d)$ almost surely. Likewise, in view of the
proof of~\cite[Lemme~2.2]{Blanc2006}, we have that 
$\dps \frac{1}{N}\phi^{-1}(N\cdot,\omega)$
converges to $\dps \left[ \EE\left( \int_Q \nabla \phi \right) \right]^{-1} \cdot$ in
$L^\infty_{\rm loc}(\RR^d)$ almost surely.
The functions being smooth, we thus have that, for any compact $K$,
\begin{equation}
\label{eq:reecriture_cras}
\lim_{N \to \infty} \left\|
\frac{1}{N}\phi(N\cdot,\omega) - \EE\left( \int_Q \nabla \phi \right) \, \cdot
\right\|_{C^0(K)}
=
\lim_{N \to \infty} \left\|
\frac{1}{N}\phi^{-1}(N\cdot,\omega) - \left[ \EE\left( \int_Q \nabla \phi \right) \right]^{-1} \cdot
\right\|_{C^0(K)}
= 
0 \ \ \text{a.s.}
\end{equation}
As pointed out in the proof of~\cite[Lemme~2.2]{Blanc2006}, a
consequence of the above fact is that
\begin{equation}
\label{eq:reecriture_cras2}
\lim_{N \to \infty} \left\| 
\mathbf{1}_{\frac{1}{N}\phi(N Q,\omega)} 
- 
\mathbf{1}_{\EE\left( \int_Q \nabla \phi \right) Q} \right\|_{L^1(\RR^d)} 
= 
0 \ \ \text{a.s.}
\end{equation}
This can be shown by first assuming that $\phi(0,\omega) = 0$, and using
a regularization of the indicator functions. The general case
$\phi(0,\omega) \neq 0$ next follows as an easy consequence.

\medskip

The first ingredient we need to prove Theorem~\ref{conv:AN} is the
following lemma, which is somewhat related with the above results: 
\begin{lemme}
\label{theo:phiQN}
Let $\phi$ be a diffeomorphism that
satisfies~\eqref{phi:cond1},~\eqref{phi:cond2}
and~\eqref{phi:cond3}. For any compact set $K$ that is a proper subset
of the open set $\dps \EE \left(\int_Q \nabla \phi \right) Q$, and
almost all $\omega$, there exists $N_0(\omega) \in \NN$ such that
$$
\forall N \geq N_0(\omega), \quad 
\overset{\circ}{K} \subset \frac{1}{N}\phi(Q_N,\omega),
$$
where $\overset{\circ}{K}$ denotes the interior of the set $K$ and, we
recall, $Q_N = N \, Q$.
\end{lemme}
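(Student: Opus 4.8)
The plan is to exploit the almost sure uniform convergence~\eqref{eq:reecriture_cras} of the rescaled inverse map $\frac1N\phi^{-1}(N\,\cdot\,,\omega)$ towards the linear map $y\mapsto\overline{A}^{-1}y$, where $\overline{A}:=\EE\left(\int_Q\nabla\phi\right)$ is invertible (its determinant is inverted in~\eqref{def:astar}). The key observation is that one should not try to invert the approximate map $x\mapsto\frac1N\phi(Nx,\omega)$ directly; rather, since $\phi^{-1}(\cdot,\omega)$ is globally defined on $\RR^d$, the quantity $\frac1N\phi^{-1}(Ny,\omega)$ is, for \emph{every} $y\in\RR^d$, a genuine preimage of $y$ under $x\mapsto\frac1N\phi(Nx,\omega)$, and the only thing to check is that this preimage lies inside $Q$.

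First I would record the geometry. Since $K$ is compact and contained in the open set $\overline{A}Q$, the set $\overline{A}^{-1}K$ is a compact subset of the open unit cube $Q=(0,1)^d$, so that $2\delta:=\operatorname{dist}\left(\overline{A}^{-1}K,\ \RR^d\setminus Q\right)>0$. Next I would invoke~\eqref{eq:reecriture_cras} with the compact set $K$: for almost all $\omega$, there exists $N_0(\omega)\in\NN$ such that, for all $N\geq N_0(\omega)$,
$$
\sup_{y\in K}\left|\frac1N\phi^{-1}(Ny,\omega)-\overline{A}^{-1}y\right|<\delta .
$$

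Finally I would conclude pointwise. Fix $N\geq N_0(\omega)$ and $y\in K$, and set $x_N:=\frac1N\phi^{-1}(Ny,\omega)$. By the previous bound $|x_N-\overline{A}^{-1}y|<\delta$, while $\overline{A}^{-1}y\in\overline{A}^{-1}K$, so that $x_N\in Q$ by the choice of $\delta$; therefore $Nx_N\in NQ=Q_N$ and, using that $\phi(\phi^{-1}(\cdot,\omega),\omega)=\mathrm{Id}$,
$$
\frac1N\phi(Nx_N,\omega)=\frac1N\phi\bigl(\phi^{-1}(Ny,\omega),\omega\bigr)=y ,
$$
which shows $y\in\frac1N\phi(Q_N,\omega)$. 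Hence $K\subset\frac1N\phi(Q_N,\omega)$ for all $N\geq N_0(\omega)$, and a fortiori $\overset{\circ}{K}\subset\frac1N\phi(Q_N,\omega)$.

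I do not expect a real obstacle here: the statement is essentially a uniform, quantitative reformulation of~\eqref{eq:reecriture_cras}. The only points needing care are the compactness argument that produces the uniform margin $\delta$ (so that a single $N_0(\omega)$ works for all $y\in K$ at once), and the fact — mild but important — that the preimage is furnished by the globally defined diffeomorphism $\phi^{-1}(\cdot,\omega)$, which sidesteps any circularity in asserting that $y$ belongs to the range of $\frac1N\phi(N\,\cdot\,,\omega)$.
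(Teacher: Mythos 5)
Your proof is correct, and it takes a genuinely more direct route than the paper. The paper argues by contradiction: assuming some point $z(N_0,\omega)\in \overset{\circ}{K}$ escapes $\frac{1}{N}\phi(Q_N,\omega)$ along a subsequence, it considers $y(N_0,\omega)=\frac{1}{N}\phi^{-1}(Nz(N_0,\omega),\omega)\notin Q$, uses the uniform Lipschitz bound on $\phi^{-1}(\cdot,\omega)$ (Lemma~\ref{lem:lip}) to extract convergent subsequences $\overline z\in K$, $\overline y\notin Q$, and then identifies $\overline z=\EE\left(\int_Q\nabla\phi\right)\overline y$ via~\eqref{eq:reecriture_cras}, reaching a contradiction. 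You instead quantify the compactness up front, setting $2\delta=\operatorname{dist}\left(\left[\EE\left(\int_Q\nabla\phi\right)\right]^{-1}K,\ \RR^d\setminus Q\right)>0$, and use the uniform convergence~\eqref{eq:reecriture_cras} on $K$ to place the canonical preimage $\frac{1}{N}\phi^{-1}(Ny,\omega)$ inside $Q$ for every $y\in K$ simultaneously, once $N\geq N_0(\omega)$. Both arguments rest on the same key ingredient~\eqref{eq:reecriture_cras} (and on the invertibility of $\EE\left(\int_Q\nabla\phi\right)$, guaranteed by $\det\EE\left(\int_Q\nabla\phi\right)\geq\nu>0$), but your version avoids the contradiction set-up, the subsequence extractions and the appeal to Lemma~\ref{lem:lip}, and it delivers the marginally stronger conclusion $K\subset\frac{1}{N}\phi(Q_N,\omega)$ rather than only $\overset{\circ}{K}\subset\frac{1}{N}\phi(Q_N,\omega)$; the paper's compactness argument, on the other hand, does not require isolating the explicit margin $\delta$. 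The only point to keep in mind is that the almost sure set on which~\eqref{eq:reecriture_cras} holds may depend on the compact set considered, but since $K$ is fixed in the statement this is harmless in both proofs.
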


The following easy result is useful to prove Lemma~\ref{theo:phiQN}:
\begin{lemme}
\label{lem:lip}
Let $\phi$ be a diffeomorphism that
satisfies~\eqref{phi:cond1} and~\eqref{phi:cond2}. Then there exists a
deterministic constant $L_{\rm Lip}$ such that the diffeomorphism
$\phi^{-1}(\cdot,\omega)$ is Lipschitz with that constant.
\end{lemme}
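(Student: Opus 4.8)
The plan is to combine the explicit formula for the differential of the inverse map with the classical cofactor expression for the inverse of a matrix. By the convention recalled just above the statement, one has $\left[\nabla \phi(\phi^{-1}(y,\omega),\omega)\right]\,\nabla(\phi^{-1})(y,\omega) = \Id$, and hence $\nabla(\phi^{-1})(y,\omega) = \left[\nabla \phi(\phi^{-1}(y,\omega),\omega)\right]^{-1}$ for almost every $y \in \RR^d$ and almost every $\omega$. It therefore suffices to bound $|A^{-1}|$ uniformly for matrices of the form $A = \nabla\phi(z,\omega)$, using only the two structural bounds $\det A \geq \nu$ (coming from~\eqref{phi:cond1}) and $|A| \leq M$ (coming from~\eqref{phi:cond2}).

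For this, I would use the identity $A^{-1} = (\det A)^{-1}\,\mathrm{Com}(A)^T$, valid for any invertible $A \in \RR^{d\times d}$, where $\mathrm{Com}(A)$ denotes the cofactor matrix. Each entry of $\mathrm{Com}(A)$ is, up to a sign, a minor of $A$ of order $d-1$, hence is bounded in absolute value by a purely dimensional constant times $|A|^{d-1}$ (e.g.\ by expanding the determinant of the relevant $(d-1)\times(d-1)$ submatrix, or via Hadamard's inequality). Consequently $|\mathrm{Com}(A)| \leq C_d\,|A|^{d-1}$ for a constant $C_d$ depending only on $d$, and therefore
\begin{equation*}
\left| \nabla(\phi^{-1})(y,\omega) \right|
= \left| \left[\nabla \phi(\phi^{-1}(y,\omega),\omega)\right]^{-1} \right|
\leq \frac{C_d\, M^{d-1}}{\nu}
\quad \text{for a.e.\ } y \in \RR^d, \text{ a.s.}
\end{equation*}

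It then remains to turn this (a.e.) bound on the gradient into a global Lipschitz estimate. Since $\phi(\cdot,\omega)$ is a diffeomorphism of $\RR^d$ with $\nabla\phi \in L^\infty(\RR^d \times \Omega)$, its inverse $\phi^{-1}(\cdot,\omega)$ is (locally) Lipschitz, hence absolutely continuous along segments, so integrating the gradient along the segment joining any two points $y$ and $z$ yields
\begin{equation*}
\left| \phi^{-1}(y,\omega) - \phi^{-1}(z,\omega) \right|
\leq \left( \int_0^1 \left| \nabla(\phi^{-1})\bigl((1-t)z + ty,\omega\bigr) \right| dt \right) |y-z|
\leq L_{\rm Lip}\, |y-z|,
\end{equation*}
with the deterministic constant $L_{\rm Lip} := C_d\, M^{d-1}/\nu$, which is indeed independent of $\omega$. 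The only point requiring a little care is the elementary linear-algebra estimate $|A^{-1}| \leq C_d\,|A|^{d-1}/\det A$ together with the bookkeeping of the dimensional constant $C_d$; the rest of the argument is routine.
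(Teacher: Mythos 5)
Your proposal is correct and follows essentially the same route as the paper: both arguments reduce the claim to a deterministic uniform bound on $\left(\nabla\phi\right)^{-1}$ obtained solely from $\det\nabla\phi\geq\nu$ and $|\nabla\phi|\leq M$, and then conclude the Lipschitz property of $\phi^{-1}$. The only difference is cosmetic — you bound the inverse via the cofactor formula and Hadamard-type estimates on minors, whereas the paper bounds the smallest eigenvalue of $\nabla\phi^T\nabla\phi$ — and both yield a constant of order $M^{d-1}/\nu$.
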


\begin{proof}[Proof of Lemma~\ref{lem:lip}]
We infer from~\eqref{phi:cond2} that $\nabla \phi^T \nabla \phi$, which
is a symmetric matrix and therefore diagonalizable, has a bounded
spectrum. The assumption~\eqref{phi:cond1} then implies that the
eigenvalues of $\nabla \phi^T \nabla \phi$ are bounded away from
$0$. Hence, there exists a deterministic constant $c>0$ such
that for all $\xi \in \RR^d$ we have
$$
\xi^T (\nabla \phi(x,\omega)^T \nabla \phi(x,\omega) \xi \geq 
c |\xi|^2 \quad \text{a.s., a.e. on $\RR^d$}.
$$
For any $\overline{\xi} \in \RR^d$, we set 
$\xi=(\nabla \phi(x,\omega))^{-1} \overline{\xi}$ and obtain that 
\begin{equation}
\label{eq:utile}
\left| (\nabla \phi(x,\omega))^{-1} \overline{\xi} \right|
\leq c^{-1/2} \, \left| \overline{\xi} \right|.
\end{equation} 
The diffeomorphism $\phi^{-1}(\cdot,\omega)$ is thus Lipshitz with the
deterministic constant $c^{-1/2}$.
\end{proof}

\begin{proof}[Proof of Lemma~\ref{theo:phiQN}]
Let $K$ be a proper subset of the
open set $\dps \EE \left( \int_Q \nabla \phi \right) Q$, and let us fix
$\omega$ such that
\begin{equation}
\label{eq:choix_omega}
\frac{1}{N}\phi^{-1}(N \cdot,\omega)
\text{ converges to } 
\left[ \EE \left(\int_Q \nabla \phi\right) \right]^{-1} \cdot 
\text{ in $C^0(K)$}.
\end{equation} 
In view of~\eqref{eq:reecriture_cras}, we know
that~\eqref{eq:choix_omega} holds for almost all $\omega$. 

We prove Lemma~\ref{theo:phiQN} by contradiction. 
Suppose that, for all $N_0 \in \NN$, there exists
$N(N_0,\omega) \geq N_0$ such that $\overset{\circ}{K}$ is not included
in $\dps
\frac{1}{N(N_0,\omega)}\phi(Q_{N(N_0,\omega)},\omega)$. Otherwise
stated, there exist $N(N_0,\omega)$ and $z(N_0,\omega)$ such that 
$$
z(N_0,\omega) \in \overset{\circ}{K}  
\quad \text{and} \quad 
z(N_0,\omega) \notin \frac{1}{N(N_0,\omega)}\phi(N(N_0,\omega)Q,\omega).
$$
Introduce $\dps y(N_0,\omega) =
\frac{1}{N(N_0,\omega)}\phi^{-1}(N(N_0,\omega)z(N_0,\omega),\omega)$. We
thus have that 
\begin{equation}
\label{eq:flor1}
y(N_0,\omega) \notin Q.
\end{equation}
We now pass to the limit $N_0 \to \infty$. 
Observing that $z(N_0,\omega)$ belongs to the compact set $K$, we deduce that
$\left\{ z(N_0,\omega) \right\}_{N_0 \in \NN}$ is a bounded sequence and thus
converges, up to the extraction of a subsequence, toward some
$\overline{z}(\omega) \in K$.

Let us now show that $\left\{y(N_0,\omega)\right\}_{N_0 \in \NN}$ is also a
bounded sequence. Using the fact that the diffeomorphism
$\phi^{-1}(\cdot,\omega)$ is 
a Lipschitz mapping with a deterministic constant $L_{\rm Lip}$
(see Lemma~\ref{lem:lip}), we write
$$
|y(N_0,\omega)|
=
\frac{1}{N(N_0,\omega)} 
\left| \phi^{-1}(N(N_0,\omega)z(N_0,\omega),\omega) \right| 
\leq 
L_{\rm Lip} |z(N_0,\omega)| + \frac{1}{N(N_0,\omega)} |\phi^{-1}(0,\omega)|.
$$
We deduce that, almost surely, $\left\{ y(N_0,\omega) \right\}_{N_0 \in
  \NN}$ is a bounded 
sequence and thus converges, up to the extraction of a subsequence,
toward some $\overline{y}(\omega)$. In view of~\eqref{eq:flor1}, and since $Q$
is an open set, we have that $\overline{y}(\omega) \notin Q$. 

We now claim that 
\begin{equation}
\label{eq:claim}
\overline{z}(\omega) = \EE \left(\int_Q \nabla \phi\right) 
\overline{y}(\omega).
\end{equation} 
Indeed, we write that
\begin{eqnarray*}
&& \left| y(N_0,\omega) 
- 
\left[ \EE \left(\int_Q \nabla \phi\right) \right]^{-1} \overline{z}(\omega)
\right|
\\
&\leq&
\hspace{-2mm}
\left| 
\frac{1}{N(N_0,\omega)} \phi^{-1}(N(N_0,\omega)z(N_0,\omega),\omega) 
- 
\left[ \EE \left(\int_Q \nabla \phi\right) \right]^{-1} z(N_0,\omega)
\right| 
+
\left|
\left[ \EE \left(\int_Q \nabla \phi\right) \right]^{-1} z(N_0,\omega)
-
\left[ \EE \left(\int_Q \nabla \phi\right) \right]^{-1} \overline{z}(\omega)
\right|
\\
&\leq&
\left\| 
\frac{1}{N(N_0,\omega)} \phi^{-1}(N(N_0,\omega) \cdot,\omega) 
- 
\left[ \EE \left(\int_Q \nabla \phi\right) \right]^{-1} \cdot
\right\|_{C^0(K)} 
+
C \left| z(N_0,\omega) - \overline{z}(\omega) \right|.
\end{eqnarray*}
Both terms converge to 0 when $N_0 \to \infty$, respectively in view of~\eqref{eq:choix_omega}
and of the definition of $\overline{z}(\omega)$. By definition of
$\overline{y}(\omega)$, we deduce~\eqref{eq:claim}.  

We now reach a contradiction since $\dps \overline{z}(\omega) \in K
\subset \EE \left(\int_Q \nabla \phi \right) Q$ whereas
$\overline{y}(\omega) \notin Q$. This concludes the proof of
Lemma~\ref{theo:phiQN}.
\end{proof}

The second ingredient we need to prove Theorem~\ref{conv:AN} is the
following lemma: 
\begin{lemme}
\label{lem:inclusion}
Let $\phi$ be a diffeomorphism that
satisfies~\eqref{phi:cond1} and~\eqref{phi:cond2}.
There exists an open set $\widetilde{Q}(\omega)$ and
some $k(\omega) \in \NN$ such that
\begin{align}
\label{prop:i}
\forall N \in \NN^\star, \quad
&
\frac{1}{N} \phi(Q_N,\omega) \subset \widetilde{Q}(\omega),
\\
\label{prop:ii}
&
\EE \left(\int_Q \nabla \phi \right) Q \subset \widetilde{Q}(\omega), 
\\
\label{prop:iii}
\forall N \in \NN^\star, \quad
&
\widetilde{Q}(\omega) \subset \frac{1}{N}\phi(Q_{k(\omega)N},\omega).
\end{align}
\end{lemme}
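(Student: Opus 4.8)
The plan is to build $\widetilde Q(\omega)$ explicitly as the union of all the rescaled deformed cells and to deduce~\eqref{prop:i} and~\eqref{prop:ii} almost directly, the real difficulty being confined to~\eqref{prop:iii}. Write $\bar A = \EE\left(\int_Q \nabla\phi\right)$ and set
$$
\widetilde Q(\omega) = \bigcup_{N \in \NN^\star} \frac1N\,\phi(Q_N,\omega).
$$
For each $N$ the map $x \mapsto \frac1N\phi(Nx,\omega)$ is a diffeomorphism of $\RR^d$, so each $\frac1N\phi(Q_N,\omega)$ is open, and so is $\widetilde Q(\omega)$; property~\eqref{prop:i} then holds by construction. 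Moreover $\widetilde Q(\omega)$ is bounded: by~\eqref{phi:cond2} the map $\phi(\cdot,\omega)$ is Lipschitz with a deterministic constant $L$, hence $\frac1N\phi(Q_N,\omega) \subset B\!\left(\frac1N\phi(0,\omega),\,L\sqrt d\,\right)$, and since $\frac1N|\phi(0,\omega)| \le |\phi(0,\omega)|$ for $N \ge 1$ these balls all lie in a fixed ball $B(0,R(\omega))$. For~\eqref{prop:ii}, fix $z$ in the open set $\bar A Q$ and a closed ball $\overline{B(z,\delta)} \subset \bar A Q$; Lemma~\ref{theo:phiQN} applied with $K = \overline{B(z,\delta)}$ gives some $N_0(\omega)$ such that $B(z,\delta) \subset \frac1N\phi(Q_N,\omega)$ for all $N \ge N_0(\omega)$, hence $z \in \widetilde Q(\omega)$; since $z$ is arbitrary, $\bar A Q \subset \widetilde Q(\omega)$.

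It remains to prove~\eqref{prop:iii}, and here the main tool is the scaling identity $\frac1N\phi(Q_{kN},\omega) = k\,\frac{1}{kN}\phi(Q_{kN},\omega)$, valid for all $k,N \in \NN^\star$, combined with Lemma~\ref{theo:phiQN}. Given a compact set $K$ that is a proper subset of $\bar A Q$, that lemma provides $N_0(K,\omega)$ with $\overset{\circ}{K} \subset \frac1M\phi(Q_M,\omega)$ whenever $M \ge N_0(K,\omega)$; choosing any integer $k \ge N_0(K,\omega)$ one then has, for \emph{every} $N \in \NN^\star$ (since $kN \ge k \ge N_0(K,\omega)$),
$$
\frac1N\phi(Q_{kN},\omega) \;=\; k\,\frac{1}{kN}\phi(Q_{kN},\omega) \;\supset\; k\,\overset{\circ}{K}.
$$
The task is thus to choose $K \subset \bar A Q$ compact and $k \ge N_0(K,\omega)$ so that $\widetilde Q(\omega) \subset \frac1N\phi(Q_{kN},\omega)$ for all $N$: the ``bulk'' of the bounded set $\widetilde Q(\omega)$ is absorbed by $k\,\overset{\circ}{K}$ for $K$ a large enough sub-parallelepiped of $\bar A Q$ and $k$ large, while the portion of $\widetilde Q(\omega)$ close to the vertex $\phi(0,\omega)$ must be handled by going back to the actual set $\frac1N\phi(Q_{kN},\omega)$, which is strictly larger than $k\,\overset{\circ}{K}$ near that vertex, and estimating it directly via the two-sided Lipschitz bounds on $\phi$ and $\phi^{-1}$ coming from~\eqref{phi:cond1},~\eqref{phi:cond2} and Lemma~\ref{lem:lip}.

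The step I expect to be the main obstacle is exactly this near-vertex analysis in~\eqref{prop:iii}. A dilate $k\,\overset{\circ}{K}$ of a compact subset of $\bar A Q$ moves away from the vertex as $k$ increases, whereas $\widetilde Q(\omega) \supset \bar A Q$ comes arbitrarily close to it, so Lemma~\ref{theo:phiQN} by itself cannot cover $\widetilde Q(\omega)$; it is convenient to first translate so that $\phi(0,\omega)=0$ and then compare, near the origin, the local cones cut out by $\widetilde Q(\omega)$ and by $\frac1N\phi(Q_{kN},\omega)$, using that $\phi$ is, locally around $0$, a bi-Lipschitz perturbation of the linear map $\nabla\phi(0,\omega)$.
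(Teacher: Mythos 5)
There is a genuine gap: \eqref{prop:iii} is never actually proved. Your argument for it stops at a plan --- absorb the ``bulk'' of $\widetilde Q(\omega)$ into a dilate $k\,\overset{\circ}{K}$ of a compact $K\subset \EE\left(\int_Q\nabla\phi\right)Q$ via Lemma~\ref{theo:phiQN}, and treat the part of $\widetilde Q(\omega)$ near the corner $\tfrac1N\phi(0,\omega)$ by a ``local cone'' comparison --- and you yourself flag that last step as the main obstacle without carrying it out. For your choice of $\widetilde Q(\omega)=\bigcup_N \tfrac1N\phi(Q_N,\omega)$ (the smallest admissible set) this corner step is genuinely delicate: the corner of $\widetilde Q(\omega)$ is the envelope of the corners of all the sets $\tfrac1M\phi(Q_M,\omega)$, and nothing in your sketch shows that a single $k(\omega)$ makes $\tfrac1N\phi(Q_{k(\omega)N},\omega)$ cover this envelope uniformly in $N$; the bi-Lipschitz-perturbation-of-$\nabla\phi(0,\omega)$ idea is not made quantitative anywhere. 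A secondary point: both your proof of \eqref{prop:ii} and your treatment of the ``bulk'' rely on Lemma~\ref{theo:phiQN}, which uses the almost sure convergence~\eqref{eq:reecriture_cras} and hence stationarity~\eqref{phi:cond3}, an assumption not made in Lemma~\ref{lem:inclusion}.

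The paper's proof shows that the difficulty you run into is an artifact of insisting on the minimal $\widetilde Q(\omega)$: the lemma only asks for \emph{some} bounded open sandwiching set, and nothing requires \eqref{prop:i} or \eqref{prop:iii} to be sharp. The paper takes $\widetilde Q(\omega)$ to be a large cube: since $\tfrac1N|\phi(Nx,\omega)|\le M|x|+\tfrac1N|\phi(0,\omega)|$ by~\eqref{phi:cond2}, all the sets $\tfrac1N\phi(Q_N,\omega)$ lie in a cube of half-side $M+|\phi(0,\omega)|$, which gives \eqref{prop:i}; enlarging the cube so that it also contains the bounded set $\EE\left(\int_Q\nabla\phi\right)Q$ gives \eqref{prop:ii} for free. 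Then \eqref{prop:iii} is a one-liner from Lemma~\ref{lem:lip}: $\tfrac1N\left|\phi^{-1}(Nx,\omega)\right|\le L_{\rm Lip}|x|+|\phi(0,\omega)|$ uniformly in $N$, so $\tfrac1N\phi^{-1}\left(N\widetilde Q(\omega),\omega\right)\subset Q_{k(\omega)}$ as soon as $k(\omega)$ exceeds a fixed multiple of $\operatorname{diam}\widetilde Q(\omega)$ plus $|\phi(0,\omega)|$, which is \eqref{prop:iii}. You actually name the right tool (the two-sided Lipschitz bounds and Lemma~\ref{lem:lip}) but reserve it for the corner region only; once $\widetilde Q(\omega)$ is allowed to be large rather than minimal, that single bound settles all of \eqref{prop:iii} at once, with no appeal to Lemma~\ref{theo:phiQN}, no bulk/corner splitting, and no stationarity.
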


\begin{proof}
The first assertion relies on the fact that, in view
of~\eqref{phi:cond2}, we have 
$$
\forall N \in\NN^\star, \quad
\frac{1}{N} |\phi(Nx,\omega)|
\leq M |x| + \frac{1}{N} |\phi^{-1}(0,\omega)| 
\leq M + |\phi^{-1}(0,\omega)| 
\quad 
\text{a.s., a.e. on $Q$}.
$$
It is thus sufficient to choose $\widetilde{Q}(\omega)$ such that 
$\Big[ -M - |\phi^{-1}(0,\omega)|, M + |\phi^{-1}(0,\omega)| \Big]^d 
\subset \widetilde{Q}(\omega)$.
Upon choosing a larger $\widetilde{Q}(\omega)$, the second assertion is
also satisfied. Now that $\widetilde{Q}(\omega)$ is chosen, we show
that we can 
choose $k(\omega)$ such that the third assertion is satisfied. 
Using Lemma~\ref{lem:lip}, we see that, almost surely,
$$
\forall N \in\NN^\star, \quad
\frac{1}{N} |\phi^{-1}(Nx,\omega)| 
\leq 
L_{\rm Lip} |x| + \frac{1}{N} |\phi(0,\omega)|
\leq 
L_{\rm Lip} |x| + |\phi(0,\omega)|
\quad 
\text{a.e. on $\RR^d$}.
$$
There thus exists $k(\omega)$ such that, for any $N \in\NN^\star$, we have
$\dps \frac{1}{N} \phi^{-1}\left(N \widetilde{Q}(\omega),\omega\right)
\subset Q_{k(\omega)}$. This implies the third assertion and concludes
the proof.
\end{proof}

\subsection{Proof of Theorem~\ref{conv:AN}}
\label{sec:actual_proof}

To simplify the notation, we introduce the matrix
\begin{equation}
\label{eq:def_alpha}
\alpha := \EE\left(\int_Q \nabla \phi\right) \in \RR^{d \times d}.
\end{equation}
As pointed out in~\cite[Remark 1.9]{Blanc2007}, we have that
\begin{equation}
\label{eq:signe}
\det \alpha = \EE \left( \int_Q \det \nabla \phi \right).
\end{equation} 
We hence deduce from~\eqref{phi:cond1} that
$$
\det \alpha \geq \nu > 0.
$$
We also introduce the matrix $\beta \in \RR^{d \times d}$ defined by
\begin{equation}
\label{eq:def_beta}
\beta 
= 
\EE\left[ \int_{\phi(Q,\cdot)} \left( 
\nabla \phi\left(\phi^{-1}(x,\cdot),\cdot\right) \right)^{-1} \, dx \right]
=
\EE \left[ \int_Q \det(\nabla \phi) \, (\nabla \phi)^{-1} \right].
\end{equation}
The proof of the following lemma, useful for proving
Theorem~\ref{conv:AN}, is postponed until
Section~\ref{sec:proof_j_ball}. 

\begin{lemme}
\label{lem:j_ball}
The constant matrix $\beta A^\star \alpha^{-T}$ is
coercive. 
\end{lemme}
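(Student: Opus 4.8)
The plan is to reduce the statement to the coercivity of $A^\star$ alone, by first establishing the identity
$$\beta = (\det\alpha)\,\alpha^{-1}.$$
To prove this identity I would argue as follows. The matrix field $z\mapsto\det(\nabla\phi(z,\omega))(\nabla\phi(z,\omega))^{-1}$ is stationary in the sense of~\eqref{eq:stationnarite-disc} (it is a polynomial expression in $\nabla\phi$, Lemma~\ref{lem:lip} controlling the inverse) and bounded by~\eqref{phi:cond1}--\eqref{phi:cond2}, so the ergodic theorem gives $\beta=\lim_{N\to\infty}\frac{1}{|Q_N|}\int_{Q_N}\det(\nabla\phi)(\nabla\phi)^{-1}$ almost surely. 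The change of variables $y=\phi(z,\omega)$ turns this into $\frac{1}{N^d}\int_{\phi(Q_N,\omega)}\nabla_y\phi^{-1}(y,\omega)\,dy$. Writing $\phi^{-1}(y,\omega)=\alpha^{-1}y+r_N(y,\omega)$ and using~\eqref{eq:reecriture_cras} together with the inclusion $\phi(Q_N,\omega)\subset N\widetilde Q(\omega)$ from Lemma~\ref{lem:inclusion}, one gets $\sup_{\phi(Q_N,\omega)}|r_N|=o(N)$; since $\phi$ is $M$-Lipschitz one also has $\mathcal H^{d-1}(\partial\phi(Q_N,\omega))=O(N^{d-1})$, so the divergence theorem $\int_D\nabla g=\int_{\partial D}g\otimes\nu$ yields $\int_{\phi(Q_N,\omega)}\nabla_y r_N=o(N^d)$. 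On the other hand $\int_{\phi(Q_N,\omega)}\alpha^{-1}\,dy=\alpha^{-1}|\phi(Q_N,\omega)|=\alpha^{-1}\int_{Q_N}\det\nabla\phi$, and the ergodic theorem together with~\eqref{eq:signe} gives $\frac{1}{N^d}\int_{Q_N}\det\nabla\phi\to\det\alpha$. Hence $\beta=(\det\alpha)\alpha^{-1}$. (Alternatively, this is the weak continuity of the cofactor along $\frac{1}{N}\phi(N\cdot,\omega)\to\alpha\,\cdot$ in $H^1_{\rm loc}$, whose gradients are $L^\infty$-bounded, combined again with the ergodic theorem.)

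Next I would prove that $A^\star$ is coercive, by a standard energy computation. Using~\eqref{def:astar} in its rewritten (post-change-of-variables) form and the linearity of~\eqref{PB:correc} in $p$, for any $q\in\RR^d$ one has $(\det\alpha)\,q^TA^\star q=\EE\big[\int_Q\det(\nabla\phi)\,q^TA_{\rm per}\,v_q\big]$ with $v_q:=q+(\nabla\phi)^{-T}\nabla\widetilde w_q$. Adding $\EE\big[\int_Q\det(\nabla\phi)\,(\nabla\widetilde w_q)^T(\nabla\phi)^{-1}A_{\rm per}\,v_q\big]$, which vanishes by the weak formulation of~\eqref{PB:correc} with test function $\widetilde\psi=\widetilde w_q$, gives the energy identity $(\det\alpha)\,q^TA^\star q=\EE\big[\int_Q\det(\nabla\phi)\,v_q^TA_{\rm per}\,v_q\big]$. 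By~\eqref{eq:hyp_a} and~\eqref{phi:cond1} the right-hand side is $\ge a_-\nu\,\EE\big[\int_Q|v_q|^2\big]$. Moreover the normalization in~\eqref{PB:correc}, after the same change of variables, reads $\EE\big[\int_Q\det(\nabla\phi)(\nabla\phi)^{-T}\nabla\widetilde w_q\big]=0$, so $\EE\big[\int_Q\det(\nabla\phi)\,v_q\big]=(\det\alpha)\,q$ by~\eqref{eq:signe}; the Cauchy--Schwarz inequality on $\Omega\times Q$ then gives $\EE\big[\int_Q|v_q|^2\big]\ge(\det\alpha)^2\,\|\det\nabla\phi\|_{L^\infty}^{-2}\,|q|^2$. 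Combining and using $\nu\le\det\alpha\le C_dM^d$ yields $q^TA^\star q\ge c_\star|q|^2$ with an explicit $c_\star>0$ depending only on $a_-$, $\nu$, $M$ and $d$.

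Finally, with $\beta=(\det\alpha)\alpha^{-1}$, for every $\xi\in\RR^d$,
$$\xi^T\beta A^\star\alpha^{-T}\xi=(\det\alpha)\,(\alpha^{-T}\xi)^TA^\star(\alpha^{-T}\xi)\ge(\det\alpha)\,c_\star\,|\alpha^{-T}\xi|^2\ge \nu\,c_\star\,M^{-2}\,|\xi|^2,$$
where we used $\det\alpha\ge\nu$ and that $|\alpha|\le M$ (since $\alpha=\EE(\int_Q\nabla\phi)$ and~\eqref{phi:cond2}), hence $|\alpha^{-T}\xi|\ge M^{-1}|\xi|$. This proves that $\beta A^\star\alpha^{-T}$ is coercive.

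The main obstacle is the identity $\beta=(\det\alpha)\alpha^{-1}$: it is not a pointwise-in-$\omega$ statement and genuinely exploits that $\det(\nabla\phi)(\nabla\phi)^{-1}$ is (the transpose of) the cofactor of a gradient map whose rescalings converge to the linear map $\alpha\,\cdot$. Making the remainder/boundary estimate above fully rigorous — or, equivalently, invoking weak continuity of minors with the available $L^\infty$ bound on $\nabla\phi$ — is the delicate step; once it is in hand, the coercivity of $A^\star$ and the concluding congruence argument are routine.
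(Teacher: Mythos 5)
Your proposal is correct and follows the same overall skeleton as the paper (prove $\beta=\det\alpha\,\alpha^{-1}$, prove $A^\star$ coercive, conclude by the congruence $\xi\mapsto\alpha^{-T}\xi$), but you establish the key identity by a genuinely different route. The paper simply observes that $\beta=\EE\left[\int_Q{\rm adj}\,\nabla\phi\right]$ and invokes the weak continuity of the adjugate of gradient maps (Henao--Mora-Corral, and Ball for $d=3$), together with the identity~\eqref{eq:signe} quoted from~\cite{Blanc2007}, to get ${\rm adj}\,\EE\left[\int_Q\nabla\phi\right]$; you instead give a self-contained argument: ergodic theorem for the stationary bounded field $\det(\nabla\phi)(\nabla\phi)^{-1}$, change of variables to $\frac{1}{N^d}\int_{\phi(Q_N,\omega)}\nabla_y(\phi^{-1})$, the sup bound $\sup_{\phi(Q_N,\omega)}|\phi^{-1}(y,\omega)-\alpha^{-1}y|=o(N)$ from~\eqref{eq:reecriture_cras} and Lemma~\ref{lem:inclusion}, and a Gauss--Green boundary estimate. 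This in effect reproves the null-Lagrangian/weak-continuity property of the cofactor in this stationary setting; it buys self-containedness and explicit constants at the price of a little extra care (the Gauss--Green formula must be applied on $\phi(Q_N,\omega)$, a bi-Lipschitz image of a cube, so one should argue via sets of finite perimeter or an approximation, and the $\mathcal H^{d-1}$ bound on its boundary via the Lipschitz constant of $\phi$, as you indicate), whereas the paper's citation route is shorter. Your coercivity proof of $A^\star$ is essentially the paper's energy argument transported to the reference coordinates: note that taking $\widetilde\psi=\widetilde w_q$ in the weak formulation is legitimate only because that formulation extends to test functions with stationary \emph{gradient} (the corrector itself is not stationary) -- the same point the paper handles implicitly by citing the existence proof of~\cite{Blanc2006} -- so this is not a gap relative to the paper's own level of detail. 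The final congruence step coincides with the paper's.
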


The proof of Theorem~\ref{conv:AN} is composed of four steps. In 
Step 1, we introduce a rescaled corrector, denoted $w_{0,p}^N(\cdot,\omega)$
(see~\eqref{def:w0pN} below), and show that it converges toward some
function $w_{0,p}^\infty(\cdot,\omega)$ weakly in $H^1$. Then, in Step
2, we prove that $w_{0,p}^\infty(\cdot,\omega)$ is $\alpha
Q$-periodic. Next, in Step 3, we show 
that $w_{0,p}^\infty(\cdot,\omega)$ solves the equation 
$-\hbox{div}\left[B \nabla w_{0,p}^\infty\right]=0$ in
$\RR^d$ for a constant deterministic matrix $B$
(see~\eqref{eq:conc_step3} below for a precise statement). Combining these
results and using Lemma~\ref{lem:j_ball}, we conclude 
that $\nabla w_{0,p}^\infty \equiv 0$. This is a key ingredient to
prove, in Step 4, that the random approximation $A^\star_N(\omega)$
indeed converges to the homogenized matrix $A^\star$ almost surely. 

\medskip

\noindent
\textbf{Step 1: Introduction of a rescaled corrector $w_{0,p}^N$, and
  convergence of $w_{0,p}^N$ to some $w_{0,p}^\infty$}

We first establish some a priori bounds.
Taking $\widetilde{\psi} = \widetilde{w}_p^N$ as test function
in~\eqref{PB:corrN-per}, and using~\eqref{phi:cond1}
and~\eqref{phi:cond2}, we see that 
$$
\| (\nabla \phi(\cdot,\omega))^{-T} 
\nabla \widetilde{w}_p^N(\cdot,\omega) \|_{L^2(Q_N)} \leq C \sqrt{|Q_N|},
$$
where $C$ is a deterministic constant independent from $N$.
Using again~\eqref{phi:cond2}, we deduce that
$$
\| \nabla \widetilde{w}_p^N(\cdot,\omega) \|_{L^2(Q_N)} \leq C \sqrt{|Q_N|}.
$$
Let $k \in \NN$. Since $\widetilde{w}_p^N$ is $Q_N$-periodic, we
infer from the above bound that 
\begin{equation}
\label{eq:flor3}
\| \nabla \widetilde{w}_p^N(\cdot,\omega) \|_{L^2(Q_{kN})} \leq 
C \sqrt{|Q_{kN}|},
\end{equation}
where $C$ is a deterministic constant independent from $N$ and $k$.

We now introduce the following rescaled corrector:
\begin{equation}
\label{def:w0pN}
w_{0,p}^N(x,\omega) = \frac{1}{N} w_p^N(Nx,\omega),
\end{equation}
where, we recall
$\dps
w_p^N(y,\omega) = \widetilde{w}_p^N(\phi^{-1}(y,\omega),\omega)$.
Using~\eqref{phi:cond2} and~\eqref{eq:utile}, we infer
from~\eqref{eq:flor3} that
$$
\| \nabla w_{0,p}^N \|_{L^2\left(\frac{1}{N}\phi\left(Q_{kN},\omega\right)\right)} 
\leq C k^{d/2}
$$
where $C$ is a deterministic constant independent from $N$ and $k$. We
now choose $k$ in the above bound equal to the integer $k(\omega)$
defined in Lemma~\ref{lem:inclusion}. We infer from the above bound
and~\eqref{prop:iii} that
\begin{equation}
\label{bornw0pN_bis}
\forall N \in \NN^\star, \quad
\| \nabla w_{0,p}^N \|_{L^2\left(\widetilde{Q}(\omega)\right)} 
\leq C(\omega).
\end{equation}

Recall that the solution $\widetilde{w}_p^N$ to~\eqref{PB:corrN-per} is
unique up to an additive constant. We now fix this constant by choosing
$\widetilde{w}_p^N$ such that $\dps \int_{N \widetilde{Q}(\omega)}
w^N_p(\cdot,\omega) = 0$, where the set $\widetilde{Q}(\omega)$ is
defined in Lemma~\ref{lem:inclusion}. In view of~\eqref{def:w0pN}, this
means that $\dps \int_{\widetilde{Q}(\omega)} w^N_{0,p}(\cdot,\omega) =
0$. Using~\eqref{bornw0pN_bis} and the Poincar\'e-Wirtinger inequality,
we deduce that there exists $C(\omega)$ such that
$$
\forall N \in \NN^\star, \quad 
\| w^N_{0,p}(\cdot,\omega) \|_{H^1\left(\widetilde{Q}(\omega)\right)} 
\leq C(\omega). 
$$
This implies that, almost surely, there exists 
$w^\infty_{0,p}(\cdot,\omega) \in H^1\left(\widetilde{Q}(\omega)\right)$
such that
\begin{equation}
\label{Jikov:cond1}
w_{0,p}^N(\cdot,\omega) \rightharpoonup w^\infty_{0,p}(\cdot,\omega) 
\quad \text{weakly in $H^1\left(\widetilde{Q}(\omega)\right)$},
\end{equation}
and, using the Rellich Theorem, that
\begin{equation}
\label{L2fort}
w_{0,p}^N(\cdot,\omega) \rightarrow w^\infty_{0,p}(\cdot,\omega) 
\quad \text{strongly in $L^2\left(\widetilde{Q}(\omega)\right)$}.
\end{equation}

\bigskip

\noindent
\textbf{Step 2: $w_{0,p}^\infty$ is $\alpha Q$-periodic}

We infer from~\eqref{def:w0pN} that
$$
w_{0,p}^N\left(\frac{\phi(Ny,\omega)}{N},\omega\right) 
= 
\frac{1}{N} w_p^N(\phi(Ny,\omega),\omega) 
= 
\frac{1}{N}\widetilde{w}_p^N(Ny,\omega).
$$
Since the function $\widetilde{w}_p^N$ is $Q_N$-periodic, we see that
the function 
$\dps y \mapsto w_{0,p}^N\left(\frac{\phi(Ny,\omega)}{N},\omega\right)$
is $Q$-periodic. Hence, for any $k \in \ZZ^d$, we have, almost surely,
\begin{multline}
\label{eq:bibi}
\int_Q \left[ w_{0,p}^\infty \left(\alpha y,\omega \right) - 
w_{0,p}^\infty\left(\alpha (y + k),\omega \right) \right]^2 dy 
\leq
2\int_Q \left[ w_{0,p}^\infty\left(\alpha y,\omega \right) - 
w_{0,p}^N\left( \frac{\phi(Ny,\omega)}{N},\omega \right)
\right]^2 dy 
\\
+2 \int_Q \left[ 
w_{0,p}^N\left(\frac{\phi(N(y+k),\omega)}{N},\omega \right) - 
w_{0,p}^\infty\left(\alpha (y + k),\omega \right)
\right]^2 dy. 
\end{multline}
We now show that both terms in the above right-hand side converge to 0
when $N \to \infty$. It is sufficient to consider the first term. Let us
fix $\eta > 0$. 

We observe that the first term in the above right-hand side satisfies
\begin{equation}
\label{eq:c0c1}
\int_Q \left[ 
w_{0,p}^\infty\left(\alpha y,\omega \right) 
- 
w_{0,p}^N\left( \frac{\phi(Ny,\omega)}{N},\omega \right) \right]^2 dy 
\leq 
2 \left[ C_0^N(\omega) + C_1^N(\omega) \right],
\end{equation}
where
\begin{eqnarray*}
C_0^N(\omega) 
&=& 
\int_Q \left[ w_{0,p}^\infty\left(\alpha y,\omega \right) - 
w_{0,p}^\infty\left( \frac{\phi(Ny,\omega)}{N},\omega \right) \right]^2 dy,
\\
C_1^N(\omega) 
&=& 
\int_Q \left[ w_{0,p}^\infty\left(\frac{\phi(Ny,\omega)}{N},\omega \right) -
w_{0,p}^N \left( \frac{\phi(Ny,\omega)}{N},\omega \right) \right]^2
dy.
\end{eqnarray*}
To show that $C_0^N(\omega)$ converges to 0, we use the fact that the
function $\dps \frac{\phi(Ny,\omega)}{N}$ converges to the function
$\alpha y$ in $L^\infty_{\rm loc}(\RR^d)$ almost surely (see~\cite[Lemme
2.1]{Blanc2006}), and a regularization argument. Since 
$w_{0,p}^\infty(\cdot,\omega) \in
H^1\left(\widetilde{Q}(\omega)\right)$, there exists 
$w^\infty_\eta(\cdot,\omega) \in C^\infty\left(\widetilde{Q}(\omega)\right)$
such that 
\begin{equation}
\label{eqw0eta}
\| w^\infty_\eta(\cdot,\omega) - w_{0,p}^\infty(\cdot,\omega)  
\|_{L^2\left(\widetilde{Q}(\omega)\right)} \leq \eta.
\end{equation}
We then write that
\begin{equation}
\label{eq:decompo_C0}
C_0^N(\omega) \leq C_{00}^N(\omega) + C_{01}^N(\omega) + C_{02}^N(\omega), 
\end{equation}
where
\begin{eqnarray*}
C_{00}^N(\omega) 
&=& 
\int_Q \left[ w_{0,p}^\infty \left(\alpha y,\omega \right) - 
w_\eta^\infty \left(\alpha y,\omega \right) \right]^2 dy, 
\\
C_{01}^N(\omega) 
&=& 
\int_Q \left[ w_\eta^\infty\left(\alpha y,\omega \right) - 
w_\eta^\infty \left( \frac{\phi(Ny,\omega)}{N},\omega \right) 
\right]^2 dy, 
\\
C_{02}^N(\omega) 
&=& 
\int_Q \left[ 
w_\eta^\infty\left( \frac{\phi(Ny,\omega)}{N},\omega \right)  
-
w_{0,p}^\infty\left( \frac{\phi(Ny,\omega)}{N},\omega \right) 
\right]^2 dy.
\end{eqnarray*}
We infer from~\eqref{eqw0eta} and~\eqref{prop:ii} that 
\begin{equation}
\label{eqC00}
\forall N \in \NN^\star, \quad
C_{00}^N(\omega) 
=
\frac{1}{\sqrt{\det \alpha}}
\| w_{0,p}^\infty - w_\eta^\infty \|_{L^2(\alpha Q)}
\leq 
\frac{1}{\sqrt{\det \alpha}} \eta. 
\end{equation}
Likewise, we infer from~\eqref{eqw0eta},~\eqref{prop:i}
and~\eqref{phi:cond1} that
\begin{equation}
\label{eqC02}
\forall N \in \NN^\star, \quad
C_{02}^N(\omega) 
\leq
\frac{1}{\sqrt{\nu}}
\| w_\eta^\infty - w_{0,p}^\infty 
\|_{L^2\left(\frac{1}{N}\phi(Q_N,\omega)\right)}
\leq 
\frac{1}{\sqrt{\nu}} \eta.
\end{equation}
We now turn to $C_{01}^N(\omega)$. 
Using the fact that $w_\eta^\infty(\cdot,\omega) \in 
C^\infty\left(\widetilde{Q}(\omega)\right)$ and that the function $\dps
\frac{\phi(Ny,\omega)}{N}$ converges to the function $\alpha y$ in
$L^\infty_{\rm loc}(\RR^d)$ almost surely, we obtain that
$C_{01}^N(\omega)$ converges to zero as $N$ goes to infinity, almost
surely. We thus can choose $N(\eta,\omega) \in \NN$ such that
\begin{equation}
\label{eqC01}
\forall N \geq N(\eta,\omega), \quad
C_{01}^N(\omega) \leq \eta.
\end{equation}
Collecting~\eqref{eq:decompo_C0},~\eqref{eqC00},~\eqref{eqC01}
and~\eqref{eqC02}, we conclude that
\begin{equation}
\label{eq:c0N}
C_0^N(\omega) \to 0 \text{ as $N$ goes to infinity, almost surely.}
\end{equation}
We next turn to $C_1^N(\omega)$, which
is non-negative by definition, and satisfies,
using~\eqref{prop:i} and~\eqref{L2fort},
\begin{equation}
\label{eq:c1N}
C_1^N(\omega) 
= 
\int_{\widetilde{Q}(\omega)}
\mathbf{1}_{\frac{\phi(Q_N,\omega)}{N}} \frac{1}{\text{det}
\left(\nabla \phi\right)} \left[ 
w_{0,p}^\infty\left(y,\omega \right) 
-
w_{0,p}^N\left( y,\omega \right) \right]^2 dy 
\leq 
\frac{1}{\nu} 
\| w_{0,p}^\infty(\cdot,\omega)
-
w_{0,p}^N(\cdot,\omega) \|^2_{L^2\left(\widetilde{Q}(\omega)\right)} 
\rightarrow 0
\quad \text{as $N\to \infty$}. 
\end{equation}
Collecting~\eqref{eq:bibi},~\eqref{eq:c0c1},~\eqref{eq:c0N}
and~\eqref{eq:c1N}, we deduce that, almost surely,
$$
\forall k \in \ZZ^d, \quad
\int_Q \left[ w_{0,p}^\infty\left(\alpha y,\omega \right) - 
w_{0,p}^\infty\left(\alpha (y + k),\omega \right) \right]^2 dy 
= 0.
$$
The function $w_{0,p}^\infty$ is thus $\alpha Q$-periodic.

\bigskip

\noindent
\textbf{Step 3: $w_{0,p}^\infty$ solves 
$-\hbox{div}\left[B \nabla w_{0,p}^\infty \right] = 0$ in ${\cal
  D}'(\RR^d)$ where $B$ is a constant deterministic matrix}

In the two above steps, we closely followed the proof strategy
of~\cite{bourgeat_ihp} recalled in Section~\ref{sec:bourgeat}. This Step
3 follows a slightly different pattern, and is more involved than the
corresponding argument in~\cite{bourgeat_ihp}, which consists in showing
the weak formulation~\eqref{eq:rescale_lim}. As pointed out above, the
difficulty comes from identifying an appropriate domain, independent of $N$,
on which to apply Theorem~\ref{Jik-conv}. To circumvent this difficulty,
we work on the entire space $\RR^d$, with test functions of compact
support. 

Introduce a test function $\psi \in \mathcal{D}(\RR^d)$, and define the
$Q_N$-periodic function 
$$
\psi_N(y) := \sum\limits_{k \in \ZZ^d} \psi\left(\frac{1}{N}y - k\right).
$$
We note that, for any $y \in Q_N$, only a finite number of terms in the
above sum do not vanish, and that this number of terms only depends on
the support of $\psi$ and thus is independent of $N$. 

Choosing $\psi_N$ as test function in~\eqref{PB:corrN-per}, we write
$$
\int_{Q_N} \det (\nabla \phi(y,\omega)) 
\left(\sum\limits_{k \in \ZZ^d} 
\nabla \psi\left(\frac{1}{N}y - k\right) \right)^T 
\left(\nabla \phi(y,\omega)\right)^{-1} A_{\rm per}(y)
\left(p + \left(\nabla \phi\right)^{-T}(y,\omega) 
\nabla \widetilde{w}^N_p(y,\omega)\right) \, dy=0.
$$
After the change of variable $z=\phi(y,\omega)$, we obtain
$$
\sum\limits_{k \in \ZZ^d} 
\int_{\phi(Q_N,\omega)} 
\left( \nabla \psi\left(\frac{1}{N}\phi^{-1}(z,\omega) - k\right)
\right)^T 
\left(\nabla \phi(\phi^{-1}(z,\omega),\omega)\right)^{-1}
A_{\rm per}(\phi^{-1}(z,\omega))
\left(p + \nabla w^N_p(z,\omega)\right) \, dz=0,
$$
that we recast, using the definition~\eqref{def:w0pN} of $w^N_{0,p}$, as
\begin{equation}
\label{tentative:0}
\forall N \in \NN^\star, \quad
\sum\limits_{k \in \ZZ^d} I^N_k(\omega) =0
\quad \text{a.s.},
\end{equation}
where
$$
I^N_k(\omega) = \int_{\frac{1}{N}\phi(Q_N,\omega)}
\left( \nabla \psi\left(\frac{1}{N}\phi^{-1}(N z,\omega) - k\right)
\right)^T 
\left(\nabla \phi(\phi^{-1}(N z,\omega),\omega)\right)^{-1}
A_{\rm per}(\phi^{-1}(N z,\omega))
\left(p + \nabla w^N_{0,p}(z,\omega)\right) \, dz.
$$
We claim that
\begin{equation}
\label{eq:conv_Ik}
\forall k \in \ZZ^d, \quad
\lim_{N \to \infty} I^N_k(\omega) = I^\infty_k(\omega)
\quad \text{a.s.},
\end{equation}
where 
$$
I^\infty_k(\omega) := \int_{\alpha Q} 
\left( \nabla \psi\left(\alpha^{-1} z - k\right)\right)^T \beta \, A^\star
\left(p + \nabla w^\infty_{0,p}(z,\omega)\right) \, (\det \alpha)^{-1} \, dz,
$$
where the constant matrices $\alpha$ and $\beta$ are defined
by~\eqref{eq:def_alpha} and~\eqref{eq:def_beta}.

\medskip

Assume momentarily that~\eqref{eq:conv_Ik} indeed holds. Then, as the
sum in~\eqref{tentative:0} has a finite number of terms, independently
of $N$, we can pass to the limit $N \to \infty$ and obtain that
$$
\sum\limits_{k \in \ZZ^d} I^\infty_k(\omega) =0
\quad \text{a.s.},
$$
which also reads
$$
\sum\limits_{k \in \ZZ^d} \int_{\alpha Q} 
\left( \nabla \psi\left(\alpha^{-1} z - k\right) \right)^T \beta \, A^\star
\left(p + \nabla w^\infty_{0,p}(z,\omega)\right) (\det \alpha)^{-1} dz = 0.
$$
Using the $\alpha Q$-periodicity of the function $w_{0,p}^\infty$ (shown
in the above Step 2), we deduce that
$$
\int_{\RR^d} \left( \nabla \psi(z) \right)^T \beta \, A^\star
\left(p + \nabla w^\infty_{0,p}(\alpha z,\omega) \right) dz = 0
$$
for all test functions $\psi \in \mathcal{D}(\RR^d)$. We indeed have
shown that
\begin{equation}
\label{eq:conc_step3}
-\mbox{div}\left[ \beta \, A^\star 
\nabla w^\infty_{0,p}(\alpha \cdot,\omega) \right] = 0 
\mbox{ in $\mathcal{D}'(\RR^d)$}. 
\end{equation}

\medskip

To conclude this Step, we are hence left with showing~\eqref{eq:conv_Ik}.
Formally, this comes from the strong $L^1(\RR^d)$ convergence of the
indicator function $\mathbf{1}_{\frac{1}{N}\phi(Q_N,\omega)}$ towards
$\mathbf{1}_{\alpha Q}$ 
and from the div-curl lemma. We indeed observe
that the integrand in $I_k^N(\omega)$ is the product of
$\dps \left(\nabla \phi(\phi^{-1}(N z,\omega),\omega)\right)^{-T}
\nabla \psi\left(\frac{1}{N}\phi^{-1}(N z,\omega) - k\right)$ 
with
$\dps A_{\rm per}(\phi^{-1}(N z,\omega)) 
\left(p + \nabla w^N_{0,p}(z,\omega)\right)$. We will show in the sequel
that the first factor is curl-free, whereas the second factor is
divergence free.
Using the div-curl lemma, this product
converges (at least in the sense of distributions) towards the product
of the weak limits of the two factors, which can be identified. 
One difficulty to make this argument rigorous is to find a fixed domain
(independent of $N$) on which 
to apply the div-curl lemma. For that purpose, Lemma~\ref{theo:phiQN}
is useful. 

We now proceed in details.
Let $\eta > 0$, and let ${\cal O}_\eta \subset \widetilde{\cal O}_\eta$
be two deterministic open
sets such that $\overline{\widetilde{\cal O}_\eta}$ is proper subset of
$\alpha Q$, $\overline{{\cal O}_\eta}$ is a proper subset of
$\widetilde{\cal O}_\eta$, and 
\begin{equation}
\label{eq:approx_set}
\left| \alpha Q \setminus \widetilde{\cal O}_\eta \right| \leq \eta,
\qquad
\left| \widetilde{\cal O}_\eta \setminus {\cal O}_\eta \right| \leq \eta.
\end{equation}
We then decompose $I^N_k(\omega)$ and $I^\infty_k(\omega)$ as follows:
using~\eqref{prop:i} and~\eqref{prop:ii}, we write
\begin{equation}
\label{eq:decompo1}
I^N_k(\omega) = I^N_{k,\eta}(\omega) + \mathcal{R}_{k,\eta}^N(\omega),
\qquad
I^\infty_k(\omega) = 
I^\infty_{k,\eta}(\omega) + \mathcal{R}_{k,\eta}^\infty(\omega),
\end{equation}
with
\begin{eqnarray*}
I^N_{k,\eta}(\omega) 
&=& 
\int_{{\cal O}_\eta}\left( \nabla 
\psi\left(\frac{1}{N}\phi^{-1}(N z,\omega) - k\right) \right)^T 
\left(\nabla \phi(\phi^{-1}(N z,\omega),\omega)\right)^{-1}
A_{\rm per}\left(\phi^{-1}(N z,\omega)\right)
\left(p + \nabla w^N_{0,p}(z,\omega)\right) \, dz, 
\\
I^\infty_{k,\eta}(\omega)  
&=& 
\int_{{\cal O}_\eta} \left( \nabla \psi\left(\alpha^{-1} z - k\right)\right)^T 
\beta \,
A^\star \left(p + \nabla w^\infty_{0,p}(z,\omega) \right) \, 
(\det \alpha)^{-1} \, dz, 
\\
\mathcal{R}_{k,\eta}^N(\omega) 
&=& 
\int_{\widetilde{Q}(\omega)} 
\left(\mathbf{1}_{\frac{1}{N}\phi(Q_N,\omega)}(z) - 
\mathbf{1}_{{\cal O}_\eta}(z)\right)
\left( \nabla \psi\left(\frac{1}{N}\phi^{-1}(N z,\omega) - k\right)
\right)^T 
\\
&& \quad\quad\quad\quad \times
\left(\nabla \phi(\phi^{-1}(N z,\omega),\omega)\right)^{-1}
A_{\rm per}\left(\phi^{-1}(N z,\omega)\right)
\left(p + \nabla w^N_{0,p}(z,\omega)\right) \, dz, 
\\
\mathcal{R}^\infty_{k,\eta}(\omega) 
&=& 
\int_{\alpha Q \setminus {\cal O}_\eta} \left(\nabla \psi(\alpha^{-1} z -
  k) \right)^T \beta \, A^\star (p + \nabla w_{0,p}^\infty(z,\omega))
\ (\det \alpha)^{-1} \, dz.
\end{eqnarray*}
To use the div-curl lemma, we need to further decompose $I^N_{k,\eta}(\omega)$
and $I^\infty_{k,\eta}(\omega)$. Introducing a smooth truncation
function $\xi \in \mathcal{D}\left(\widetilde{\cal O}_\eta\right)$ such that $0 \leq
\xi(x) \leq 1$ a.e. and $\xi \equiv 1$ on ${\cal O}_\eta$, we write that
\begin{equation}
\label{def:tildrest}
I^N_{k,\eta}(\omega) = \widetilde{I}^N_{k,\eta}(\omega) - C^N_{\eta}(\omega), 
\qquad
I^\infty_{k,\eta}(\omega) = 
\widetilde{I}^\infty_{k,\eta}(\omega) - C^\infty_{\eta}(\omega),
\end{equation}
where 
\begin{eqnarray*} 
C^N_{\eta}(\omega) 
&=& 
\int_{\widetilde{\cal O}_\eta \setminus {\cal O}_\eta} \xi(z) 
\left( \nabla \psi\left(\frac{1}{N}\phi^{-1}(N z,\omega) - k\right) \right)^T
\left(\nabla \phi(\phi^{-1}(N z,\omega),\omega)\right)^{-1} 
A_{\rm per}\left(\phi^{-1}(N z,\omega)\right)
\left(p + \nabla w^N_{0,p}(z,\omega) \right) dz, 
\\
C^\infty_{\eta}(\omega) 
&=& 
\int_{\widetilde{\cal O}_\eta \setminus {\cal O}_\eta} \xi(z) 
\left( \nabla \psi\left(\alpha^{-1} z - k\right) \right)^T \beta \, A^\star
\left(p + \nabla w^\infty_{0,p}(z,\omega)\right) \, (\det \alpha)^{-1} \, dz,
\\
\widetilde{I}^N_{k,\eta}(\omega) 
&=& 
\int_{\widetilde{\cal O}_\eta} \xi(z) 
\left( \nabla \psi\left(\frac{1}{N}\phi^{-1}(N z,\omega) - k\right) 
\right)^T \left(\nabla \phi(\phi^{-1}(N z,\omega),\omega)\right)^{-1}
A_{\rm per}\left(\phi^{-1}(N z,\omega)\right)
\left(p + \nabla w^N_{0,p}(z,\omega) \right) \, dz,
\\
\widetilde{I}^\infty_{k,\eta}(\omega) 
&=& 
\int_{\widetilde{\cal O}_\eta} \xi(z) 
\left( \nabla \psi\left(\alpha^{-1} z - k\right) \right)^T \beta \, A^\star 
\left(p + \nabla w^\infty_{0,p}(z,\omega) \right) \, (\det \alpha)^{-1} \, dz.
\end{eqnarray*}

We first bound from above $C^N_{\eta}(\omega)$, $C^\infty_{\eta}(\omega)$, 
$\mathcal{R}^N_{k,\eta}(\omega)$ and 
$\mathcal{R}^\infty_{k,\eta}(\omega)$.
As $|\xi| \leq 1$, we see that
\begin{eqnarray*}
\left| C^N_{\eta}(\omega) \right|
&\leq& 
\| \nabla \psi \|_{L^\infty} \, \| \nabla \phi^{-1}(\cdot,\omega) \|_{L^\infty}
\, \| A_{\rm per} \|_{L^\infty} \ 
\left| \widetilde{\cal O}_\eta \setminus {\cal O}_\eta \right|^{1/2} 
\ \| p + \nabla w^N_{0,p}(\cdot,\omega) 
\|_{L^2\left(\widetilde{\cal O}_\eta \setminus {\cal O}_\eta\right)}
\\
&\leq& 
C \sqrt{\eta}
\ \| p + \nabla w^N_{0,p}(\cdot,\omega) 
\|_{L^2\left(\widetilde{Q}(\omega)\right)}
\end{eqnarray*}
where, in the second line, we have
used~\eqref{eq:utile},~\eqref{eq:approx_set} and the fact
that $\widetilde{\cal O}_\eta \setminus {\cal O}_\eta \subset
\widetilde{\cal O}_\eta 
\subset \alpha Q \subset \widetilde{Q}(\omega)$
(see~\eqref{prop:ii}). Now
using~\eqref{bornw0pN_bis}, we deduce that there exists $C(\omega)$,
independent of $\eta$ and $N$, such that
\begin{equation}
\label{eq:flor4}
\forall N \in \NN^\star, \quad
\left| C^N_{\eta}(\omega) \right|
\leq C(\omega) \sqrt{\eta}.
\end{equation}
We likewise obtain that
\begin{equation}
\label{tentative:3}
\left| C^\infty_{\eta}(\omega) \right|
\leq C(\omega) \sqrt{\eta}
\quad \text{and} \quad
\left| \mathcal{R}^\infty_{k,\eta}(\omega) \right| 
\leq C(\omega) \sqrt{\eta}.
\end{equation}
Now turning to $\mathcal{R}^N_{k,\eta}(\omega)$, we obtain, using
similar arguments, that
$$
\left| \mathcal{R}^N_{k,\eta}(\omega) \right|
\leq
C \left\| \mathbf{1}_{\frac{1}{N}\phi(Q_N,\omega)} - 
\mathbf{1}_{{\cal O}_\eta} \right\|_{L^2\left(\widetilde{Q}(\omega)\right)}
\left\| p + \nabla w^N_{0,p}(\cdot,\omega) 
\right\|_{L^2\left(\widetilde{Q}(\omega)\right)}.
$$
Using~\eqref{bornw0pN_bis}, a triangle inequality
and~\eqref{eq:approx_set}, we deduce that
\begin{equation}
\label{tentative:2}
\left| \mathcal{R}^N_{k,\eta}(\omega) \right|
\leq
C(\omega) \left( 
\left\| \mathbf{1}_{\frac{1}{N}\phi(Q_N,\omega)} - 
\mathbf{1}_{\alpha Q} \right\|_{L^2\left(\widetilde{Q}(\omega)\right)}
+
\sqrt{\eta}
\right).
\end{equation}
Recall now that, in view of~\eqref{eq:reecriture_cras2}, we have
\begin{equation}
\label{eq:2bis}
\lim_{N \to \infty}
\left\| \mathbf{1}_{\frac{1}{N}\phi(Q_N,\omega)} - 
\mathbf{1}_{\alpha Q} \right\|_{L^2\left(\widetilde{Q}(\omega)\right)}
= 0 \ \ \text{a.s.}
\end{equation}
We eventually estimate 
$\widetilde{I}^N_{k,\eta}(\omega) -
\widetilde{I}^\infty_{k,\eta}(\omega)$ using the div-curl lemma.
The compact $\overline{\widetilde{\cal O}_\eta}$ being a proper subset of
$\alpha Q$, we infer from Lemma~\ref{theo:phiQN} that there exists
$N_0(\omega)$ such that for all $N \geq N_0(\omega)$, 
$\dps \widetilde{\cal O}_\eta \subset \frac{1}{N}\phi(Q_N,\omega)$. We then
deduce from~\eqref{PB:corrN-per} that, for any $N \geq N_0(\omega)$,
\begin{equation}
\label{div-curl:1}
-\hbox{div}\left[A_{\rm per}\left(\phi^{-1}(N z,\omega)\right)
\left(p + \nabla w^N_{0,p}(z,\omega)\right)\right]=0 \quad 
\text{ in $\mathcal{D}'\left(\widetilde{\cal O}_\eta\right)$.}
\end{equation}
Using~\eqref{Jikov:cond1}, we can thus apply Theorem~\ref{Jik-conv} on
the domain $\widetilde{\cal O}_\eta$, and obtain that
\begin{equation}
\label{div-curl:2}
A_{\rm per}\left(\phi^{-1}(N z,\omega)\right)
\left(p + \nabla w^N_{0,p}(z,\omega)\right) 
\rightharpoonup 
A^\star \left(p + \nabla w^\infty_{0,p}(z,\omega)\right) 
\quad \text{weakly in $L^2\left(\widetilde{\cal O}_\eta\right)$}.
\end{equation}
From the proof of~\cite[Lemme 2.2]{Blanc2006}, we know that 
$\dps \frac{1}{N}\phi^{-1}(N z,\omega)$ strongly converges in
$L^\infty_{\rm loc}(\RR^d)$ toward $\alpha^{-1} z$. 
As, by definition,
$\nabla \psi \in \mathcal{C}^\infty(\RR^d)$, we obtain that 
\begin{equation}
\label{eq:flor5}
\nabla \psi\left(\frac{1}{N}\phi^{-1}(N z,\omega) - k\right) 
\to 
\nabla \psi\left(\alpha^{-1}z - k\right) 
\quad \text{strongly in $L_{\rm loc}^\infty(\RR^d)$}.
\end{equation}
Since $(\nabla \phi)^{-1}$ is stationary, we infer from \cite[Lemme
2.2]{Blanc2006} that
\begin{equation}
\label{eq:flor6}
\left(\nabla \phi(\phi^{-1}(N z,\omega),\omega)\right)^{-1} 
\rightharpoonup 
(\det \alpha)^{-1} \, \beta \quad \text{weakly-$\star$ in $L^\infty(\RR^d)$},
\end{equation}
where the matrix $\beta$ is defined by~\eqref{eq:def_beta}.
As $\widetilde{\cal O}_\eta$ is a bounded open set of $\RR^d$, we deduce
from~\eqref{eq:flor5} and~\eqref{eq:flor6} that 
\begin{equation}
\label{div-curl:3}
\left(\nabla \psi\left(\frac{1}{N}\phi^{-1}(N z,\omega) - k\right) \right)^T 
\left(\nabla \phi\left(\phi^{-1}(N z,\omega),\omega\right)\right)^{-1} 
\rightharpoonup 
(\det \alpha)^{-1} \left( \nabla \psi\left(\alpha^{-1}z - k\right)
\right)^T \beta
\quad \text{weakly in $L^2\left(\widetilde{\cal O}_\eta\right)$.}
\end{equation}
We eventually note that
\begin{equation}
\label{eq:flor7}
\left( \nabla \phi(\phi^{-1}(N z,\omega),\omega) \right)^{-T} \
\nabla \psi\left(\frac{1}{N}\phi^{-1}(N z,\omega) - k\right)
\text{ is curl-free},
\end{equation}
as this vector is the gradient of $\dps
\psi\left(\frac{1}{N}\phi^{-1}(N z,\omega) - k\right)$.
Collecting~\eqref{div-curl:1},~\eqref{div-curl:2},~\eqref{div-curl:3}
and~\eqref{eq:flor7}, we are in position to apply the div-curl lemma
(see for instance~\cite[Lemma 1.1 p. 4]{jikov}). We thus obtain that
\begin{equation}
\label{conv0}
\forall \eta, \quad
\lim_{N \to \infty} \widetilde{I}^N_{k,\eta}(\omega) 
= 
\widetilde{I}^\infty_{k,\eta}(\omega) 
\quad \text{a.s.}
\end{equation}
Collecting~\eqref{eq:decompo1},~\eqref{def:tildrest},~\eqref{eq:flor4},~\eqref{tentative:3},~\eqref{tentative:2},~\eqref{eq:2bis}
and~\eqref{conv0}, we deduce the claim~\eqref{eq:conv_Ik}. This
concludes this Step. 

\bigskip

\noindent
\textbf{Step 4: Conclusion}

Collecting the conclusion of Step 2 and~\eqref{eq:conc_step3}, we have
shown that the function $w^\infty_{0,p}(\cdot,\omega)$ solves the problem
$$
-\mbox{div}\left[ \beta \, A^\star 
\nabla w^\infty_{0,p}(\alpha \cdot,\omega) \right] = 0 
\mbox{ in $\mathcal{D}'(\RR^d)$}, 
\qquad
w^\infty_{0,p}\left(\alpha \cdot,\omega \right) \mbox{ is $Q$-periodic}.
$$
The function $g(x,\omega) = w^\infty_{0,p}\left(\alpha x,\omega \right)$
is thus $Q$-periodic and satisfies 
$$
-\mbox{div}\left[ \beta \, A^\star \alpha^{-T} 
\nabla g(\cdot,\omega) \right] = 0 
\mbox{ in $\mathcal{D}'(\RR^d)$}.
$$
We know from Lemma~\ref{lem:j_ball} that the matrix $\beta \, A^\star
\alpha^{-T}$ is coercive. The above equation has thus a unique solution
(up to the addition of a random constant), hence $\nabla g \equiv 0$,
which implies that $\nabla w^\infty_{0,p} \equiv 0$. 
We thus deduce from~\eqref{div-curl:2} that
\begin{equation}
\label{eq:1bis}
A_{\rm per}\left(\phi^{-1}(N\cdot,\omega)\right) 
\left(p + \nabla w^N_{0,p}(\cdot,\omega) \right) 
\rightharpoonup 
A^\star p 
\quad \text{as $N \to \infty$, weakly in $L^2\left({\cal O}_\eta\right)$.}
\end{equation}
We are now in position to prove the convergence of the approximation
described in Section~\ref{sec:pres_approx}. We infer from~\eqref{eq:-1}
that 
\begin{equation}
\label{eq:0bis}
\left[ B^\star_N \right]_{ij}(\omega) 
= 
R^N_{1,\eta}(\omega) + R^N_{2,\eta}(\omega),
\end{equation}
with
\begin{eqnarray*}
R^N_{1,\eta}(\omega) 
&=& 
\int_{\widetilde{Q}(\omega)} 
\left(\mathbf{1}_{\frac{1}{N}\phi(Q_N,\omega)}(x) -
  \mathbf{1}_{{\cal O}_\eta}(x)\right) 
e_i^T A_{\rm per}\left(\phi^{-1}(Nx,\omega)\right) 
\left(e_j + \nabla w^N_{0,e_j}(x,\omega)\right) \, dx, 
\\ 
R^N_{2,\eta}(\omega) 
&=& 
\int_{{\cal O}_\eta} e_i^T A_{\rm per}\left(\phi^{-1}(Nx,\omega)\right) 
\left(e_j + \nabla w^N_{0,e_j}(x,\omega)\right) \, dx,
\end{eqnarray*}
where we have used that ${\cal O}_\eta \subset \alpha Q$,~\eqref{prop:i}
and~\eqref{prop:ii}. We deduce from~\eqref{eq:1bis} that
$$
\forall \eta, \quad
\lim_{N \to \infty} R^N_{2,\eta}(\omega)
=
\left| {\cal O}_\eta \right| \left[ A^\star \right]_{ij}
\quad \text{a.s.},
$$
hence, in view of~\eqref{eq:approx_set},
\begin{equation}
\label{eq:flor8}
\lim_{\eta \to 0}
\lim_{N \to \infty} R^N_{2,\eta}(\omega)
=
\left| \alpha Q \right| \left[ A^\star \right]_{ij}
\quad \text{a.s.}
\end{equation}
Turning to $R^N_{1,\eta}(\omega)$, we deduce from~\eqref{bornw0pN_bis}
and~\eqref{eq:approx_set} that
$$
\left| R^N_{1,\eta}(\omega) \right| 
\leq 
C(\omega) 
\left\| \mathbf{1}_{\frac{1}{N}\phi(Q_N,\omega)} - 
\mathbf{1}_{{\cal O}_\eta} \right\|_{L^2\left(\widetilde{Q}(\omega)\right)}
\leq
C(\omega) 
\left( 
\left\| \mathbf{1}_{\frac{1}{N}\phi(Q_N,\omega)} - 
\mathbf{1}_{\alpha Q} \right\|_{L^2\left(\widetilde{Q}(\omega)\right)}
+
\sqrt{\eta}
\right),
$$
hence, in view of~\eqref{eq:2bis},
\begin{equation}
\label{eq:flor9}
\lim_{\eta \to 0}
\lim_{N \to \infty} R^N_{1,\eta}(\omega)
=
0 \quad \text{a.s.}
\end{equation}
Collecting~\eqref{eq:0bis},~\eqref{eq:flor8} and~\eqref{eq:flor9}, we obtain
$$
\lim_{N \to \infty} \left[ B^\star_N \right]_{ij}(\omega)
=
\left| \alpha Q \right| \, A^\star_{ij}
\quad \text{a.s.}
$$
We then deduce from~\eqref{def:AstarN} the claimed convergence. This
concludes the proof of Theorem~\ref{conv:AN}.

\subsection{Proof of Lemma~\ref{lem:j_ball}}
\label{sec:proof_j_ball}

We first show that
\begin{equation}
\label{eq:ball1} 
\text{the homogenized matrix $A^\star$ defined by~\eqref{def:astar} is
  coercive.} 
\end{equation}
For any $p \in \RR^d$, we indeed have 
\begin{eqnarray*}
p^T A^\star p 
&=&
(\det \alpha)^{-1} \ \EE \left(\int_{\phi(Q,\cdot)} p^T 
A_{\rm per}\left(\phi^{-1}\left(y,\cdot\right)\right) 
\left( p + \nabla w_p(y,\cdot) \right) dy \right)
\\
&=&
(\det \alpha)^{-1} \ \EE \left(\int_{\phi(Q,\cdot)} 
\left(p + \nabla w_p(y,\cdot) \right)^T 
A_{\rm per}\left(\phi^{-1}\left(y,\cdot\right)\right) 
\left( p + \nabla w_p(y,\cdot) \right) dy \right)
\end{eqnarray*}
where the last line is obtained using the arguments presented in the
existence proof of~\cite[Théorème 1.2]{Blanc2006}. The matrix $A_{\rm
  per}$ being coercive (see~\eqref{eq:hyp_a}), we deduce that there
exists $C>0$ such that, for any $p \in \RR^d$,
\begin{eqnarray*}
p^T A^\star p 
& \geq &
a_- (\det \alpha)^{-1} \ \EE \left(\int_{\phi(Q,\cdot)} 
\left(p + \nabla w_p(y,\cdot) \right)^T 
\left( p + \nabla w_p(y,\cdot) \right) dy \right)
\\
& \geq &
C \ \left[ \EE \left(\int_{\phi(Q,\cdot)} p + \nabla w_p(y,\cdot)
  \right) \right]^2 
\quad \text{(Cauchy-Schwarz inequality)}
\\
& \geq &
C \ p^T p \quad \text{(using third line of~\eqref{PB:correc})}.
\end{eqnarray*}
This proves~\eqref{eq:ball1}.

\medskip

We now claim that the matrix $\beta$ defined by~\eqref{eq:def_beta} satisfies
\begin{equation}
\label{eq:ball2} 
\beta = \det \alpha \ \alpha^{-1}. 
\end{equation}
This is obvious in dimension $d=1$, and also in dimension $d=2$, using
the explicit formula of the inverse of a $2 \times 2$ matrix. In
dimension $d \geq 3$, we observe that $\dps \beta = \EE \left[ \int_Q
  {\rm adj} \, \nabla \phi \right]$, where ${\rm adj} \, \nabla \phi$ is
the adjugate 
matrix (i.e. the transpose of the matrix of cofactors) of $\nabla
\phi$. The matrix $\nabla \phi$ being stationary, we deduce
from~\cite[Corollary~1]{henao} and~\eqref{eq:signe} (see
also~\cite[Corollary 6.2.2]{ball1977} for the specific case $d=3$) that 
$\dps \EE \left[ \int_Q {\rm adj} \, \nabla \phi \right] 
= 
{\rm adj} \ \EE \left[ \int_Q \nabla \phi \right]
$, from which we readily infer~\eqref{eq:ball2}. 

\medskip

We are now in position to prove Lemma~\ref{lem:j_ball}.
Using~\eqref{eq:ball2} and~\eqref{eq:ball1}, we indeed see
that there exists $C>0$ such that, for any $p \in \RR^d$, we have 
$$
p^T \beta A^\star \alpha^{-T} p 
=
\det \alpha \ p^T \alpha^{-1} A^\star \alpha^{-T} p 
\geq
C p^T \alpha^{-1} \alpha^{-T} p.
$$
Since $\det \alpha > 0$, we see that the matrix $\alpha^{-1} \alpha^{-T}$
is symmetrix positive definite, which concludes the proof of Lemma~\ref{lem:j_ball}. 

\bigskip

\noindent
\textbf{Acknowledgements:} We thank Claude Le Bris and Xavier
Blanc for stimulating discussions and useful comments on a preliminary
version of this article. We are thankful to Xavier Blanc for pointing
out the reference~\cite{henao}.
This work is partially supported by ONR under
Grant N00014-12-1-0383 and by EOARD under Grant FA8655-10-C-4002.

\end{document}